\flushbottom \setlength{\parindent}{16pt}
\let\wfs@comment@comment\comment
\let\comment\@undefined
\let\wfs@changes@comment\comment
\let\comment\@undefined
\newcommand\comment{%
    \ifthenelse{\equal{\@currenvir}{comment}}
    {\wfs@comment@comment}
    {\wfs@changes@comment}%
}
\title{New MRD codes from linear cutting blocking sets}
\author[1]{Daniele Bartoli}
\affil[1]{Department of Mathematics and Informatics, University of Perugia, Perugia,  Italy, {\small \texttt{daniele.bartoli@unipg.it}}\vspace*{.3cm}}
\author[2]{Giuseppe Marino}
\affil[2]{Department of Mathematics and Applications ``R. Caccioppoli'', University of Naples ``Federico II'', Napoli, Italy, {\small\texttt{giuseppe.marino@unina.it}}\vspace*{.3cm}}
\author[3]{Alessandro Neri}
\affil[3]{Max-Planck-Institute for Mathematics in the Sciences, Leipzig, Germany, \small{\texttt{alessandro.neri@mis.mpg.de}}}
\date{}
\DeclareMathOperator{\dd}{d}
\DeclareMathOperator{\rk}{rk}
\begin{document}
\maketitle

%2. Format for theorems and similar.
\theoremstyle{definition}
\newtheorem{theorem}{Theorem}[section]
\newtheorem{lemma}[theorem]{Lemma}
\newtheorem{conj}[theorem]{Conjecture}
\newtheorem{remark}[theorem]{Remark}
\newtheorem{cor}[theorem]{Corollary}
\newtheorem{prop}[theorem]{Proposition}
\newtheorem{defin}[theorem]{Definition}
\newtheorem{result}[theorem]{Result}
\newtheorem{property}[theorem]{Property}
\newtheorem{question}[theorem]{Question}

\makeatother
\newcommand{\Prf}{\noindent{\bf Proof}.\quad }
\renewcommand{\labelenumi}{(\alph{enumi})}

%6. Contents of the headers

\def\B{\mathbf B}
\def\C{\mathbf C}
\def\Z{\mathbf Z}
\def\Q{\mathbf Q}
\def\W{\mathbf W}
\def\a{\mathbf a}
\def\b{\mathbf b}
\def\c{\mathbf c}
\def\d{\mathbf d}
\def\e{\mathbf e}
\def\l{\mathbf l}
\def\v{\mathbf v}
\def\w{\mathbf w}
\def\x{\mathbf x}
\def\y{\mathbf y}
\def\z{\mathbf z}
\def\t{\mathbf t}
\def\cD{\mathcal D}
\def\cC{\mathcal C}
\def\cH{\mathcal H}
\def\cM{{\mathcal M}}
\def\cK{\mathcal K}
\def\cQ{\mathcal Q}
\def\cU{\mathcal U}
\def\cS{\mathcal S}
\def\cT{\mathcal T}
\def\cR{\mathcal R}
\def\cN{\mathcal N}
\def\cA{\mathcal A}
\def\cF{\mathcal F}
\def\cL{\mathcal L}
\def\cP{\mathcal P}
\def\cG{\mathcal G}
\def\cGD{\mathcal GD}
\def\mC{\mathcal C}
\def\mU{\mathcal U}

\def\PG{{\rm PG}}
\def\GL{{\rm GL}}
\def\GF{{\rm GF}}
\def\Tr{{\rm Tr}}

\def\Pg{PG(5,q)}
\def\pg{PG(3,q^2)}
\def\ppg{PG(3,q)}
\def\HH{{\cal H}(2,q^2)}
\def\F{\mathbb F}
\def\Ft{\mathbb F_{q^t}}
\def\P{\mathbb P}
\def\V{\mathbb V}
\def\bS{\mathbb S}
\def\G{\mathbb G}
\def\E{\mathbb E}
\def\N{\mathbb N}
\def\K{\mathbb K}
\def\D{\mathbb D}
\def\ps@headings{
 \def\@oddhead{\footnotesize\rm\hfill\runningheadodd\hfill\thepage}
 \def\@evenhead{\footnotesize\rm\thepage\hfill\runningheadeven\hfill}
 \def\@oddfoot{}
 \def\@evenfoot{\@oddfoot}
}
\def\cub{\mathscr C}
\def\cO{\mathcal O}
\def\cur{\mathscr L}
\def\Fqm{{\mathbb F}_{q^m}}
\def\Fq3{{\mathbb F}_{q^3}}
\def\fq{{\mathbb F}_{q}}
\def\Fm{{\mathbb F}_{q^m}}

\newcommand{\Fmk}{[n,k]_{q^m/q}}
\newcommand{\Fmkd}{[n,k,d]_{q^m/q}}
\newcommand{\Fmkdd}{[n,k,(d_1,\ldots,d_k)]_{q^m/q}}
\newcommand{\Ukdd}{\mathfrak U(n,k,(d_1,\ldots,d_k))_{q^m/q}}
\newcommand{\Ckdd}{\mathfrak C(n,k,(d_1,\ldots,d_k))_{q^m/q}}

\newcommand{\ale}[1]{{\color{blue}[$\star \star$ {\sf Alessandro: #1}]}}

\newcommand{\giu}[1]{{\color{red}[$\star \star$ {\sf Giuseppe: #1}]}}

\newcommand{\Fms}[3]{[#1,#2]_{q^{#3}/q}}

\newcommand{\Gal}{\mathrm{Gal}}
\newcommand{\supp}{\mathrm{supp}}

\newcommand{\wt}{\mathrm{wt}}

\begin{abstract}
 Minimal rank-metric codes or, equivalently,  linear cutting blocking sets are characterized in terms of the second generalized rank weight, via their connection with  evasiveness properties of the associated $q$-system. Using this result, we provide the first construction of a family of $\F_{q^m}$-linear MRD codes of length $2m$ that are  not obtained as a direct sum of two smaller MRD codes. In addition, such a family has better parameters, since its codes possess generalized rank weights strictly larger   than those of the previously known MRD codes. This shows that not all the MRD codes have the same generalized rank weights, in contrast to what happens in the Hamming metric setting.
\end{abstract}

\bigskip

\par\noindent
{\bf Keywords:} MRD codes, cutting blocking sets, $q$-systems, evasive subspaces, strong blocking sets, generalized rank weights\\
{\bf Mathematics Subject Classification:} 94B05, 51E20, 94B27\\

\section{Introduction}

\textbf{Overview.} Codes in the rank metric have seen a rapid increase of interest in the last decades, due to their application to network coding. However, researchers have been fascinated by these codes not only for this application, but also for their intrinsic mathematical structure. Indeed, rank-metric codes have been studied in connection with many mathematical areas, such as finite semifields, linear sets in finite geometry, tensors, $q$-analogues in combinatorics. 

Rank-metric codes can be  seen either as vector subspaces of the space of $n \times m$ matrices over a (possibly finite) field, e.g. $\fq$, or as subspaces of vectors of length $n$ over a degree $m$ extension field, e.g. $\Fm$. In both cases, the most important parameters of a rank-metric code are given by $n,m$, its dimension and its minimum rank distance, that is the minimum rank of a nonzero element of the code. These parameters are related by a very elegant inequality, which is known as the Singleton-like bound. Codes attaining this bound with equality are called \emph{maximum rank distance (MRD) codes}, and they are considered to be optimal, due to their largest possible error-correction capability. 

Constructions of MRD codes are known for every parameters if we consider $\fq$-linear rank-metric codes. However, with the further requirement to be $\Fm$-linear, this is not anymore true. Indeed, constructions of $\Fm$-linear MRD codes are known when $n\leq m$ for any possible dimension and finite field size and they were  already proposed in the seminal papers by Delsarte \cite{delsarte1978bilinear} and Gabidulin \cite{gabidulin1985theory}. When $n>m$, the only known constructions concern $3$-dimensional MRD codes with $n=\frac{3m}{2}$, when $m$ is even \cite{bartoli2018maximum,csajbok2017maximum}, and the direct sum of  copies of suitable MRD codes among those just described \cite[Proposition 22]{loidreau}. 

As already well-known for classical coding theory involving the Hamming metric, also rank-metric codes have a geometric equivalent representation. This was initially described in several works \cite{sheekey2016new,csajbok2017maximum,zini2021scattered} for some special cases, and was definitely established independently by Sheekey \cite{sheekey2019scattered} and Randrianarisoa \cite{randrianarisoa2020geometric}. Namely, $\Fm$-linear codes of length $n$ {and }$\Fm$-dimension $k$ can be equivalently represented in a geometric way as $\fq$-subspaces of $\Fm^k$ of $\fq$-dimension $n$. They were named as $q$-systems and provide a sort of dual representation of rank-metric codes, which was shown to be useful in characterizing families of codes, as it happens for codes in the Hamming metric. Already Randrianarisoa in \cite{randrianarisoa2020geometric} exploited this geometric perspective to give a complete classification of $\Fm$-linear one-weight codes in the rank metric. 

Recently, in \cite{ABNR} a new class of rank-metric codes has been introduced and investigated{, the family} of \emph{minimal rank-metric codes}. The peculiarity of these codes is that the set of supports -- which in the rank-metric setting is given by $\fq$-subspaces of $\fq^n$ -- form an antichain with respect to the {inclusion } {and they} are the $q$-analogues of \emph{minimal linear codes} in the Hamming metric, which have been shown to have interesting  combinatorial and geometric properties  and applications to secret sharing schemes, as proposed by Massey {\cite{Massey1993,Massey1995}}. Using the geometric viewpoint, minimal rank-metric codes have been shown in \cite{ABNR} to correspond to linear cutting blocking sets, which are special $q$-systems $U$ with the following intersection property: for every $\Fm$-hyperplane {$H$}, it holds $\langle U\cap H\rangle_{\Fm}=H$.
Furthermore, always in \cite{ABNR}, constructions of minimal rank-metric codes were proposed using \emph{scattered subspaces}  {in a $3$-dimensional $\Fqm$-space.} 

\medskip

\noindent\textbf{Our contribution.} Motivated by the study of minimal rank-metric codes, we make further progresses on linear cutting  blocking sets and their properties. We give a new characterization of them in terms of their evasiveness properties. In particular, in Theorem \ref{Thm:caract} we show that a $q$-system $U$ of $\Fm^k$ of $\fq$-dimension $n$ is a linear cutting blocking set if and only if it is $(k-2,n-m-1)_q$-evasive. This means that every $(k-2)$-dimensional $\Fm$-subspace of $\Fm^k$ intersects $U$ in a space of $\fq$-dimension at most $n-m-1$. In a coding theoretic language, this translates in a very elegant and concise characterization described in Theorem \ref{thm:minimal_secondweight} which can be read as follows:

\begin{center}
  \textbf{Theorem 3.4.}  An $\Fm$-linear (nondegenerate) rank-metric code in $\Fm^n$ is minimal if and only if its second generalized rank weight is strictly greater than the field extension degree $m$.
\end{center}
This result does not seem to have an analogue in the Hamming metric, and thus it is genuinely new. 

Since every $q$-system containing a linear cutting blocking set is itself a linear cutting blocking set, it is then natural to look only for the small ones; or, in other words, for the existence of short minimal rank-metric codes. In this direction, we provide the first answers for small parameters. We completely settle the case $(k,m)=(4,3)$ for every prime power $q$ and then we construct a family of linear cutting blocking sets of $\fq$-dimension $8$ in $\F_{q^4}^4$, when $q$ is an odd power of $2$. Concretely, the latter is proved in Theorem \ref{thm:Uevasive}, where we show that the $q$-system,\begin{equation}\label{eq:uno}U := \left\{\left(x,y,x^q+y^{q^2},x^{q^2}+y^q+y^{q^2}\right) \, : \, x,y \in \mathbb{F}_{q^4} \right\}\end{equation}
is a linear cutting blocking set {of $\F_{q^4}^4$}. 

 As a byproduct, we also show that the codes associated to $U$ give rise to a new family of $4$-dimensional MRD codes. This immediately follows from the geometric result given in Theorem \ref{thm:Uscattered}, where $U$ is proved to be scattered. We then compare these codes with the known constructions of $\Fm$-linear MRD codes, realizing that not only they are new, but they are \emph{structurally new}. Indeed, the codes we construct cannot be obtained as direct sum of smaller MRD codes, in contrast to all the previously known constructions. We show that this also implies that these new codes have \emph{better parameters}, being their second generalized rank weight strictly larger. This is in strong contrast with what happens for the Hamming metric, in which MDS codes have all the same generalized (Hamming) weights. Thus, the new construction provides the first very concrete evidence that \emph{MRD codes can have substantially different parameters}.

\medskip
\noindent\textbf{Outline.} The paper is structured as follows. Section \ref{sec:perliminaries} introduces the main objects and notions that we need in the paper, giving a brief recap on rank-metric codes, $q$-systems and their evasiveness and cutting properties. In Section \ref{sec:cutting_evasive} we explore the link between linear cutting blocking sets and evasive $q$-systems, showing that minimal rank-metric codes can be characterized in terms of their second generalized rank weight. Section \ref{sec:construction_evasive} contains the study of the special $q$-system $U$ in \eqref{eq:uno}, which is shown to be scattered and $(2,3)_q$-evasive with the aid of some technical results. After discussing the  properties of the generalized rank weights of the associated MRD codes, we conclude in Section \ref{sec:conclusions}, listing some open problems and new research directions.

\section{Preliminaries}\label{sec:perliminaries}
In this section we recall the basic notions on $q$-systems, evasive subspaces, linear cutting blocking sets and their relations with linear rank-metric codes. 
We first introduce the setting. Let $q$ be a prime power and let $k,n,m$ be positive integers. We denote by $\fq$ the finite field with $q$ element and by $\Fm$ the extension field of degree $m$. Furthermore, for any positive integer $r$,  $V(r,q^m)$ is a vector space of dimension $r$ over $\Fm$. {This will be always identified with $\Fm^r$. Also, any $1$-dimensional $\Fqm$-vector subspace of $V$ will be called \textbf{point} of $V$.}

\subsection{Rank-metric codes}

Rank-metric codes were originally introduced by Delsarte in the late $70$'s in \cite{delsarte1978bilinear}, for a pure mathematical interest and with no applications in mind. They were reintroduced a few years later by Gabidulin in \cite{gabidulin1985theory}, and afterwards several applications of codes in the rank-metric were proposed, such as crisscross error correction \cite{roth1991maximum}, cryptography \cite{gabidulin1991ideals}, distributed storage \cite{rawat2013optimal}, and network coding \cite{silva2008rank}. Mathematically speaking, one can either define them as set of matrices over a (finite) field, or as set of vectors defined over an extension field. In this work we will only consider the vector representation, and in particular we will focus on rank-metric codes which are linear over the extension field.

On the vector space $V(n,q^m)$ we fix the metric induced by the $\fq$-rank. More precisely, the \textbf{rank weight}  is defined, for $v=(v_1,\ldots,v_n)\in V(n,q^m)$, as
$$ \wt_{\rk}(v)=\dim_{\fq} \langle v_1,\ldots,v_n\rangle_{\fq}.$$
If $\wt_{\rk}(v)=r$ and we fix an $\fq$-basis $u=(u_1,\ldots,u_r)$ of $\langle v_1,\ldots,v_n\rangle_{\fq}$, then there exists a matrix $A\in \fq^{r\times n}$ such that $v=uA$. The \textbf{rank support} of $v$ defined as
$$\supp(v):=\mathrm{rowsp}(A)\subseteq \fq^n,$$
where $\mathrm{rowsp}(A)$ denotes the $\fq$-span of the rows of $A$,
and it is well-defined since it does not depend on the choice of $u$.

The rank weight induces a metric, which is given by the \textbf{rank distance}, defined for $u,v \in V(n,q^m)$ as $\dd_{\rk}(u,v)=\wt_{\rk}(u-v)$.

\begin{defin}
An $\Fmk$ \textbf{(rank-metric) code} is a $k$-dimensional $\Fm$-subspace of $V(n,q^m)$ endowed with the rank distance.
\end{defin}

 Before introducing the metric properties and invariants of a rank-metric code, we introduce the notion of minimality of codewords and of codes; see also \cite{ABNR}. 
\begin{defin}
Let $\mC$ be an $\Fmk$ code. A nonzero codeword $v\in\mC$ is \textbf{minimal} if for every $u \in \mC$,
$$ \supp(u)\subseteq \supp(v)  \Longleftrightarrow u=\lambda v, \mbox{ for some } \lambda \in \Fm.$$
Furthermore, if every nonzero codeword of $\mC$ is minimal, then $\mC$ is said to be a \textbf{minimal rank-metric code}.
\end{defin}

Important invariants of rank-metric codes are given by their generalized rank weights. They have been first introduced and studied by Kurihara \emph{et al.} in  \cite{kurihara2012new,kurihara2015relative}, by Oggier and Sboui in \cite{oggier2012existence} and by Ducoat and Kyureghyan in \cite{ducoat2015generalized}. They are the analogue of  generalized weights in Hamming metric and are of great  interest due to their combinatorial properties (see \cite{ravagnani2016generalized,jurrius2017defining,ghorpade2020polymatroid}) and their applications to network coding (see \cite{martinez2016similarities,martinez2017relative}).

Let $\cA \subseteq V(n,q^m)$ and $\theta \in \Gal(\Fm/\fq)$, and denote by $\theta(\cA)$ the image of $\cA$ under the componentwise map $\theta$, that is 
$$\theta(\cA):= \{(\theta(a_1),\ldots,\theta(a_n)) \, : \, (a_1,\ldots,a_n) \in \cA\}.$$
We say that $\cA$ is \textbf{Galois closed} if  $\theta(\cA)=\cA$ for every $\theta \in \Gal(\Fm/\fq)$. Observe that this is equivalent to require that $\theta(\cA)=\cA$ for a generator of $\Gal(\Fm/\fq)$. For instance, we can just consider $\theta$ to be the $q$-Frobenius automorphism. We denote by $\Lambda_q(n,m)$ the set of Galois closed $\Fm$-subspaces of $V(n,q^m)$.

\begin{defin}
 Let $\mC$ be an $\Fmk$ code, and let $1\leq j \leq k$. The \textbf{$j$-th generalized rank weight} of $\mC$ is the integer
 $$ \dd_{\rk,j}(\mC):=\min \{\dim_{\Fm}(\cA) \,:\, \cA \in  \Lambda_q(n,m), \dim_{\Fm}(\cA\cap \mC) \geq {j}\}.$$
 For brevity, when we want to underline the generalized rank weights of $\mC$ we will call it an $\Fmkdd$ code, where $d_i:=\dd_{\rk,i}(\mC)$. 
\end{defin}

Observe that the first generalize weight coincides with the \textbf{minimum rank distance} of $\mC$, that is
$$ \dd_{\rk,1}(\mC)=\dd(\mC):=\min\{\rk(c) \, :\, c \in \mC\setminus \{0\}\}.$$
When only the minimum rank distance is relevant, we will write that $\mC$ is an $\Fmkd$ code.

The minimum rank distance of a code measures its error correction capability, and hence it is a fundamental parameter and it is crucial to have it as large as possible. However, there are some constraints on the parameters that one has to take into account. The most important one is given by the well-known Singleton bound.

\begin{theorem}[Singleton Bound \textnormal{\cite{delsarte1978bilinear}}]
 Let $\mC$ be an $\Fmkdd$ code. Then
 \begin{equation}\label{eq:singleton} mk\leq \min\{m(n-d_1+1), n(m-d_1+1)\}.\end{equation}
\end{theorem}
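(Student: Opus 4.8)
The plan is to pass from the vector description of $\mC$ to its matrix representation over $\fq$ and then extract the two quantities in the minimum from two parallel puncturing arguments, one on columns and one on rows. First I would fix an $\fq$-basis $\gamma_1,\dots,\gamma_m$ of $\Fm$ and expand each codeword $v=(v_1,\dots,v_n)\in\mC$ as $v_j=\sum_{i=1}^m M_{ij}\gamma_i$ with $M_{ij}\in\fq$, obtaining a matrix $M(v)\in\fq^{m\times n}$. The assignment $v\mapsto M(v)$ is an injective $\fq$-linear map, so its image is an $\fq$-subspace of $\fq^{m\times n}$ of $\fq$-dimension $\dim_{\fq}\mC=mk$. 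The crucial feature I would record is that this identification is rank-preserving: $\wt_{\rk}(v)=\dim_{\fq}\langle v_1,\dots,v_n\rangle_{\fq}$ is exactly the column rank, hence the rank, of $M(v)$. Writing $d:=d_1=\dd(\mC)$ for the minimum rank distance, every nonzero matrix in the image of $\mC$ then has rank at least $d$.

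For the first estimate I would delete any fixed set of $d-1$ columns, giving the $\fq$-linear projection $\pi_{\mathrm{col}}\colon\fq^{m\times n}\to\fq^{m\times(n-d+1)}$, and argue that $\pi_{\mathrm{col}}$ is injective on the image of $\mC$. Indeed, if $M(v)$ and $M(v')$ agreed after puncturing, then $M(v-v')$ would be supported on the $d-1$ deleted columns, hence have at most $d-1$ nonzero columns and rank at most $d-1<d$; since $v-v'\in\mC$, minimality of $d$ forces $v=v'$. This injectivity yields $mk=\dim_{\fq}\mC\le m(n-d+1)$.

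The second estimate I would obtain by running the identical argument with rows in place of columns: deleting $d-1$ rows gives $\pi_{\mathrm{row}}\colon\fq^{m\times n}\to\fq^{(m-d+1)\times n}$, which is again injective on the image of $\mC$ because a nonzero difference $M(v-v')$ supported on at most $d-1$ rows would have rank at most $d-1$. This produces $mk\le n(m-d+1)$, and taking the minimum of the two bounds gives \eqref{eq:singleton}.

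The genuinely rank-metric ingredient, and the only step requiring real care, is the symmetric observation that the rank of a matrix is bounded both by its number of nonzero columns and by its number of nonzero rows: the column puncturing is merely the classical Singleton bound in disguise (since the rank weight never exceeds the Hamming weight), whereas the row puncturing is what yields the extra constraint $n(m-d+1)$ with no analogue in the Hamming metric. I do not expect any serious obstacle beyond verifying cleanly that $v\mapsto M(v)$ is an $\fq$-linear, dimension- and rank-preserving identification, after which both inequalities follow from the same elementary injectivity argument.
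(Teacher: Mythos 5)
Your proof is correct. The paper does not prove this statement at all --- it is quoted as a known result with a citation to Delsarte --- so there is no internal argument to compare against; your double-puncturing argument on the $m\times n$ matrix representation (columns for the bound $m(n-d_1+1)$, rows for the bound $n(m-d_1+1)$) is the standard and complete way to establish it, and every step checks out: the expansion $v\mapsto M(v)$ is indeed an injective, $\fq$-linear, rank-preserving map onto an $mk$-dimensional $\fq$-subspace of $\fq^{m\times n}$, and a nonzero matrix supported on at most $d_1-1$ columns (or rows) has rank at most $d_1-1$, which forces injectivity of both projections. Note also that your argument never uses $\Fm$-linearity, so it proves the bound for arbitrary $\fq$-linear rank-metric codes, which is slightly more general than the statement as given.
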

A code $\mC$ is said to be \textbf{maximum rank distance (MRD)} {if Bound } \eqref{eq:singleton} is met with equality. 

The first construction of $\Fmk$ MRD codes was provided already by Delsarte \cite{delsarte1978bilinear} and Gabidulin \cite{gabidulin1985theory}, when $n\le m$. These codes are known today as \textbf{Delsarte-Gabidulin codes}. When $n\le m$, another construction was also provided more recently by Sheekey in \cite{sheekey2016new}. 
Additional constructions can be obtained when $n=tm$ and $k=tk'$  via the direct sum of $t$ copies of an $[m,k',m-k'+1]_{q^m/q}$ MRD code, e.g. a Delsarte-Gabidulin code. These constructions have been recently extended using geometric arguments in \cite{bartoli2018maximum,csajbok2017maximum}, for  $k$ odd, $m=2m'$ and $n=m'k$, giving $[km',k,m-1]_{q^m/q}$ MRD codes. They are obtained as direct sum of $[m,2,m-1]_{q^m/q}$ MRD codes (like Delsarte-Gabidulin codes) and $[3m',3,m-1]_{q^m/q}$ MRD codes.
In particular, these are the only constructions of $\Fmkd$ MRD codes with $n>m$.

{Also, }  the generalized rank weights are important for applications and give a measure on the security performance and the error correction capability of secure network coding; see  \cite{kurihara2015relative}. As for the minimum rank distance, one can derive bounds on the parameters of a code involving the generalized rank weights. In this case the bounds are more complicated.

\begin{prop}[Bounds \textnormal{\cite{martinez2016similarities}}]\label{prop:bounds_genweights}
 Let $\mC$ be an  $\Fmkdd$ code. Then for each $s \in \{1,\ldots,k\}$ we have
 \begin{equation}\label{eq:singbound_genweights} d_s \leq \min\Big\{n-k+s, sm, \frac{m}{n}(n-k)+m(s-1)+1 \Big\}.\end{equation}
\end{prop}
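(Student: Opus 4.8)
The plan is to establish the three quantities inside the minimum as separate upper bounds on $d_s := \dd_{\rk,s}(\mC)$. The first is a dimension count of Singleton type, valid for any linear code, whereas the remaining two genuinely use the structure of Galois closed subspaces, and hence are special to the rank metric. I will rely on the standard description of $\Lambda_q(n,m)$: every $\cA \in \Lambda_q(n,m)$ of $\Fm$-dimension $t$ has the form $\cA = V \otimes_{\fq} \Fm$ for a unique $\fq$-subspace $V \subseteq \fq^n$ with $\dim_{\fq} V = t$, and conversely; in particular Galois closed subspaces of every dimension $0 \le t \le n$ exist. I also use that the smallest Galois closed subspace containing a subspace $D \subseteq V(n,q^m)$, which I write $\overline{D}$, equals $\sum_{\theta \in \Gal(\Fm/\fq)} \theta(D)$, and that for a single codeword $c$ one has $\dim_{\Fm} \overline{\langle c \rangle_{\Fm}} = \wt_{\rk}(c)$ (its rank support tensored up to $\Fm$).

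For $d_s \le n-k+s$, I would pick any $\cA \in \Lambda_q(n,m)$ with $\dim_{\Fm} \cA = n-k+s$, which exists since $1 \le s \le k \le n$. Grassmann's identity inside the $n$-dimensional ambient space forces $\dim_{\Fm}(\cA \cap \mC) \ge \dim_{\Fm}\cA + \dim_{\Fm}\mC - n = s$, so $\cA$ is admissible in the definition of $d_s$ and the bound follows. For $d_s \le sm$ I would argue from the code side: choose any $s$-dimensional $D \subseteq \mC$ with basis $c_1, \dots, c_s$ and set $\cA = \overline{D}$. Then $\cA$ is Galois closed with $\dim_{\Fm}(\cA \cap \mC) \ge \dim_{\Fm} D = s$, while $\cA = \sum_{\theta} \theta(D)$ is spanned by the $sm$ vectors $\theta(c_i)$, so $\dim_{\Fm} \cA \le sm$ and therefore $d_s \le sm$.

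The third bound is the only one requiring real input, and I would deduce it from the sharper estimate $d_s \le d_1 + (s-1)m$ together with the minimum-distance Singleton bound. Concretely, take a codeword $c_1 \in \mC$ of minimal rank $\wt_{\rk}(c_1) = d_1$, extend it to a basis $c_1, \dots, c_s$ of some $s$-dimensional $D \subseteq \mC$, and put $\cA = \overline{D} = \overline{\langle c_1 \rangle_{\Fm}} + \dots + \overline{\langle c_s \rangle_{\Fm}}$. Bounding the first summand by $\dim_{\Fm}\overline{\langle c_1\rangle_{\Fm}} = d_1$ and each of the other $s-1$ summands by $m$, I get $\dim_{\Fm}\cA \le d_1 + (s-1)m$, whence $d_s \le d_1 + (s-1)m$. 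Rearranging the Singleton bound \eqref{eq:singleton} in the form $mk \le n(m - d_1 + 1)$ yields $d_1 \le \frac{m}{n}(n-k) + 1$, and substituting gives exactly $d_s \le \frac{m}{n}(n-k) + m(s-1) + 1$. The crux, and essentially the only nontrivial point, is this last bound: it rests on the facts that $\overline{D}$ is the sum of the Galois closures of any spanning set and that a minimum-rank codeword contributes only $d_1$, rather than the generic $m$, to $\dim_{\Fm}\overline{D}$; once these are in hand the estimate is immediate.
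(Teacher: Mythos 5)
Your proof is correct. Note that the paper does not actually prove this proposition: it is quoted verbatim from Mart\'inez-Pe\~nas \cite{martinez2016similarities}, so there is no in-paper argument to compare against. Your three arguments are all sound and are essentially the standard ones: the Grassmann count for $d_s\leq n-k+s$, the Galois closure of an $s$-dimensional subcode for $d_s\leq sm$, and the refinement $d_s\leq d_1+(s-1)m$ combined with the Singleton bound $mk\leq n(m-d_1+1)$ for the third bound, the key ingredient being the identity $\dim_{\Fm}\overline{\langle c\rangle_{\Fm}}=\wt_{\rk}(c)$, which holds because the Galois closure of the line through $c$ equals $\supp(c)\otimes_{\fq}\Fm$.
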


Apart from the bounds, the most important properties of the generalized rank weights are given by the strict monotonicity and the Wei-type duality. The latter involves the notion of dual code. If $\mC$ is an $[n,k]_{q^m/q}$ code, we define $\mC^\perp$ to be {its } orthogonal complement with respect to the standard inner product on $V(n,q^m)$. In other words, $\mC^\perp$ is the $[n,n-k]_{q^m/q}$ code given by
$$ \mC^\perp=\{ u \in V(n,q^m) \,:\, uv^\top=0, \mbox{ for every } v \in \mC \}$$
and it is called the \textbf{dual code} of $\mC$.

\begin{prop}[\textnormal{see \cite{kurihara2015relative,ducoat2015generalized}}]\label{prop:gen_weights_properties}
 Let $\mC$ be an $\Fmkdd$ code and let $\mC^\perp$ be its dual $[n,n-k,(d_1^\perp,\ldots,d_{n-k}^\perp)]_{q^m/q}$ code. Then
 \begin{enumerate}[label=(\arabic*)]
     \item $1\leq d_1<d_2<\ldots<d_k\leq n$. \hfill \textbf{(Monotonicity) }
     \item$\{d_1, \ldots,d_k\}\cup \{n+1-d_1^\perp,\ldots,  n+1-d_{n-k}^\perp\}=\{1,\ldots,n\}$. \hfill  \textbf{(Wei-type duality)} 
 \end{enumerate}
\end{prop}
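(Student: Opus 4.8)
The plan is to reduce both statements to the study of a single non-decreasing integer step function and its behaviour under orthogonality. The crucial structural input is a concrete description of $\Lambda_q(n,m)$: an $\Fm$-subspace $\cA$ is Galois closed if and only if it admits a basis of vectors in $\fq^n$, equivalently $\cA = W\otimes_{\fq}\Fm$ for a unique $\fq$-subspace $W\subseteq\fq^n$ with $\dim_{\fq}W = \dim_{\Fm}\cA$. Two consequences are recorded for later use. First, taking an $\fq$-hyperplane of $W$, respectively extending $W$ by one dimension, shows that every $\cA\in\Lambda_q(n,m)$ of dimension $a$ contains a member of $\Lambda_q(n,m)$ of dimension $a-1$ (when $a>0$) and is contained in one of dimension $a+1$ (when $a<n$). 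Second, since the standard inner product is defined over $\fq$, we have $\cA^\perp = W^\perp\otimes_{\fq}\Fm\in\Lambda_q(n,m)$, so $\cA\mapsto\cA^\perp$ is a dimension-reversing involution of $\Lambda_q(n,m)$.

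Next I would introduce $f(a) := \max\{\dim_{\Fm}(\cA\cap\mC) : \cA\in\Lambda_q(n,m),\ \dim_{\Fm}\cA = a\}$ for $0\le a\le n$. Using the extend/restrict property from the previous paragraph one checks $f(a)\le f(a+1)\le f(a)+1$, with $f(0)=0$ and $f(n)=k$; moreover, directly from the definition, $\dd_{\rk,j}(\mC)=\min\{a : f(a)\ge j\}$. Hence the $d_j$ are exactly the $k$ positions in $\{1,\dots,n\}$ at which $f$ jumps, and because $f$ increases by at most $1$ at each step, distinct weights occur at distinct positions. This yields at once $1\le d_1<d_2<\cdots<d_k\le n$, that is, Monotonicity.

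For the duality I would first establish the elementary identity, valid for every $\Fm$-subspace $\cA$,
\[ \dim_{\Fm}(\cA\cap\mC) - \dim_{\Fm}(\cA^\perp\cap\mC^\perp) = \dim_{\Fm}\cA - (n-k), \]
which follows from $(\cA+\mC)^\perp=\cA^\perp\cap\mC^\perp$ together with the dimension formula for $\cA\cap\mC$. Applying this to Galois-closed $\cA$ and using that $\cA\mapsto\cA^\perp$ is a dimension-reversing bijection of $\Lambda_q(n,m)$, the analogous function $f^\perp$ built from $\mC^\perp$ satisfies $f^\perp(b) = f(n-b)+b-k$ for all $0\le b\le n$ (the boundary values $f^\perp(0)=0$, $f^\perp(n)=n-k$ serve as a sanity check).

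Finally I would translate this relation into the combinatorics of jumps. Computing $f^\perp(s)-f^\perp(s-1) = 1 - (f(n-s+1)-f(n-s))$ shows that $s$ is a jump of $f^\perp$ if and only if $n-s+1$ is not a jump of $f$. Since the jumps of $f^\perp$ are exactly $d_1^\perp,\dots,d_{n-k}^\perp$, setting $r=n+1-s$ gives that $n+1-d_j^\perp$ runs precisely through those $r\in\{1,\dots,n\}$ with $r\notin\{d_1,\dots,d_k\}$; thus $\{n+1-d_j^\perp\}$ is the complement of $\{d_1,\dots,d_k\}$ in $\{1,\dots,n\}$, which is the Wei-type duality. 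The main obstacle, and the step deserving the most care, is the passage between the lattice of Galois-closed $\Fm$-subspaces and the lattice of $\fq$-subspaces of $\fq^n$: one must verify that extension, restriction, and orthogonality all stay inside $\Lambda_q(n,m)$ and behave dimension-additively, since this is exactly what makes $f$ a genuine $\{0,1\}$-step function and what makes the bijection used in the duality relation legitimate.
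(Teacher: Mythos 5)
Your proof is correct and complete. Note first that the paper itself does not prove this proposition: it is quoted from Kurihara--Matsumoto--Uyematsu and Ducoat, so there is no in-paper argument to compare yours against. Your route is essentially the standard one (the $q$-analogue of Wei's original duality argument): Galois descent identifies $\Lambda_q(n,m)$ with the lattice of $\fq$-subspaces of $\fq^n$, which gives the one-step extension/restriction property and the compatibility with $\perp$; the function $f(a)=\max\{\dim_{\Fm}(\cA\cap\mC):\cA\in\Lambda_q(n,m),\ \dim_{\Fm}\cA=a\}$ is then a non-decreasing function with unit steps from $f(0)=0$ to $f(n)=k$ whose $k$ jump positions are exactly the $d_j$, giving monotonicity; and the identity $\dim_{\Fm}(\cA\cap\mC)-\dim_{\Fm}(\cA^\perp\cap\mC^\perp)=\dim_{\Fm}(\cA)-(n-k)$, combined with the dimension-reversing bijection $\cA\mapsto\cA^\perp$ of $\Lambda_q(n,m)$, yields $f^\perp(b)=f(n-b)+b-k$ and hence that the jumps of $f^\perp$ are precisely the reflections $n+1-r$ of the non-jumps $r$ of $f$, which is Wei-type duality. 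All intermediate steps check out (in particular $(\cA+\mC)^\perp=\cA^\perp\cap\mC^\perp$ and the count of $k$ jumps of $f$ against $n-k$ jumps of $f^\perp$), and you correctly isolate the only genuinely nontrivial ingredient, namely that a Galois-closed subspace has the form $W\otimes_{\fq}\Fm$ with $W\subseteq\fq^n$; this is classical (Speiser's lemma, via linear independence of characters). It is worth observing that your argument nowhere uses nondegeneracy of $\mC$, so it establishes the statement in the full generality in which it is asserted.
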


We conclude by recalling the notions of (non)degeneracy and of equivalence of rank-metric codes. There are several equivalent ways to define nondegenerate rank-metric codes; see e.g. \cite[Proposition 3.2]{ABNR}. Here we give the following.

\begin{defin}
An $\Fmkdd$ code is said to be \textbf{nondegenerate} if $\dd_{\rk,k}(\mC)=n$. 
\end{defin}

Also concerning equivalence of codes there are a few ways to introduce this notion. Here we only consider equivalence of codes given by linear isometries of the ambient space $V(n,q^m)$.

\begin{defin}
 Two $\Fmk$ codes $\mC_1, \mC_2$ are said to be \textbf{(linearly) equivalent} if there exists $A\in \GL(n,q)$ such that $\mC_2=\mC_1\cdot A:=\{vA \,:\, v \in \mC_1\}$,
\end{defin}

\subsection{$q$-Systems}

In this section we recall some notions and results on $q$-systems. They were introduced by Sheekey in \cite{sheekey2019scattered} and by Randrianarisoa in \cite{randrianarisoa2020geometric}, as the natural geometric objects describing $\Fmk$ codes. In particular, we will focus on evasive subspaces and linear cutting blocking sets. 

We first introduce the notion of weight with respect to an $\fq$-subspace.
\begin{defin}
Let $U$ be an $\F_q$-subspace of $V(k,q^m)$. For an $\Fm$-subspace $H$ of $V(k,q^m)$, we define the \textbf{weight} of $H$ in $U$ the quantity  $\wt_{U}(H):=\dim_{\fq}(H\cap U)$.
\end{defin}

We now recall the definition of $q$-system, which was given in \cite{randrianarisoa2020geometric} for the first time. 

\begin{defin}
An $\Fmkdd$ \textbf{system} $U$ is an $\fq$-subspace of $V(k,q^m)$, with $\dim_{\fq}(U)=n$ and such that $\langle U \rangle_{\Fm}=V(k,q^m)$. For each $i \in \{1,\ldots,k\}$, the parameter $d_i$ is defined as
\begin{align*} d_i:=&\,n-\max\{\dim_{\fq}(U\cap H) \,:\, H \subseteq V(k,q^m) \mbox{ with } \dim_{\Fqm}(H)=k-i \}\\
=&\,n-\max\{\wt_U(H) \,:\, H \subseteq V(k,q^m) \mbox{ with } \dim_{\Fqm}(H)=k-i \}.
\end{align*}
 When the parameters $(d_1,\ldots,d_k)$ are not relevant/known we will write that $U$ is an $\Fmk$ system. Furthermore, when none of the parameters is relevant, we will generically refer to $U$  as a \textbf{$q$-system}.
 
 Two $\Fmkdd$ systems $U_1,U_2$ are \textbf{(linearly) equivalent} if there exists $A\in \GL(k,q^m)$ such that $ U_1\cdot A:=\{uA \,:\, u \in U_1\}=U_2$.
\end{defin}

The notation and the language used for studying $q$-systems are inherited from the theory of rank-metric codes. This is due to their strong interconnection that was first observed in \cite{sheekey2019scattered}, and  in \cite{randrianarisoa2020geometric}, and then further developed in \cite{ABNR,CMNT}. 

Let $\Ukdd$ denote the set of equivalence classes $[U]$ of $\Fmkdd$ systems, and let $\Ckdd$ denote the set of equivalence classes $[\mC]$ of nondegenerate $\Fmkdd$ codes. One can define the maps
$$\begin{array}{rccc}\Phi: & \Ckdd &\longrightarrow &\Ukdd\\
& [\mathrm{rowsp}( u_1^\top \mid \ldots \mid u_n^\top)] & \longmapsto & [\langle u_1, \ldots u_n\rangle_{\fq}] \end{array}, $$

$$\begin{array}{rccc}\Psi: & \Ukdd &\longrightarrow &\Ckdd \\
& [\langle u_1, \ldots u_n\rangle_{\fq}] & \longmapsto & [\mathrm{rowsp}( u_1^\top \mid \ldots \mid u_n^\top)] \end{array}.$$

\begin{theorem}[see \textnormal{\cite{randrianarisoa2020geometric}}]\label{thm:correspondence_codes_systems}
 The maps $\Phi$ and $\Psi$ are well-defined and they are one the inverse of each other. Hence, they define a one-to-one correspondence between equivalence classes of $\Fmkdd$ codes and equivalence classes of $\Fmkdd$ systems.
\end{theorem}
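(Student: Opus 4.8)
The plan is to reduce the whole statement to a single combinatorial dictionary: fixing a generator matrix $G=(u_1^\top\mid\cdots\mid u_n^\top)\in\Fm^{k\times n}$ of a code identifies its columns with an ordered $\fq$-spanning set of a subspace $U\subseteq V(k,q^m)$, and the entire theorem is about how this identification behaves under the two natural group actions. First I would record the elementary transformation rules. Left multiplication $G\mapsto AG$ with $A\in\GL(k,q^m)$ (a change of generator matrix of the \emph{same} code) leaves the row space fixed and sends $u_i\mapsto u_iA^\top$, hence $U\mapsto U\cdot A^\top$, an equivalent system. Right multiplication $G\mapsto GB$ with $B\in\GL(n,q)$ (a linear isometry, i.e.\ a change of code representative) replaces the $u_i$ by $\fq$-linear combinations of themselves and therefore leaves $U=\langle u_1,\dots,u_n\rangle_{\fq}$ unchanged, while a change of $\fq$-basis of $U$ amounts to the same right multiplication by some $B\in\GL(n,q)$ and so changes the code only up to equivalence. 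These computations show simultaneously that $\Phi$ and $\Psi$ are well defined on equivalence classes, and that $\Psi\circ\Phi$ and $\Phi\circ\Psi$ are the identity, since both compositions merely pass from $G$ to its columns and back.

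What remains, and what I regard as the real content, is to check that the two maps land in the prescribed parameter classes: that the invariants $(d_1,\dots,d_k)$ attached to $\mC$ as a code (its generalized rank weights) agree with those attached to $U$ as a system, and in particular that nondegeneracy of $\mC$ is equivalent to $\dim_{\fq}U=n$. The key local computation is the weight formula. Writing a codeword as $c=xG$ with $x\in V(k,q^m)$, its entries are $c_i=xu_i^\top$, so $\langle c_1,\dots,c_n\rangle_{\fq}$ is the image of $U$ under the $\fq$-linear map $w\mapsto xw^\top$; counting dimensions gives
\[
\wt_{\rk}(xG)=\dim_{\fq}U-\dim_{\fq}\big(U\cap x^\perp\big),\qquad x^\perp=\{w\in V(k,q^m):xw^\top=0\}.
\]
From this I would extract the rank-support description $\supp(c)^{\perp_{\fq}}=\{v\in\fq^n:cv^\top=0\}$, the point being that $v\mapsto\sum_i v_iu_i$ is a surjection $\fq^n\to U$ identifying $\{v:cv^\top=0\}$ with (the preimage of) $U\cap x^\perp$.

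The statement for all weights then follows by passing from single codewords to $j$-dimensional subcodes. For an $\Fm$-subspace $S\subseteq V(k,q^m)$ of dimension $j$ and the subcode $D=\{xG:x\in S\}$, intersecting the per-codeword kernels gives $\supp(D)^{\perp_{\fq}}$ as the preimage of $U\cap S^\perp$ under the same surjection, whence $\dim_{\fq}\supp(D)=\dim_{\fq}U-\wt_U(S^\perp)$. As $S$ ranges over all $j$-dimensional subspaces, $H:=S^\perp$ ranges over all $(k-j)$-dimensional $\Fm$-subspaces, so by the support characterization of the generalized rank weight,
\[
\dd_{\rk,j}(\mC)=\min_{\dim_{\Fm}D=j}\dim_{\fq}\supp(D)=\dim_{\fq}U-\max_{\dim_{\Fm}H=k-j}\wt_U(H).
\]
Specializing $j=k$ forces $S=V(k,q^m)$ and $H=0$, yielding $\dd_{\rk,k}(\mC)=\dim_{\fq}U$; thus $\mC$ is nondegenerate exactly when $\dim_{\fq}U=n$, and in that case the displayed identity becomes precisely the defining formula $d_j=n-\max\{\wt_U(H):\dim_{\Fm}H=k-j\}$ for the system. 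This matches every parameter and finishes the correspondence.

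I expect the main obstacle to be the bookkeeping around two dualities at once — the $\Fm$-annihilator $S\mapsto S^\perp$ in $V(k,q^m)$ and the $\fq$-orthogonality on $\fq^n$ entering the rank support — together with invoking (or reproving) the equivalence between the Galois-closed definition of $\dd_{\rk,j}$ used in this paper and the support-minimization characterization $\dd_{\rk,j}(\mC)=\min\{\dim_{\fq}\supp(D):\dim_{\Fm}D=j\}$. Establishing that $\langle\supp(D)\rangle_{\Fm}$ is the optimal Galois-closed space, and that nondegeneracy is genuinely what keeps $\dim_{\fq}U=n$ (so that the system is a true $[n,k]$ system and not one in fewer coordinates), is the delicate point; the rest is linear algebra driven by the weight formula above.
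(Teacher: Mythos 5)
The paper does not actually prove this statement: it is imported verbatim from Randrianarisoa's work, so there is no internal proof to compare against. Judged on its own, your argument is the standard one and is essentially correct: the transformation rules (left multiplication by $A\in\GL(k,q^m)$ versus right multiplication by $B\in\GL(n,q)$) do establish well-definedness of $\Phi$ and $\Psi$ and that they are mutually inverse, and your weight formula $\wt_{\rk}(xG)=\dim_{\fq}U-\dim_{\fq}(U\cap x^{\perp})$, together with its subcode version $\dim_{\fq}\supp(D)=\dim_{\fq}U-\wt_U(S^{\perp})$ and the duality $S\mapsto S^{\perp}$ between $j$-dimensional and $(k-j)$-codimensional $\Fm$-subspaces, correctly matches the parameters $(d_1,\ldots,d_k)$ on both sides and identifies nondegeneracy with $\dim_{\fq}U=n$.

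The one genuine dependency, which you correctly flag yourself, is the identity $\dd_{\rk,j}(\mC)=\min\{\dim_{\fq}\supp(D):D\subseteq\mC,\ \dim_{\Fm}(D)=j\}$. This is \emph{not} the definition adopted in this paper, which uses Galois-closed spaces $\cA\in\Lambda_q(n,m)$; the equivalence of the two (via the fact that the smallest Galois-closed space containing a subcode $D$ has $\Fm$-dimension $\dim_{\fq}\supp(D)$, i.e.\ is $\supp(D)\otimes_{\fq}\Fm$ under the natural identification) is a nontrivial theorem in its own right, due to Ravagnani and to Jurrius--Pellikaan, and would need to be cited or reproved to make your argument complete. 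With that reference supplied, the proof closes; without it, the step from the Galois-closed definition to your support-minimization formula is the only gap. A minor additional remark: well-definedness of $\Phi$ also requires checking that $\langle U\rangle_{\Fm}=V(k,q^m)$, which follows from $G$ having $\Fm$-rank $k$, and is worth stating explicitly alongside the nondegeneracy condition $\dim_{\fq}U=n$.
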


In light of Theorem \ref{thm:correspondence_codes_systems}, from now on, for any $\Fmkdd$ system $U$, we say that a nondegenerate $\Fmkdd$ code $\mC$ is \textbf{associated} to $U$ if $\mC \in \Psi([U])$. Similarly, an $\Fmkdd$ system $U$ will be said to be \textbf{associated} to a nondegenerate $\Fmkdd$ code $\mC$ if $U\in \Phi([\mC])$.

\bigskip 

A special family of $q$-systems is given by the so-called evasive subspaces, which generalize scattered subspaces.

\begin{defin}
Let $h,r$ be positive integers such that $h<k$. An $\Fmk$ system $U$  is said to be an \textbf{$(h,r)_q$-evasive subspace} (or simply \textbf{$(h,r)_q$-evasive}) if $\dim_{\fq}(U\cap H)\leq r$ for each $\Fm$-subspace $H$ of $V(k,q^m)$ with $\dim_{\Fm}(H)=h$.
 When $r=h$, an $(h,h)_q$-evasive subspace is called \textbf{$h$-scattered}. Furthermore, when $h=1$, a $1$-scattered subspace will be simply called  \textbf{scattered}. 
\end{defin}

{From \cite[Theorem 2.3]{CsMPZ2019}, if $U$ is an $h$-scattered of $V(k,q^m)$, then $\dim_{\fq}(U)\leq \frac{km}{h+1}$. When the equality is reached, then $U$ is said be \textbf{maximum}.
}

Evasive subspaces are a special family of evasive sets, which were introduced first by Pudl\'ak and R\"odl \cite{pudlak2004pseudorandom}. These objects were then analyzed by Guruswami \cite{guruswami2011linear, guruswami2016explicit}, Dvir and Lovett \cite{dvir2012subspace} in connection with list decodability of codes with optimal rate and constant list-size.
A mathematical theory of evasive subspace {was recently developed } in \cite{BCMT}.

The following result highlights the relations between evasive subspaces and the parameters of the associated rank-metric codes.

\begin{theorem}[see \textnormal{\cite[Theorem 3.3]{CMNT}}]\label{thm:charact_evasive_genweights}
 Let $\mC$ be an $\Fmk$ code, and let $U\in\Phi([\mC])$. Then, the following are equivalent.
\begin{enumerate}[label=(\arabic*)]
 \item $U$ is an $(h,r)_q$-evasive subspace.
 \item $\dd_{\rk,k-h}(\mC) \geq n-r$.
 \item $\dd_{\rk,r-h+1}(\mC^\perp)\geq r+2$.
\end{enumerate}
  In particular, $\dd_{\rk,k-h}(\mC) = n-r$ if and only if $U$ is $(h,r)_q$-evasive but not $(h,r-1)_q$-evasive.
\end{theorem}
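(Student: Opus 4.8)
The plan is to reduce everything to the interplay between the geometric description of generalized rank weights and the Wei-type duality. First I would invoke the correspondence of Theorem~\ref{thm:correspondence_codes_systems}: since $U\in\Phi([\mC])$, the code $\mC$ and the system $U$ carry the same parameters $(d_1,\ldots,d_k)$, so for every $i$ one has
$$\dd_{\rk,i}(\mC)=d_i=n-\max\{\dim_{\fq}(U\cap H)\,:\,\dim_{\Fm}(H)=k-i\}.$$
With this identity the equivalence $(1)\Leftrightarrow(2)$ is immediate: specializing to $i=k-h$ forces $\dim_{\Fm}(H)=h$, and $U$ being $(h,r)_q$-evasive means exactly $\max\{\dim_{\fq}(U\cap H):\dim_{\Fm}(H)=h\}\le r$, i.e. $d_{k-h}=n-\max\{\cdots\}\ge n-r$. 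The \emph{in particular} clause then drops out for free: applying $(1)\Leftrightarrow(2)$ with $r$ and with $r-1$ shows that $U$ is $(h,r)_q$-evasive but not $(h,r-1)_q$-evasive precisely when $n-r\le \dd_{\rk,k-h}(\mC)<n-r+1$, that is $\dd_{\rk,k-h}(\mC)=n-r$.

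The substance is in $(2)\Leftrightarrow(3)$, where I would use the Wei-type duality of Proposition~\ref{prop:gen_weights_properties}. Write $D:=\{d_1,\ldots,d_k\}$ and $D^\perp:=\{n+1-d_1^\perp,\ldots,n+1-d_{n-k}^\perp\}$. By monotonicity both families are strictly increasing, so $|D|=k$ and $|D^\perp|=n-k$; since their union is $\{1,\ldots,n\}$, a cardinality count forces the union to be disjoint, so $D$ and $D^\perp$ partition $\{1,\ldots,n\}$. I would then translate $(2)$ into a counting statement inside the window $W:=\{n-r,n-r+1,\ldots,n\}$, which has $|W|=r+1$. By monotonicity of the $d_j$, the inequality $d_{k-h}\ge n-r$ holds if and only if the $h+1$ largest weights $d_{k-h},\ldots,d_k$ all lie in $W$, i.e. $|D\cap W|\ge h+1$.

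Now I would pass to the complement. Because $D$ and $D^\perp$ partition $\{1,\ldots,n\}$, one has $|D\cap W|+|D^\perp\cap W|=|W|=r+1$, so $(2)$ is equivalent to $|D^\perp\cap W|\le r-h$. An element $n+1-d_i^\perp$ lies in $W$ exactly when $n-r\le n+1-d_i^\perp\le n$, i.e. when $1\le d_i^\perp\le r+1$; hence $|D^\perp\cap W|=|\{i:d_i^\perp\le r+1\}|$. Finally, invoking the monotonicity $d_1^\perp<\cdots<d_{n-k}^\perp$ once more, the condition $|\{i:d_i^\perp\le r+1\}|\le r-h$ is equivalent to $d_{r-h+1}^\perp>r+1$, that is $d_{r-h+1}^\perp\ge r+2$ since the weights are integers. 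This is exactly $(3)$, closing the cycle.

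I expect the main obstacle to be the bookkeeping of the index shifts in $(2)\Leftrightarrow(3)$: one must keep careful track of the size $r+1$ of the window $W$, verify that the Wei union is genuinely disjoint (the cardinality argument is what guarantees this), and correctly convert cardinality bounds on monotone sequences into single-index inequalities. None of the individual steps is deep, but a sign or off-by-one error in the window boundaries, or in the passage between $d_{k-h}$, the count $|D\cap W|$, and the dual index $r-h+1$, would break the equivalence.
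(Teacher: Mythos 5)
This theorem is imported from \cite[Theorem 3.3]{CMNT} and the paper gives no proof of it, so there is nothing internal to compare against; I can only assess your argument on its own terms, and it is correct. Your two ingredients are exactly the natural ones: the equivalence $(1)\Leftrightarrow(2)$ (and the ``in particular'' clause) is an immediate unwinding of the identity $\dd_{\rk,i}(\mC)=n-\max\{\wt_U(H):\dim_{\Fm}(H)=k-i\}$, which is precisely what the correspondence of Theorem~\ref{thm:correspondence_codes_systems} encodes by assigning the same tuple $(d_1,\ldots,d_k)$ to a code and to its associated $q$-system (the paper itself uses this identity freely, e.g.\ in the proof of Proposition~\ref{prop:genweights_directsum}); and your derivation of $(2)\Leftrightarrow(3)$ from monotonicity plus Wei-type duality is sound --- the cardinality count correctly forces $D$ and $D^\perp$ to partition $\{1,\ldots,n\}$, the window $W$ has size $r+1$, and the translation between ``$|\{i:d_i^\perp\le r+1\}|\le r-h$'' and ``$d_{r-h+1}^\perp\ge r+2$'' is right. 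I verified the bookkeeping against the paper's own data point ($[8,4,(3,5,7,8)]_{q^4/q}$ with $h=2$, $r=3$) and it is consistent. The only caveat, which is a feature of the theorem statement rather than of your proof, is that condition $(3)$ implicitly requires $1\le r-h+1\le n-k$ for the index to make sense; outside that range your counting argument still shows $(2)$ holds automatically, which is the degenerate reading of $(3)$.
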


We conclude this section by recalling another family of $q$-systems which was recently introduced in \cite{ABNR}.

\begin{defin}
An $\Fmk$ system $U$ is said to be \textbf{$t$-cutting} if for every $\Fm$-subspace $H$ of $V(k,q^m)$ of codimension $t$ we have $\langle H \cap U \rangle_{\F_{q^m}} =H$. When $t=1$, we simply say that $U$ is \textbf{cutting} (or a \textbf{linear cutting blocking set}).
\end{defin}

The study of these objects was due to their connection to minimal rank-metric codes, as one can see from the following result.

\begin{theorem}[see \textnormal{\cite[Corollary 5.7]{ABNR}}]
Let $\mC$ be an $\Fmk$ code, and let $U \in \Phi([\mC])$ be any of the associated $\Fmk$ systems. Then, $\mC$ is a minimal rank-metric code if and only if $U$ is a linear cutting blocking set.
\end{theorem}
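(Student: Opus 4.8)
The plan is to translate the coordinate-free notion of rank support into the geometry of $U$, and then read off minimality hyperplane by hyperplane. Throughout I fix a generator matrix $G=(u_1^\top\mid\cdots\mid u_n^\top)$ of $\mC$ associated to $U=\langle u_1,\ldots,u_n\rangle_{\fq}$ via the map $\Phi$, so that every codeword is $c=xG=(\langle x,u_1\rangle,\ldots,\langle x,u_n\rangle)$ for a unique $x\in V(k,q^m)$, where $\langle x,u\rangle=xu^\top$ is the standard nondegenerate $\Fm$-bilinear form. Since $\mC$ is nondegenerate, the $u_i$ are $\fq$-linearly independent, so the evaluation map $\pi\colon\fq^n\to U$, $a\mapsto\sum_i a_iu_i$, is an $\fq$-isomorphism. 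To each nonzero $x$ I attach the hyperplane $H_x:=\{u\in V(k,q^m):\langle x,u\rangle=0\}$.

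First I would establish the \emph{support formula}. A direct computation identifies the orthogonal complement (in $\fq^n$, for the standard inner product) as $\supp(c)^\perp=\{a\in\fq^n:\sum_i a_ic_i=0\}$: indeed $a\mapsto\sum_i a_ic_i$ is $\fq$-linear with image $\langle c_1,\ldots,c_n\rangle_{\fq}$, hence its kernel has dimension $n-\wt_{\rk}(c)=\dim\supp(c)^\perp$, and $\sum_i a_ic_i=0$ is exactly the condition $ca^\top=0$ defining that kernel. Rewriting $\sum_i a_ic_i=\langle x,\pi(a)\rangle$ then gives $a\in\supp(c)^\perp\iff\pi(a)\in H_x$, so $\pi$ identifies $\supp(c)^\perp$ with $U\cap H_x$. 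This is the technical heart of the argument and the step I expect to be the main obstacle, since it requires matching the abstract row-space definition of $\supp(c)$ with the pairing against $U$; once it is in place, the rest is formal.

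Next I would transport containment and scalar-multiplicity through $\pi$. For $c=xG$ and $c'=x'G$ with $x\neq 0$, taking orthogonals (which reverses inclusions) and applying $\pi$ yields $\supp(c')\subseteq\supp(c)\iff U\cap H_x\subseteq H_{x'}\iff x'\in(U\cap H_x)^\perp$, where $\perp$ now denotes the $\Fm$-orthogonal in $V(k,q^m)$. On the other hand, injectivity of $x\mapsto xG$ gives $c'=\lambda c\iff x'\in\langle x\rangle_{\Fm}$, and $\supp(\lambda c)=\supp(c)$ makes the reverse implication in the definition of minimality automatic. Hence the codeword $c$ is minimal precisely when $(U\cap H_x)^\perp=\langle x\rangle_{\Fm}$; taking orthogonal complements and using $\langle x\rangle_{\Fm}^\perp=H_x$ together with $(U\cap H_x)^{\perp\perp}=\langle U\cap H_x\rangle_{\Fm}$, this is equivalent to $\langle U\cap H_x\rangle_{\Fm}=H_x$.

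Finally I would quantify over codewords. Since the form is nondegenerate, $x\mapsto H_x$ is a bijection from the points of $V(k,q^m)$ onto its $\Fm$-hyperplanes. Therefore $\mC$ is minimal $\iff$ every nonzero $c=xG$ is minimal $\iff\langle U\cap H\rangle_{\Fm}=H$ for every $\Fm$-hyperplane $H$, which is exactly the statement that $U$ is a linear cutting blocking set. This settles both implications at once.
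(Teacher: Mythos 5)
Your argument is correct. Note that the paper does not prove this statement at all: it is imported verbatim from \cite[Corollary 5.7]{ABNR}, so there is no internal proof to compare against; your write-up is essentially the argument of that reference, whose technical core is exactly your support formula identifying $\supp(xG)^{\perp}$ with $U\cap H_x$ via the evaluation map $a\mapsto\sum_i a_iu_i$. The only point worth tightening is the justification of that formula: equality of dimensions alone does not identify the two subspaces, but the easy inclusion $\supp(c)^{\perp}\subseteq\{a: ca^{\top}=0\}$ (writing $c=uA$ with the entries of $u$ linearly independent over $\fq$, so that $uAa^{\top}=0$ iff $Aa^{\top}=0$) combined with your dimension count closes this immediately; the remaining reductions (reversing inclusions under orthogonals, $(U\cap H_x)^{\perp\perp}=\langle U\cap H_x\rangle_{\Fm}$, and the bijection $\langle x\rangle_{\Fm}\mapsto H_x$ onto hyperplanes) are all sound.
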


The following bound on the parameters of linear cutting blocking sets was derived in \cite{ABNR}.

\begin{prop}[see \textnormal{\cite[Corollary 5.10]{ABNR}}]\label{prop1}
Let $U$ be a cutting $\Fmk$ system, with $k \geq 2$. Then $n\geq m+k-1$.
\end{prop}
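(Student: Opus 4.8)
The plan is to reduce the cutting condition to a statement about the image of $U$ in a $2$-dimensional quotient, and then to invoke a counting lemma on line-blocking $\fq$-subspaces of $\Fm^2$. The core claim I aim to prove is that $U$ being cutting forces every $(k-2)$-dimensional $\Fm$-subspace $S$ of $V(k,q^m)$ to satisfy $\dim_{\fq}(U\cap S)\le n-m-1$. Granting this, the bound follows immediately: since $\langle U\rangle_{\Fm}=V(k,q^m)$, one can extract from $U$ an $\Fm$-basis $u_1,\dots,u_k$ of $V(k,q^m)$, and then $S_0:=\langle u_1,\dots,u_{k-2}\rangle_{\Fm}$ is a $(k-2)$-dimensional subspace containing the $\fq$-independent vectors $u_1,\dots,u_{k-2}\in U$, so $\dim_{\fq}(U\cap S_0)\ge k-2$. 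Comparing with the core upper bound gives $k-2\le n-m-1$, that is $n\ge m+k-1$. (This also covers the base case $k=2$, where $S_0=\{0\}$.)

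To establish the core claim, I would fix a $(k-2)$-dimensional $\Fm$-subspace $S$ and let $\pi\colon V(k,q^m)\to V(k,q^m)/S$ be the quotient map, whose target is $2$-dimensional over $\Fm$. Since $\ker(\pi|_U)=U\cap S$, we have $\dim_{\fq}\pi(U)=n-\dim_{\fq}(U\cap S)$, so it suffices to show $\dim_{\fq}\pi(U)\ge m+1$. The key point is that $\pi(U)$ meets every $1$-dimensional $\Fm$-subspace of the quotient nontrivially: each such line has the form $H/S$ for a unique hyperplane $H\supseteq S$, and because $U$ is cutting we have $\langle U\cap H\rangle_{\Fm}=H\supsetneq S$, so $U\cap H\not\subseteq S$; any $u\in(U\cap H)\setminus S$ then yields a nonzero vector $\pi(u)\in\pi(U)\cap(H/S)$. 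Thus $W:=\pi(U)$ is an $\fq$-subspace of a plane $\cong\Fm^2$ meeting every line through the origin.

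The heart of the argument is then the elementary lemma that such a $W$ must have $\dim_{\fq}W\ge m+1$; this is precisely where the additive term $m$ in the bound is produced, and I expect it to be the main point to get right. It follows by counting: the $q^m+1$ lines through the origin partition $\Fm^2\setminus\{0\}$, and since a line met by $W$ contains at least $q-1$ nonzero vectors of $W$, the number of lines met by $W$ is at most $(q^{\dim_{\fq}W}-1)/(q-1)$. Requiring all $q^m+1$ lines to be met gives $q^{\dim_{\fq}W}-1\ge(q^m+1)(q-1)$, so that $q^{\dim_{\fq}W}\ge q^m(q-1)+q>q^m$ and hence $\dim_{\fq}W\ge m+1$. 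Applying this to $W=\pi(U)$ yields $n-\dim_{\fq}(U\cap S)\ge m+1$, completing the core claim and the proof. Alternatively, once the core claim is rephrased as $d_2\ge m+1$, one may close the bound using the strict monotonicity $d_2<d_3<\cdots<d_k\le n$ from Proposition \ref{prop:gen_weights_properties}, which forces $n\ge d_k\ge(m+1)+(k-2)=m+k-1$.
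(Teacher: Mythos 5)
Your proof is correct. The paper itself gives no argument for this proposition (it is imported from \cite[Corollary 5.10]{ABNR}), so there is no in-paper proof to match line by line; but your core claim --- that cutting forces $\dim_{\fq}(U\cap S)\le n-m-1$ for every $(k-2)$-dimensional $\Fm$-subspace $S$ --- is exactly the ``cutting $\Rightarrow$ $(k-2,n-m-1)_q$-evasive'' direction of the paper's Theorem \ref{Thm:caract}, which the authors prove later by an essentially equivalent count. Where you pass to the quotient $V(k,q^m)/S\cong\Fm^2$, note that $\pi(U)$ blocks every $\Fm$-line, and deduce $q^{\dim_{\fq}\pi(U)}\ge (q^m+1)(q-1)+1>q^m$, the paper works upstairs: it sums $|U\cap H|-|U\cap M|$ over the $q^m+1$ hyperplanes $H$ through a bad $(k-2)$-space $M$ and gets $q^{h}(q-1)(q^m+1)+q^{h}\le q^n$ with $h\ge n-m$, a contradiction. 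The two counts differ by the factor $q^{\dim_{\fq}(U\cap S)}$; your quotient formulation is arguably the cleaner organization, since it isolates the line-blocking lemma in the plane $\Fm^2$, which is precisely where the additive $m$ is produced. Both of your ways of closing the bound are sound: exhibiting $S_0$ spanned by $k-2$ vectors of an $\Fm$-basis extracted from $U$ gives $\dim_{\fq}(U\cap S_0)\ge k-2$ directly, while the alternative via $d_2\ge m+1$ and the strict monotonicity of Proposition \ref{prop:gen_weights_properties} is exactly the route one would take from the paper's Theorems \ref{Thm:caract} and \ref{thm:minimal_secondweight}. In short, you have reproved a direction of Theorem \ref{Thm:caract} and derived the proposition from it, which is a perfectly valid and self-contained path.
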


Moreover, always in \cite{ABNR} it was observed that linear cutting blocking sets are related with scattered subspaces when $k=3$. In this case, scattered subspaces were used to construct linear cutting blocking sets, as the following result shows.

\begin{prop}[see \textnormal{\cite[Theorem 6.3]{ABNR}}]\label{prop2}
If $U$ is a scattered $\Fms{n}{3}{m}$ system with $n \geq m+2$, then $U$ is cutting.
\end{prop}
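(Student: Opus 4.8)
The plan is to give a direct geometric argument tailored to the case $k=3$, in which an $\Fm$-hyperplane $H$ of $V(3,q^m)$ is simply a $2$-dimensional $\Fm$-subspace, and the scattered hypothesis means exactly that $\dim_{\fq}(U\cap P)\le 1$ for every point $P$ of $V(3,q^m)$. I would fix an arbitrary $\Fm$-hyperplane $H$ and set $W:=\langle U\cap H\rangle_{\Fm}$, an $\Fm$-subspace of $H$. Since $U$ is cutting precisely when $W=H$ for every such $H$, it suffices to rule out the possibility $W\subsetneq H$, i.e.\ $\dim_{\Fm}(W)\le 1$.

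The two ingredients are a lower and an upper bound on $\dim_{\fq}(U\cap H)$. For the lower bound I would view $V(3,q^m)$ as a $3m$-dimensional $\fq$-space: then $\dim_{\fq}(U)=n$, $\dim_{\fq}(H)=2m$, and the Grassmann identity yields
\[
\dim_{\fq}(U\cap H)\ \ge\ \dim_{\fq}(U)+\dim_{\fq}(H)-\dim_{\fq}V(3,q^m)\ =\ n+2m-3m\ =\ n-m\ \ge\ 2,
\]
where the last inequality is exactly the hypothesis $n\ge m+2$.

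For the upper bound I would argue by contradiction, assuming $\dim_{\Fm}(W)\le 1$. Then $W$ is contained in some point $P$ of $V(3,q^m)$ (take $P=W$ if $\dim_{\Fm}(W)=1$, and any point containing $W$ if $W=\{0\}$). Since $U\cap H\subseteq W\subseteq P$ and $U\cap H\subseteq U$, we obtain $U\cap H\subseteq U\cap P$, so the scattered property forces $\dim_{\fq}(U\cap H)\le \dim_{\fq}(U\cap P)\le 1$. This contradicts the lower bound $\dim_{\fq}(U\cap H)\ge 2$ established above. Hence $\dim_{\Fm}(W)=2$, so $W=H$, and as $H$ was an arbitrary $\Fm$-hyperplane we conclude that $U$ is cutting.

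The argument is short, so there is no serious obstacle; the only point needing a little care is the borderline case $\dim_{\Fm}(W)=0$ in the upper-bound step, where one must still place $W$ inside a point in order to invoke scatteredness. I would also mention an alternative, more uniform route through Theorem \ref{thm:charact_evasive_genweights}: scatteredness of $U$ is $(1,1)_q$-evasiveness, which can be rephrased as a constraint on a generalized rank weight of the associated code, and the cutting property can be read off similarly; however, the elementary dimension count above is cleaner and makes transparent exactly where the bound $n\ge m+2$ is used.
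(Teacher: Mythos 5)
Your proof is correct. The paper itself only cites this result (from \cite[Theorem 6.3]{ABNR}) without reproving it, but your argument --- Grassmann lower bound $\dim_{\fq}(U\cap H)\ge n-m\ge 2$ combined with the scattered upper bound on $\dim_{\fq}(U\cap P)$ for the point $P$ containing $\langle U\cap H\rangle_{\Fm}$ --- is exactly the $k=3$ instance of the dimension count the paper uses in the $(\Rightarrow)$ direction of its generalization, Theorem \ref{Thm:caract}, so it is essentially the same approach.
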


In the next section we will further investigate the properties and the parameters of linear cutting blocking sets, and their {connection } with evasive subspaces. In particular, we will answer the following two natural questions:
Can we generalize the  Proposition \ref{prop2} to larger values of $k$? Does the converse of Proposition \ref{prop2} hold? The answers are both contained in a more general result given in Theorem \ref{Thm:caract}.

\section{Linear cutting blocking sets and evasive subspaces}\label{sec:cutting_evasive}

In this section we derive new results on evasive subspaces and linear cutting blocking sets. In particular, our aim is to prove the main result of Theorem \ref{Thm:caract} which shows how these two objects are related.

\subsection{Evasive subspaces}

We first start with evasive subspaces, showing in which cases an $(h,r)_q$-evasive $q$-system can also be  $(h',r')_q$-evasive, for some special parameters $(h',r')$.

The next result is an improvement on \cite[Proposition 2.6]{BCMT}.

\begin{prop}\label{prop:evasive}
Let $U$ be an $(h,r)_q$-evasive $\Fmk$ system and let $s<h$ be such that $n-r+s>\frac{(k-h+s)m}{2}$. Then $U$ is $(h-s,r-s-1)_q$-evasive.
\end{prop}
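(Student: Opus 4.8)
The plan is to argue by contraposition and to remove one $\Fm$-dimension at a time, so that the whole statement reduces to the case $s=1$, which I would then promote to arbitrary $s$ by induction. The engine of the $s=1$ case is the scattered ($1$-scattered) bound $\dim_{\fq}(W)\le \frac{Nm}{2}$ for a scattered $\fq$-subspace $W$ of $V(N,q^m)$ (see \cite[Theorem 2.3]{CsMPZ2019}), applied not to $U$ itself but to its image in a suitable quotient. Throughout I use that $m\ge 2$.

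For the base case $s=1$, fix an $\Fm$-subspace $H_0$ with $\dim_{\Fm}(H_0)=h-1$ and put $w_0:=\dim_{\fq}(U\cap H_0)=\wt_U(H_0)$; I must show $w_0\le r-2$. Consider the projection $\pi\colon V(k,q^m)\to \bar V:=V(k,q^m)/H_0$, where $\dim_{\Fm}(\bar V)=k-h+1$, and set $\bar U:=\pi(U)$. Since $U$ spans $V(k,q^m)$, the image $\bar U$ is a nonzero $q$-system of $\fq$-dimension $n-w_0$, and for every $\Fm$-subspace $\bar H\subseteq \bar V$ one has the identity $\dim_{\fq}(U\cap \pi^{-1}(\bar H))=w_0+\dim_{\fq}(\bar U\cap \bar H)$. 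Assume for a contradiction that $w_0\ge r-1$. Then for every point $\bar P\subseteq \bar V$ the subspace $\pi^{-1}(\bar P)$ is $h$-dimensional, so $(h,r)_q$-evasiveness gives $\dim_{\fq}(\bar U\cap \bar P)\le r-w_0\le 1$; hence $\bar U$ is scattered in $\bar V$, and the scattered bound yields $n-w_0\le \frac{(k-h+1)m}{2}$. Comparing this with the hypothesis $n-r+1>\frac{(k-h+1)m}{2}$ forces $n-w_0<n-r+1$, i.e. $w_0\ge r$. But then $\dim_{\fq}(\bar U\cap \bar P)\le r-w_0\le 0$ for every point $\bar P$, so $\bar U=\{0\}$, contradicting the fact that $\bar U$ spans the nonzero space $\bar V$.

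For general $s$ I would induct on $s$. First, applying the case $s=1$ to the $(h,r)_q$-evasive system $U$ shows that $U$ is $(h-1,r-2)_q$-evasive, once $n-r+1>\frac{(k-h+1)m}{2}$; this is inherited from $n-r+s>\frac{(k-h+s)m}{2}$ because passing from the pair $(h,r)$ to $(h-1,r-2)\cdots$ drops the left-hand side by $s-1$ and the right-hand side by $\frac{(s-1)m}{2}$, and $\frac{(s-1)m}{2}\ge s-1$ exactly when $m\ge 2$. I would then apply the inductive hypothesis for $s-1$ to the now $(h-1,r-2)_q$-evasive system $U$, whose required numerical condition simplifies to $n-r+s+1>\frac{(k-h+s)m}{2}$ and is thus immediate from the hypothesis. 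The conclusion is that $U$ is $(h-s,r-s-2)_q$-evasive, which is even stronger than the claimed $(h-s,r-s-1)_q$-evasiveness.

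The two points I expect to be delicate are the following. First, one cannot shortcut the induction by quotienting in a single step down to $\dim_{\Fm}(H_0)=h-s$ and invoking the $s$-scattered bound: the negation would only make $\bar U$ an $(s,s)_q$-evasive (that is, $s$-scattered) subspace, whose bound $\dim_{\fq}(\bar U)\le \frac{(k-h+s)m}{s+1}$ has denominator $s+1$, too weak to clash with the threshold $\frac{(k-h+s)m}{2}$ present in the hypothesis as soon as $s\ge 2$. It is precisely this ``factor $2$'' that dictates the one-dimension-at-a-time reduction, where only the genuine scattered bound (denominator $2$) is used. Second, the real bookkeeping is the monotone inheritance of the numerical condition along the induction, which ultimately rests on the elementary inequality $(s-1)\left(\frac{m}{2}-1\right)\ge 0$ valid for $m\ge 2$; I would isolate this as the lemma that guarantees the hypothesis survives each step.
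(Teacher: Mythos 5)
Your argument is correct, and it takes a route that is structurally different from the paper's even though both run on the same engine (projection from a subspace of near-maximal weight plus the Blokhuis--Lavrauw bound for scattered subspaces). The paper handles general $s$ in one step: it first invokes \cite[Proposition 2.6]{BCMT} to get that $U$ is $(h-s+1,r-s+1)_q$- and $(h-s,r-s)_q$-evasive, then supposes an $(h-s)$-dimensional subspace of weight exactly $r-s$ exists, projects onto a $(k-h+s)$-dimensional complement, and uses the hypothesis to find a point of weight at least $2$ in the (non-scattered) image, which lifts to an $(h-s+1)$-dimensional subspace of weight at least $r-s+2$, a contradiction. Your base case $s=1$ is the same projection argument made self-contained: you do not need the external evasiveness reductions because the regime $w_0\ge r$ is killed directly by the degeneracy of $\bar U$. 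The genuinely different part is the induction on $s$, and it buys something real: I checked your bookkeeping (the condition at the $j$-th application of the base case is $n-r+2j-1>\frac{(k-h+j)m}{2}$, which follows from the hypothesis because $(s-j)\left(\frac{m}{2}-1\right)+(j-1)\ge 0$ for $m\ge 2$), and the iteration in fact delivers the stronger conclusion that $U$ is $(h-s,r-2s)_q$-evasive. So the proposition as stated is not tight for $s\ge 2$; the paper only ever uses $s=1$, where the two conclusions coincide. Two minor remarks: your standing assumption $m\ge 2$ is not a restriction you are adding but one the statement already needs --- the scattered bound $\dim_{\fq}(W)\le \frac{Nm}{2}$ fails for $m=1$ and the paper's proof relies on it just as much, only implicitly; and your caveat about the $s$-scattered bound (denominator $s+1$) being too weak concerns a naive single-quotient strategy that the paper does not actually attempt, since it too projects from an $(h-s)$-dimensional subspace and only ever uses the $1$-scattered bound on the image.
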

\begin{proof}
By \cite[Proposition 2.6]{BCMT}, the $\Fmk$ system $U$ is $(h-s+1,r-s+1)_q$-evasive and $(h-s,r-s)_q$-evasive.
Let $H$ be an $(h-s)$-dimensional $\Fqm$-subspace of $V(k,q^m)$ such that $\dim_{\fq} (H\cap U)=r-s$. Let us consider the projection $\pi_H:V(k,q^m)\rightarrow \Lambda$, where  $\Lambda \subseteq V(k,q^m)$ is a $(k-h+s)$-dimensional $\Fqm$-subspace  with $\Lambda \cap H=\{{0}\}$. In this way, $\pi_H(U)$ is an $\fq$-subspace of $\Lambda$ of $\fq$-dimension $n-r+s$. By the assumption on the parameters, $\pi_H(U)$ is not scattered, hence there exists a point $P=\langle v\rangle_{\Fm}$ with $\wt_{\pi_{H}(U)}(P)\ge 2$. This implies that the $(h-s+1)$-dimensional $\Fqm$-subspace spanned by $H$ and $\pi_H^{-1}(P)$ has weight at least $r-s+2$ in $U$, a contradiction. 
\end{proof}

Also, we have the following result.

\begin{prop}\label{prop:evasive1}
If $U$ is an $(h,r)_q$-evasive $\Fmk$ system with $r<hm$, then $U$ is $(2h,r+hm-1)_q$-evasive.
\end{prop}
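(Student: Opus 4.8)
The plan is to argue by contradiction and reduce the statement to the $(h,r)_q$-evasiveness hypothesis. Suppose $U$ is \emph{not} $(2h,r+hm-1)_q$-evasive. Then there is a $2h$-dimensional $\Fm$-subspace $W$ of $V(k,q^m)$ with $\dim_{\fq}(U\cap W)\geq r+hm$. Writing $V:=U\cap W$ and $N:=\dim_{\fq}(V)\geq r+hm$, it suffices to produce a single $h$-dimensional $\Fm$-subspace $H\subseteq W$ with $\dim_{\fq}(U\cap H)=\dim_{\fq}(V\cap H)\geq r+1$, since this contradicts the assumption that $U$ is $(h,r)_q$-evasive. Note that the naive projection bound only gives $\dim_{\fq}(V\cap H)\geq N-hm\geq r$ for \emph{every} such $H$, which is one short of what we need; the gain of one dimension will come from an averaging argument rather than from any single projection.

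The key step is to double count the incidences between the nonzero vectors of $V$ and the $h$-dimensional $\Fm$-subspaces of $W$. On one hand, each nonzero $v\in V$ spans a point $\langle v\rangle_{\Fm}$, and the number of $h$-dimensional $\Fm$-subspaces of $W$ containing it equals the number of $(h-1)$-dimensional subspaces of $W/\langle v\rangle_{\Fm}\cong V(2h-1,q^m)$, that is $\binom{2h-1}{h-1}_{q^m}$. On the other hand, each $h$-dimensional subspace $H$ contains exactly $q^{\wt_U(H)}-1$ nonzero vectors of $V$, where $\wt_U(H)=\dim_{\fq}(V\cap H)$. Equating the two counts yields
\begin{equation*}
(q^N-1)\,\binom{2h-1}{h-1}_{q^m}=\sum_{H}\left(q^{\wt_U(H)}-1\right),
\end{equation*}
the sum being over all $\binom{2h}{h}_{q^m}$ subspaces $H$.

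Now assume toward a contradiction that $\wt_U(H)\leq r$ for every such $H$. Bounding the right-hand side by $\binom{2h}{h}_{q^m}(q^r-1)$ and using the routine Gaussian-binomial identity $\binom{2h}{h}_{q^m}=\frac{q^{2hm}-1}{q^{hm}-1}\binom{2h-1}{h-1}_{q^m}=(q^{hm}+1)\binom{2h-1}{h-1}_{q^m}$, the displayed equation gives, after cancelling the positive factor $\binom{2h-1}{h-1}_{q^m}$, the inequality $q^N-1\leq(q^{hm}+1)(q^r-1)$. But $(q^{hm}+1)(q^r-1)=q^{r+hm}-1-(q^{hm}-q^r)<q^{r+hm}-1\leq q^N-1$, the strict inequality being exactly where the hypothesis $r<hm$ is used. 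This contradiction forces some $H$ to satisfy $\wt_U(H)\geq r+1$, as required. I expect the only delicate points to be bookkeeping — identifying the two counts correctly and verifying the ratio $\binom{2h}{h}_{q^m}/\binom{2h-1}{h-1}_{q^m}=q^{hm}+1$ — while the conceptual core, and the sole place the condition $r<hm$ is indispensable, is the final strict inequality that upgrades the projection bound from $r$ to $r+1$.
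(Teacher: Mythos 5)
Your proof is correct. It is the same kind of counting argument as the paper's, and it arrives at the identical final inequality $q^N-1\leq(q^{hm}+1)(q^r-1)$, which fails precisely because $r<hm$; the only difference is the mechanism used to reach it. The paper partitions the nonzero vectors of the offending $2h$-dimensional subspace by an $h$-spread (a partition into $q^{hm}+1$ pairwise disjoint $h$-dimensional $\Fm$-subspaces, which exists by field reduction) and bounds each spread element by $q^r-1$ nonzero vectors of $U$; you instead average over \emph{all} $h$-dimensional subspaces via a double count of incidences, using the ratio $\binom{2h}{h}_{q^m}/\binom{2h-1}{h-1}_{q^m}=q^{hm}+1$. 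Your version trades the (standard but not entirely free) existence of a spread for a routine Gaussian-binomial identity; the paper's version avoids the binomial bookkeeping entirely. Both are valid, and neither is meaningfully more general here. One small stylistic point: your closing ``assume toward a contradiction that $\wt_U(H)\leq r$ for every such $H$'' is not a new assumption but simply the $(h,r)_q$-evasiveness hypothesis restated, so the argument could be phrased as a direct contradiction with that hypothesis rather than a nested one.
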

\begin{proof}
By way of contradiction suppose that there exists a $(2h)$-dimensional $\Fqm$-subspace $H\subseteq V(k,q^m)$ with  $\wt_{U}(H)\geq r+hm$. Then each point of $H$ has weight at least $1$ in $U$ and $H$ can be partitioned in $q^{hm}+1$ $\Fqm$-subspaces of dimension $h$. Since $U$ is $(h,r)_q$-evasive, the maximum number of vectors of $U$ contained in $H$ is \[(q^r-1)(q^{hm}+1)+1,\] which is less than $q^{r+hm}$, a contradicion.
\end{proof}

\subsection{Linear cutting blocking sets}

We start by generalizing the result in Proposition \ref{prop2} to the case of arbitrary $k \geq 3$.
Furthermore, it is natural to ask whether the converse of Proposition \ref{prop2} is true, or at least under which condition it holds. Here we give a complete answer to this question.

\begin{theorem}\label{Thm:caract}
Let $U$ be an $\Fmk$ system. Then,  $U$ is $(k-2,n-m-1)_q$-evasive if and only if it is cutting.
\end{theorem}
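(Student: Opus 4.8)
The plan is to prove both implications by contraposition, in each case passing to a fixed codimension-$2$ subspace $W$ and analyzing the quotient $V(k,q^m)/W$, which is $2$-dimensional over $\Fm$. Throughout I will use the elementary dimension bound: for any $\Fm$-hyperplane $H$ of $V(k,q^m)$ one has $\dim_\fq(U\cap H)\ge \dim_\fq U + \dim_\fq H - \dim_\fq V = n-m$, since $\dim_\fq H = (k-1)m$ and $\dim_\fq V = km$.

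First I would show that if $U$ is not cutting then it is not $(k-2,n-m-1)_q$-evasive. If $U$ is not cutting, there is a hyperplane $H$ with $\langle U\cap H\rangle_{\Fm} =: W' \subsetneq H$. Choosing any codimension-$2$ subspace $W$ with $W'\subseteq W \subseteq H$, the chain $U\cap H \subseteq W' \subseteq W \subseteq H$ forces $U\cap W = U\cap H$. Hence $\wt_U(W) = \dim_\fq(U\cap H) \ge n-m$ by the bound above, so $U$ admits a $(k-2)$-dimensional $\Fm$-subspace of weight at least $n-m > n-m-1$, and it fails to be $(k-2,n-m-1)_q$-evasive. This establishes the implication (evasive $\Rightarrow$ cutting).

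For the converse I would show that if $U$ is not $(k-2,n-m-1)_q$-evasive then it is not cutting, and this step carries the real content. Assume there is a $(k-2)$-dimensional $\Fm$-subspace $W$ with $d := \dim_\fq(U\cap W)\ge n-m$. Consider the image $\overline U := (U+W)/W \cong U/(U\cap W)$ inside $V(k,q^m)/W \cong \Fm^2$; it has $\dim_\fq \overline U = n-d \le m$, so $\overline U$ contains exactly $q^{n-d}-1 \le q^m - 1$ nonzero vectors. The $\Fm$-hyperplanes $H$ with $W\subseteq H$ correspond bijectively to the $q^m+1$ lines $H/W$ of the quotient, i.e.\ to the points of $\PG(1,q^m)$, and each nonzero vector of $\overline U$ lies on exactly one such line. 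Since $q^{n-d}-1 \le q^m-1 < q^m+1$, a pigeonhole count yields a line $L=H/W$ with $\overline U \cap L = \{0\}$, which is equivalent to $U\cap H = U\cap W$. For this $H$ we then get $\langle U\cap H\rangle_{\Fm} = \langle U\cap W\rangle_{\Fm} \subseteq W \subsetneq H$, so $U$ is not cutting.

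The only delicate point is the dictionary relating the three descriptions of a hyperplane $H\supseteq W$ — as a subspace of $V(k,q^m)$, as a line of the quotient $\Fm^2$, and via the condition $U\cap H = U\cap W$ — together with the verification that $\overline U\cap(H/W)=\{0\}$ is exactly the statement $U\cap H=U\cap W$. Once this is in place, both the pigeonhole count and the spanning conclusion are immediate. I expect the quotient reduction to $\Fm^2$ to be the conceptual crux, since it converts the cutting condition, which quantifies over all hyperplanes, into a scattering-type condition on a $2$-dimensional space where the line count $q^m+1$ strictly exceeds the nonzero-vector count $q^{n-d}-1$.
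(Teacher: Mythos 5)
Your proof is correct and follows essentially the same route as the paper's: the forward direction is the same Grassmann dimension count, and your pigeonhole on the $q^m+1$ lines of the quotient $V(k,q^m)/W$ is exactly the paper's count of the vectors of $U$ distributed among the $q^m+1$ hyperplanes through the codimension-$2$ subspace, just transported to the quotient. The quotient formulation is arguably a cleaner packaging, but the underlying argument is identical.
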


\begin{proof}
 ($\Rightarrow$) Assume by contradiction that $U$ is not cutting. Then, there exists an  $\Fm$-hyperplane $H$  of $V(k,q^m)$ such that $\langle H\cap U\rangle_{\Fm}\subseteq M$, where $\dim_{\Fm}(M)=k-2$. In particular, we have $M\cap U\supseteq H\cap U$, and $\dim_{\fq}(H\cap U)\geq \dim_{\fq}(H)+\dim_{\fq}(U)-km=(k-1)m+n-km=n-m$. However, this contradicts the hypothesis that $U$ is $(k-2,n-m-1)_q$-evasive.
 
 ($\Leftarrow$) By way of contradiction, suppose that there exists a $(k-2)$-dimensional $\F_{q^m}$-subspace, say $M$, of $V(k,q^m)$ such that \[\dim_{\fq} (M\cap U)=h\geq n-m.\]
Since $U$ is cutting, each of the $q^m+1$ hyperplanes through $M$ has $q^t>q^h$ vectors in U. It follows that \[q^{n-m}(q-1)(q^m+1)+q^{n-m}\leq (q^t-q^h)(q^m+1)+q^h \leq |U|=q^n,\] a contradiction.
\end{proof}

From Theorem \ref{Thm:caract} we can actually characterize minimal rank-metric codes in terms of their second generalized rank weight. Even though it directly follows from Theorem \ref{Thm:caract} and Theorem \ref{thm:charact_evasive_genweights}, the following result will be named as a theorem, due to its importance and conciseness.

\begin{theorem}\label{thm:minimal_secondweight}
 Let $\mC$ be a nondegenerate $\Fmkdd$ code. Then, $\mC$ is minimal if and only if $d_2\geq m+1$.
\end{theorem}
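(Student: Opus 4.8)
The plan is to derive Theorem~\ref{thm:minimal_secondweight} as a direct consequence of the two results already established in the excerpt, namely Theorem~\ref{Thm:caract} (cutting $\Leftrightarrow$ $(k-2,n-m-1)_q$-evasive) and Theorem~\ref{thm:charact_evasive_genweights} (the dictionary between evasiveness of $U$ and generalized rank weights of $\mC$), together with the equivalence minimal $\Leftrightarrow$ cutting from \cite[Corollary 5.7]{ABNR}. The whole argument is a translation between three equivalent languages, so there is no genuinely hard analytic step; the care lies entirely in matching parameters correctly.

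First I would fix an associated $\Fmk$ system $U \in \Phi([\mC])$. By \cite[Corollary 5.7]{ABNR}, $\mC$ is minimal if and only if $U$ is a linear cutting blocking set. By Theorem~\ref{Thm:caract}, $U$ is cutting if and only if $U$ is $(k-2,n-m-1)_q$-evasive. Thus it remains only to show that $U$ being $(k-2,n-m-1)_q$-evasive is equivalent to $d_2 \geq m+1$. For this I would invoke Theorem~\ref{thm:charact_evasive_genweights} with the parameter choice $h = k-2$ and $r = n-m-1$: item~(1) states that $U$ is $(k-2,n-m-1)_q$-evasive if and only if, by item~(2), $\dd_{\rk,k-h}(\mC) \geq n-r$, i.e. $\dd_{\rk,2}(\mC) \geq n-(n-m-1) = m+1$, which is exactly $d_2 \geq m+1$.

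One point I would be careful about is checking that the parameter $h=k-2$ is admissible in the definition of $(h,r)_q$-evasive, which requires $h<k$; this holds since $k-2<k$ (and the statement of Proposition~\ref{prop1} guarantees $k\ge 2$ is the regime of interest, with the substantive case being $k\ge 3$ so that $h=k-2\ge 1$). I would also note that nondegeneracy of $\mC$ is the hypothesis ensuring $\langle U\rangle_{\Fm}=V(k,q^m)$, so that $U$ is genuinely a $q$-system and Theorem~\ref{thm:charact_evasive_genweights} applies; this is what the word ``nondegenerate'' in the statement is securing.

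The main (and essentially only) obstacle is purely bookkeeping: making sure the substitution $n-r = m+1$ lines up with the ``$(h,r)$ but not $(h,r-1)$'' refinement in Theorem~\ref{thm:charact_evasive_genweights}, so that the bound $d_2\ge m+1$ is an equivalence rather than a one-sided implication. Since Theorem~\ref{thm:charact_evasive_genweights}(1)$\Leftrightarrow$(2) is already a biconditional for each fixed pair $(h,r)$, the equivalence $d_2\ge m+1 \Leftrightarrow U\text{ is }(k-2,n-m-1)_q\text{-evasive}$ follows immediately, and chaining it with Theorem~\ref{Thm:caract} and the minimality criterion completes the proof with no computation left to perform.
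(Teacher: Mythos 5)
Your proof is correct and follows exactly the route the paper intends: the paper itself states that Theorem \ref{thm:minimal_secondweight} ``directly follows'' from Theorem \ref{Thm:caract} and Theorem \ref{thm:charact_evasive_genweights} (together with the minimal $\Leftrightarrow$ cutting correspondence from \cite[Corollary 5.7]{ABNR}), which is precisely the chain of equivalences you spell out with $h=k-2$, $r=n-m-1$. Your parameter bookkeeping $\dd_{\rk,2}(\mC)\geq n-(n-m-1)=m+1$ is accurate, so nothing is missing.
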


Observe that  Theorem \ref{thm:minimal_secondweight} generalizes the result obtained in \cite[Corollary 6.19]{ABNR}, where the same equivalence was provided only for a special set of  parameters.

We know that there are some cases in which the lower bound in Proposition \ref{prop1} is reached. Indeed, there exists a scattered $\F_q$-subspace of $V(3,q^5)$ of dimension 7 \cite{BCMT} and hence a linear cutting blocking set of $V(3,q^5)$ of minimal dimension.
However, in general, the lower bound is not met for every set of parameters.
\begin{cor}\label{cor:nonexistence}
 If $m<(k-1)^2$ then there are no  cutting $[m+k-1,k]_{q^m/q}$ systems.
\end{cor}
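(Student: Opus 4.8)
The plan is to exploit the identification provided by Theorem \ref{Thm:caract} between cutting systems and a boundary case of evasive subspaces, and then to invoke the known dimension bound for scattered subspaces to extract a numerical constraint relating $m$ and $k$. The whole argument is a short composition of results already available in the excerpt.

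First I would specialize the characterization of Theorem \ref{Thm:caract}. For a $[m+k-1,k]_{q^m/q}$ system the length is $n=m+k-1$, so the evasiveness parameter appearing in that theorem becomes $n-m-1=k-2$. Hence a cutting $[m+k-1,k]_{q^m/q}$ system is exactly a $(k-2,n-m-1)_q=(k-2,k-2)_q$-evasive system, which by definition is a $(k-2)$-scattered subspace of $V(k,q^m)$. This step is where the peculiar value $(k-1)^2$ secretly enters: it is the arithmetic coincidence $n-m-1=k-2$ that collapses the evasive condition into genuine scatteredness.

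Next I would apply the scattered bound recorded just after the definition of maximum scattered subspaces (from \cite[Theorem 2.3]{CsMPZ2019}): an $h$-scattered subspace of $V(k,q^m)$ has $\fq$-dimension at most $\frac{km}{h+1}$. Taking $h=k-2$ gives $\dim_{\fq}(U)\leq \frac{km}{k-1}$. Since $\dim_{\fq}(U)=n=m+k-1$, the existence of such a $U$ forces $m+k-1\leq \frac{km}{k-1}$. Clearing the denominator, $(m+k-1)(k-1)\leq km$, and expanding the left-hand side as $mk-m+(k-1)^2$, this reduces precisely to $(k-1)^2\leq m$.

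Therefore a cutting $[m+k-1,k]_{q^m/q}$ system can exist only when $m\geq (k-1)^2$; contrapositively, if $m<(k-1)^2$ there is none, which is the assertion. I do not expect a genuine obstacle here: the proof is a direct chaining of the equivalence in Theorem \ref{Thm:caract} with a standard scattered-subspace upper bound, and the only point requiring care is verifying the identity $n-m-1=k-2$ that makes the two evasive parameters coincide, together with the routine algebraic simplification $(m+k-1)(k-1)\leq km \Longleftrightarrow (k-1)^2\leq m$.
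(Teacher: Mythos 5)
Your proof is correct and follows essentially the same route as the paper: Theorem \ref{Thm:caract} turns a cutting $[m+k-1,k]_{q^m/q}$ system into a $(k-2)$-scattered subspace (since $n-m-1=k-2$), and the bound $\dim_{\fq}(U)\leq \frac{km}{k-1}$ from \cite[Theorem 2.3]{CsMPZ2019} then forces $m\geq (k-1)^2$. The only difference is that you spell out the algebraic simplification, which the paper leaves implicit.
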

\begin{proof}
 If $U$ is a cutting $[m+k-1,k]_{q^m/q}$ system, by Theorem \ref{Thm:caract}, $U$ is  $(k-2)$-scattered and by \cite[Theorem 2.3]{CsMPZ2019}, $\dim_{\fq}(U)\leq \frac{km}{k-1}$, which contradicts our assumptions.
\end{proof}

If we instead restrict to the case that $m$ is equal to $(k-1)^2$, we have that linear cutting blocking sets {of dimension $m+k-1$} are equivalent to {maximum} $(k-2)$-scattered $q$-systems. More precisely, we have the following.

\begin{cor}
 Let $U$ be a $[k(k-1),k]_{q^{(k-1)^2}/q}$ system and let $\cC$ be a code associated to $U$. The following are equivalent.
 \begin{enumerate}[label=(\arabic*)]
     \item $U$ is {maximum} $(k-2)$-scattered.
     \item $U$ is cutting.
     \item $\cC$ is MRD.
     \item $\cC$ is minimal.
 \end{enumerate}
\end{cor}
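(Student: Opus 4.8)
The plan is to exploit the fact that the three parameters collapse very conveniently. Writing $m=(k-1)^2$ and $n=k(k-1)$, one has $n=m+k-1$, so that $n-m-1=k-2$, and moreover $\tfrac{km}{k-1}=k(k-1)=n=\dim_{\fq}U$. This last identity shows that for a system of this fixed dimension the notions ``$(k-2)$-scattered'' and ``maximum $(k-2)$-scattered'' coincide: by \cite[Theorem 2.3]{CsMPZ2019} any $(k-2)$-scattered system has $\fq$-dimension at most $\tfrac{km}{k-1}=n$, so since $\dim_{\fq}U=n$ such a $U$ automatically attains the bound. I would therefore reduce statement (1) to the plain scattered condition and organise everything around the single object ``$U$ is $(k-2)$-scattered'', i.e.\ $(k-2,k-2)_q$-evasive.

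First I would dispatch (1)$\Leftrightarrow$(2) and (2)$\Leftrightarrow$(4). For the former, Theorem \ref{Thm:caract} states that $U$ is cutting exactly when it is $(k-2,n-m-1)_q$-evasive; since $n-m-1=k-2$ this is precisely $(k-2,k-2)_q$-evasive, hence $(k-2)$-scattered, hence (by the dimension count above) maximum $(k-2)$-scattered. The equivalence (2)$\Leftrightarrow$(4) is immediate from the characterisation of minimal codes via cutting systems recalled in the excerpt (\cite[Corollary 5.7]{ABNR}). Thus only the link with the MRD property (3) genuinely requires work.

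The delicate point is (3), and I expect it to be the main obstacle. The ``direct'' translation of the MRD condition goes through $d_1=\dd_{\rk,1}(\cC)$: being MRD means $d_1=(k-1)(k-2)+1$, equivalently (by Theorem \ref{thm:charact_evasive_genweights}, (1)$\Leftrightarrow$(2) with $h=k-1$) that every $\Fqm$-hyperplane meets $U$ in $\fq$-dimension at most $2k-3$. This bound does not visibly force the much stronger $(k-2)$-dimensional bound of $k-2$ required for scatteredness; indeed a naive incidence count of the $(k-2)$-spaces inside a hyperplane only yields the useless estimate $\approx m+k-2$. The trick I would use to bypass this is to pass to the dual code and invoke instead the third equivalence of Theorem \ref{thm:charact_evasive_genweights}. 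With $h=r=k-2$ it reads: $U$ is $(k-2,k-2)_q$-evasive if and only if $\dd_{\rk,1}(\cC^\perp)\ge k$. On the other hand the Singleton bound \eqref{eq:singleton} applied to the $[n,n-k]_{q^m/q}$ code $\cC^\perp$ (using $n>m$, so that the binding term is $n(m-d+1)$ rather than $m(n-d+1)$) gives $d_1^\perp\le m+1-\tfrac{m(n-k)}{n}=k$. Hence $\dd_{\rk,1}(\cC^\perp)\ge k$ is the same as $d_1^\perp=k$, i.e.\ $\cC^\perp$ is MRD, which by the self-duality of the MRD property \cite{delsarte1978bilinear} is equivalent to $\cC$ being MRD.

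Assembling the three blocks closes the cycle of equivalences. The only points demanding care are the parameter bookkeeping in the Singleton bound for $\cC^\perp$ (confirming that it is $n(m-d+1)$ that produces $d_1^\perp\le k$), the standing assumption $k\ge 3$ needed for $(k-2)$-scatteredness to be meaningful, and citing the standard fact that a rank-metric code is MRD if and only if its dual is.
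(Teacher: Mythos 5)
Your proof is correct. The paper states this corollary without proof, treating it as immediate from Theorem \ref{Thm:caract} (for (1)$\Leftrightarrow$(2)), from the cutting/minimal correspondence of \cite{ABNR} (for (2)$\Leftrightarrow$(4)), and from the known fact that maximum $(k-2)$-scattered $[\frac{km}{k-1},k]_{q^m/q}$ systems correspond to MRD codes, proved in \cite{CsMPZ2019} (for (1)$\Leftrightarrow$(3)). Your treatment of (1)$\Leftrightarrow$(2)$\Leftrightarrow$(4) is exactly the intended one, including the two key numerical observations that $n-m-1=k-2$ and that $\dim_{\fq}U=\frac{km}{k-1}$ forces any $(k-2)$-scattered system of these parameters to be maximum. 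For (1)$\Leftrightarrow$(3) you take a genuinely different, self-contained route: rather than quoting the hyperplane-intersection bound for maximum $h$-scattered subspaces (which yields directly that every hyperplane meets $U$ in $\fq$-dimension at most $2k-3$, i.e.\ $d_1=n-2k+3$, the MRD value), you pass to the dual via Theorem \ref{thm:charact_evasive_genweights}(3), observe that the Singleton bound for the $[n,n-k]_{q^m/q}$ code $\mC^\perp$ gives $d_1^\perp\le 1+\frac{mk}{n}=k$ with equality precisely in the MRD case, and then invoke Delsarte's theorem that a code is MRD if and only if its dual is. The parameter bookkeeping checks out: since $n>m$ the binding term is indeed $n(m-d+1)$, $\frac{mk}{n}=k-1$, and $n(m-k+1)=m(n-k)=k(k-1)^2(k-2)$, so $d_1^\perp=k$ is exactly the MRD condition for $\mC^\perp$; moreover the MRD-duality fact is one the paper itself uses in the proof of Proposition \ref{prop:genweights_directsum}. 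The benefit of your route is that it only relies on results already quoted in the paper plus one standard citation, at the cost of a short detour through the dual code; the paper's implicit route is shorter but leans on an external structural theorem about maximum $h$-scattered subspaces.
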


We conclude by providing two inductive constructions of linear cutting blocking sets.

\begin{prop}\label{prop:cuttingnoscatt}
Let $W$ be a linear cutting blocking set in a hyperplane $H$ of $V(k+1,q^m)$ of dimension $t$ and let $\langle v \rangle_{\Fqm}$ be a point not in $H$. Then $U=W+\langle v \rangle_{\Fqm}$ is a cutting $[t+m,k+1]_{q^m/q}$ system.
\end{prop}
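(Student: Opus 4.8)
The plan is to first confirm that $U$ is a genuine $\Fms{t+m}{k+1}{m}$ system, and then to verify the cutting condition one hyperplane at a time. Since $v \notin H$ while $W \subseteq H$, the sum $W + \langle v\rangle_{\Fqm}$ is direct over $\fq$, so $\dim_{\fq}(U) = t + m$. Moreover, $W$ being a cutting blocking set in $H$ forces $\langle W\rangle_{\Fqm} = H$, whence $\langle U\rangle_{\Fqm} = \langle H, v\rangle_{\Fqm} = V(k+1,q^m)$. Thus $U$ spans the whole space and has the claimed $\fq$-dimension.

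For the cutting property I would fix an arbitrary $\Fqm$-hyperplane $K$ of $V(k+1,q^m)$ and show $\langle K \cap U\rangle_{\Fqm} = K$. The easy case is $K = H$: an element $w + \lambda v \in U$ (with $w \in W$, $\lambda \in \Fqm$) lies in $H$ exactly when $\lambda = 0$, so $K \cap U = W$ and $\langle W\rangle_{\Fqm} = H = K$. In the main case $K \neq H$, the subspace $H' := K \cap H$ is a hyperplane of $H$ of $\Fqm$-dimension $k-1$, and since $W \subseteq H$ we have $K \cap W = H' \cap W$; applying the hypothesis that $W$ is cutting in $H$ to the hyperplane $H'$ yields $\langle K \cap U\rangle_{\Fqm} \supseteq \langle H' \cap W\rangle_{\Fqm} = H'$. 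As $\dim_{\Fqm} H' = k-1 = \dim_{\Fqm} K - 1$, it now suffices to exhibit a single vector of $K \cap U$ lying outside $H$: any such vector is automatically outside $H' \subseteq H$, and adjoining it to $H'$ gives a $k$-dimensional subspace of $K$, hence all of $K$.

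To produce this vector I would pass to the $\Fqm$-linear functional $\phi$ with $\ker \phi = K$. If $v \in K$, then $v$ itself lies in $K \cap U$ and outside $H$, and we are done. If $v \notin K$, then $\phi(v) \neq 0$; since $H \neq K$ forces $H \not\subseteq K$ and $\langle W\rangle_{\Fqm} = H$, we get $W \not\subseteq K$, so some $w_0 \in W$ has $\phi(w_0) \neq 0$. Setting $\lambda := -\phi(w_0)/\phi(v) \in \Fqm^\ast$, the vector $w_0 + \lambda v$ lies in $U \cap K$ and, because $\lambda \neq 0$, outside $H$. This settles every case and shows $U$ is cutting.

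The main obstacle is exactly this last step, namely guaranteeing the ``extra dimension'' beyond $H'$; the cutting property of $W$ only delivers $H'$, and one must argue that the point $\langle v\rangle_{\Fqm}$ genuinely contributes. I would also note an alternative, purely dimensional route: from $\dim_{\fq} U + \dim_{\fq} K - \dim_{\fq} V(k+1,q^m) = t$ one gets $\dim_{\fq}(K \cap U) \geq t$, whereas $W \not\subseteq H'$ gives $\dim_{\fq}(W \cap H') \leq t-1$; since $K \cap U \subseteq H$ would force $K \cap U = W \cap H'$, this inequality produces the same contradiction. Either formulation closes the proof.
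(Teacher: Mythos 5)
Your proof is correct and follows essentially the same route as the paper's: for a hyperplane $\Pi\neq H$ one applies the cutting property of $W$ to $\Pi\cap H$ and then finds one vector of $U\setminus W$ in $\Pi$ coming from a line through $\langle v\rangle_{\Fqm}$, which is exactly your functional argument with $w_0+\lambda v$. Your version just spells out the details (including the case $\Pi=H$ and the dimension count) that the paper leaves as ``easy computations.''
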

\begin{proof}
Easy computations show that $U$ is a union of lines through $\langle v \rangle_{\Fqm}$. Let $\Pi$ be a hyperplane of $V(k+1,q^m)$ different from $H$. Since $W$ is cutting, $\langle\Pi\cap H \cap W\rangle_{\fq}=H\cap \Pi$ and there exists at least a line trough $\langle v \rangle_{\Fqm}$ intersecting $\Pi$ in a point defined by a vector of $U\setminus W$. The assertion follows.
\end{proof}

\begin{prop}
Let $U_i$, $i\in\{1,2\}$, be a  cutting $[n_i,k_i]_{q^m/q}$ system of $V_i=V(k_i,q^m)$. If $U_i$ is $(k_i-1,n_i-m-1+t_i)_q$-evasive for each $i\in\{1,2\}$, with $t_1,t_2\geq 0$ and $t_1+t_2<m+2$, then $U_1\oplus U_2$ is a cutting $[n_1+n_2,k_1+k_2]_{q^m/q}$ system.
\end{prop}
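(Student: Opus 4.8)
The plan is to reduce the cutting property to an evasiveness condition via Theorem \ref{Thm:caract}, and then to control a single codimension-$2$ subspace by a rank (fibre-product) computation. Set $V=V_1\oplus V_2=V(k_1+k_2,q^m)$ and $U=U_1\oplus U_2$. First I would note that $U$ is a $[n_1+n_2,k_1+k_2]_{q^m/q}$ system: it has $\fq$-dimension $n_1+n_2$, and $\langle U\rangle_{\Fqm}\supseteq\langle U_1\rangle_{\Fqm}+\langle U_2\rangle_{\Fqm}=V_1+V_2=V$. By Theorem \ref{Thm:caract} it then suffices to prove that $U$ is $(k_1+k_2-2,\,n_1+n_2-m-1)_q$-evasive, i.e.\ that every $\Fqm$-subspace $H\subseteq V$ of codimension $2$ satisfies $\dim_{\fq}(U\cap H)\le n_1+n_2-m-1$.

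Fix such an $H=\ker\lambda_1\cap\ker\lambda_2$ with $\lambda_1,\lambda_2\in V^*$ linearly independent, and let $\alpha_j=\lambda_j|_{V_1}$, $\beta_j=\lambda_j|_{V_2}$, so that $\lambda_j(v_1,v_2)=\alpha_j(v_1)+\beta_j(v_2)$. Introduce the $\fq$-linear map $\Xi\colon U_1\oplus U_2\to\Fqm^2$ sending $(u_1,u_2)$ to $\big(\alpha_1(u_1)+\beta_1(u_2),\,\alpha_2(u_1)+\beta_2(u_2)\big)$, and set $f=(\alpha_1,\alpha_2)|_{U_1}$ and $g=-(\beta_1,\beta_2)|_{U_2}$. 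Then $U\cap H=\ker\Xi$ and $\mathrm{im}\,\Xi=\mathrm{im}\,f+\mathrm{im}\,g$, so rank--nullity yields the key identity
\[\dim_{\fq}(U\cap H)=n_1+n_2-\dim_{\fq}\!\big(\mathrm{im}\,f+\mathrm{im}\,g\big).\]
Hence the whole statement reduces to the single inequality $\dim_{\fq}(\mathrm{im}\,f+\mathrm{im}\,g)\ge m+1$. To prove it I would split into cases according to $c_1:=\dim_{\Fqm}\langle\alpha_1,\alpha_2\rangle_{\Fqm}$ and $c_2:=\dim_{\Fqm}\langle\beta_1,\beta_2\rangle_{\Fqm}$, noting that $\ker f=U_1\cap H_1$ and $\ker g=U_2\cap H_2$, where $H_i:=H\cap V_i$ has codimension $c_i$ in $V_i$, and that $\mathrm{im}\,f$, $\mathrm{im}\,g$ span the $\Fqm$-subspaces $L_1=(\alpha_1,\alpha_2)(V_1)$, $L_2=(\beta_1,\beta_2)(V_2)$ of $\Fqm^{2}$ of dimensions $c_1,c_2$.

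If $c_1=2$ (or symmetrically $c_2=2$), then $H_1$ has codimension $2$ in $V_1$; since $U_1$ is cutting it is $(k_1-2,\,n_1-m-1)_q$-evasive by Theorem \ref{Thm:caract}, giving $\dim_{\fq}(U_1\cap H_1)\le n_1-m-1$ and hence $\dim_{\fq}\mathrm{im}\,f=n_1-\dim_{\fq}(U_1\cap H_1)\ge m+1$, which already forces $\dim_{\fq}(\mathrm{im}\,f+\mathrm{im}\,g)\ge m+1$. The only remaining possibility, and the step I expect to be the main obstacle, is $c_1=c_2=1$: this is precisely where the hypotheses on $t_1,t_2$ must be used, and the bound turns out to be tight. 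Here $H_1,H_2$ are hyperplanes and $\mathrm{im}\,f\subseteq L_1$, $\mathrm{im}\,g\subseteq L_2$ are contained in two $\Fqm$-lines of $\Fqm^2$. A short computation shows that $L_1=L_2$ would produce a nontrivial $\Fqm$-dependence $r\lambda_1=p\lambda_2$, contradicting the independence of $\lambda_1,\lambda_2$; thus $L_1\ne L_2$, the lines meet only in $0$, and the sum $\mathrm{im}\,f+\mathrm{im}\,g$ is direct. Finally the $(k_i-1,\,n_i-m-1+t_i)_q$-evasiveness of $U_i$ gives $\dim_{\fq}(U_i\cap H_i)\le n_i-m-1+t_i$, so $\dim_{\fq}\mathrm{im}\,f\ge m+1-t_1$ and $\dim_{\fq}\mathrm{im}\,g\ge m+1-t_2$, and using $t_1+t_2<m+2$, i.e.\ $t_1+t_2\le m+1$,
\[\dim_{\fq}(\mathrm{im}\,f+\mathrm{im}\,g)=\dim_{\fq}\mathrm{im}\,f+\dim_{\fq}\mathrm{im}\,g\ge 2m+2-(t_1+t_2)\ge m+1,\]
which closes the argument. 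The delicate point is thus entirely concentrated in the case $c_1=c_2=1$, where one needs the non-collinearity $L_1\ne L_2$ to make the two dimensions add and where the threshold $t_1+t_2<m+2$ is exactly what pushes the resulting lower bound up to $m+1$; all other configurations follow directly from the cutting property of the summands.
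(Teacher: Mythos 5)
Your proof is correct, and it takes a genuinely different route from the paper's. You reduce the statement, via Theorem \ref{Thm:caract}, to showing that $U_1\oplus U_2$ is $(k_1+k_2-2,\,n_1+n_2-m-1)_q$-evasive, and then bound $\dim_{\fq}(U\cap H)$ for an \emph{arbitrary} codimension-$2$ subspace $H$ by rank--nullity applied to the pair of defining functionals, with a case split on the codimensions $c_i$ of $H\cap V_i$ in $V_i$. The paper instead argues directly on a hyperplane $\Pi$ of $V_1\oplus V_2$: Grassmann's identity gives $\dim_{\fq}(\Pi\cap U)\ge n_1+n_2-m$, while the evasiveness hypotheses bound the intersection of $U$ with the ``split'' hyperplane $(\Pi\cap V_1)\oplus(\Pi\cap V_2)$ of $\Pi$ by $n_1+n_2-2m-2+t_1+t_2<n_1+n_2-m$, whence $\Pi\cap U$ is not contained in that subspace. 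Your version buys two things: it makes transparent where each hypothesis enters (the cutting property of the factors disposes of the cases $\max(c_1,c_2)=2$; the $(k_i-1,n_i-m-1+t_i)_q$-evasiveness together with $t_1+t_2\le m+1$ and the non-collinearity $L_1\ne L_2$ of the two image lines handles $c_1=c_2=1$), and it explicitly covers the codimension-$2$ subspaces that are not of the split form $(M\cap V_1)\oplus(M\cap V_2)$, which the paper's argument treats only implicitly. The single cosmetic omission is the remark that $c_1+c_2\ge 2$ (immediate from the $\Fqm$-independence of $\lambda_1,\lambda_2$), which is what guarantees that $c_1=c_2=1$ is indeed the only case remaining.
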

\begin{proof}
Let $\Pi$ be a hyperplane of $V_1\oplus V_2$. Then $\dim_{\fq}(\Pi)=(k_1+k_2-1)m$. Also, $\dim_{\fq}(\Pi\cap U_i)\leq n_i+m-1$ and
\begin{align*}
    \dim_{\fq}(\Pi\cap (U_1\oplus U_2))&\geq \dim_{\fq}(\Pi)+\dim_{\fq}(U_1\oplus U_2)-\dim_{\fq}(V_1\oplus V_2)\\
    &= (k_1+k_2-1)m+n_1+n_2-(k_1+k_2)m=n_1+n_2-m.
\end{align*}
Since $(\Pi\cap V_1)\oplus (\Pi\cap V_2)$ is a hyperplane of $H$ and $t_1+t_2<m+2$, we get 
\[\dim_{\fq}\left((\Pi\cap V_1)\oplus (\Pi\cap V_2)\right)\leq n_1-m-1-t_1+n_2-m-1-t_2<n_1+n_2-m,\] and hence the assertion.
\end{proof}

	\subsection{Dual of a linear cutting blocking set}
	
	Let $\sigma: V\times V\longrightarrow \Fm$ be a nondegenerate
	bilinear form on $V=V(k,q^m)$ and define
%	\begin{equation}\label{form:traccia}
	\[ \begin{array}{rccl}
		\sigma' \colon & V\times V &\longrightarrow & \F_q,\\
       &({u}, { v}) 	& \longmapsto &	\Tr_{q^m/q}(\sigma({u}, {v})),
	\end{array} \]
%	\end{equation}
	where $\Tr_{q^m/q}$ denotes the trace function of $\F_{q^m}$ over $\F_q$.
	Then $\sigma'$ is a nondegenerate bilinear form on
	$V$, when $V$ is  regarded as a $km$-dimensional vector space  over
	$\fq$. Let $\tau$ and $\tau'$ be the orthogonal complement maps
	defined by $\sigma$ and $\sigma'$ on the lattices  of the
	$\Fm$-subspaces and $\fq$-subspaces of $V$, respectively.
	Recall that  if $R$ is an
	$\Fm$-subspace of $V$ and $U$ is an $\fq$-subspace of $V$
	then $U^{\tau'}$ is an $\fq$-subspace of $V$, $\dim_{\Fm}(R^\tau)+\dim_{\Fm}(R)=k$, and
	$\dim_{\fq}(U^{\tau'})+\dim_{\fq}(U)= km$. It easy to see
	that $R^\tau=R^{\tau'}$ for each $\Fm$-subspace $R$ of $V$. For a more detailed explanation, we refer to \cite[Chapter 7]{Taylor}).
	
	With the notation above, $U^{\tau '}$ is called the \textbf{dual} of $U$ (with respect to $\tau'$). 
	Up to $\mathrm{GL}(k,q^m)$-equivalence, the dual of an $\fq$-subspace of $V$ does not depend on the choice of the nondegenerate bilinear forms $\sigma$ and $\sigma'$ on $V$. For more details see \cite{Polverino}. If $R$ is an $s$-dimensional $\Fm$-subspace of $V$ and $U$ is a $t$-dimensional $\fq$-subspace of $V$, then
	\begin{equation}\label{pesi}
		\dim_{\fq}(U^{\tau'}\cap R^\tau)-\dim_{\fq}(U\cap R)=km-t-sm.
	\end{equation}
	From \eqref{pesi} and from Theorem \ref{Thm:caract} the next results immediately follow.
	
	\begin{prop}
	Let $U$ be a  scattered $[\frac{km}{2},k]_{q^m/q}$ system. Then the following are equivalent:
	\begin{enumerate}[label=(\arabic*)]
	    \item $U$ is $(2,m-1)_q$-evasive,
	    \item  $U^{\tau'}$ is a $\left(k-2,\frac{km}{2}-m-1\right)_q$-evasive $[\frac{km}{2},k]_{q^m/q}$ system,% of dimension $rn/2$,
	    \item  $U^{\tau'}$ is a cutting $[\frac{km}{2},k]_{q^m/q}$ system. % of dimension $rn/2$.
	\end{enumerate}
	\end{prop}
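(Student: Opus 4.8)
The plan is to prove the cyclic chain by splitting it into $(2)\Leftrightarrow(3)$, handled by Theorem \ref{Thm:caract}, and $(1)\Leftrightarrow(2)$, handled by the duality formula \eqref{pesi}. Before starting I would record two preliminary facts. First, since $\dim_{\fq}(U)=\frac{km}{2}$, the complementarity relation $\dim_{\fq}(U^{\tau'})+\dim_{\fq}(U)=km$ gives $\dim_{\fq}(U^{\tau'})=\frac{km}{2}$ as well, so both systems share the same length. Second, I would verify that $U^{\tau'}$ is genuinely a $[\frac{km}{2},k]_{q^m/q}$ system, that is, that $\langle U^{\tau'}\rangle_{\Fm}=V$: were $U^{\tau'}$ contained in an $\Fm$-hyperplane $H$, applying the (inclusion-reversing, involutive) map $\tau'$ would force the point $H^{\tau'}=H^{\tau}$ to lie inside $U$, contradicting the fact that $U$, being scattered, meets every point in $\fq$-dimension at most $1<m$. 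This is the only place where the scattered hypothesis enters, and it is what makes statements $(2)$ and $(3)$ well posed.

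For $(2)\Leftrightarrow(3)$ I would simply apply Theorem \ref{Thm:caract} to the system $U^{\tau'}$, whose length is $n'=\frac{km}{2}$. Since the evasiveness threshold appearing in $(2)$ equals $\frac{km}{2}-m-1=n'-m-1$, the theorem states verbatim that $U^{\tau'}$ is $(k-2,n'-m-1)_q$-evasive if and only if it is cutting, which is exactly the equivalence $(2)\Leftrightarrow(3)$.

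The core of the argument is $(1)\Leftrightarrow(2)$, where I would feed \eqref{pesi} with an $s=2$ dimensional $\Fm$-subspace $R$ and $t=\dim_{\fq}(U)=\frac{km}{2}$. The formula then reads
\[\dim_{\fq}(U^{\tau'}\cap R^\tau)-\dim_{\fq}(U\cap R)=km-\tfrac{km}{2}-2m=\tfrac{km}{2}-2m,\]
so the two intersection dimensions differ by the constant $\frac{km}{2}-2m$, independently of $R$. As $R$ ranges over all $2$-dimensional $\Fm$-subspaces, $R^{\tau}=R^{\tau'}$ ranges bijectively over all $(k-2)$-dimensional $\Fm$-subspaces. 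Hence the bound $\dim_{\fq}(U\cap R)\leq m-1$ for every such $R$ is equivalent to $\dim_{\fq}(U^{\tau'}\cap R^{\tau})\leq (m-1)+\frac{km}{2}-2m=\frac{km}{2}-m-1$ for every $(k-2)$-dimensional subspace, which is precisely statement $(2)$.

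I do not expect a serious obstacle: once $\dim_{\fq}(U^{\tau'})=\frac{km}{2}$ and the spanning property of $U^{\tau'}$ are established, everything reduces to the arithmetic of \eqref{pesi} together with the bijectivity of $R\mapsto R^{\tau}$, and to a direct citation of Theorem \ref{Thm:caract}. The only step demanding a moment of care is the spanning check for $U^{\tau'}$, since that is the sole role of the scattered assumption; the remaining manipulations are routine bookkeeping with the duality formula.
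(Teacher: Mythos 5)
Your proposal is correct and follows exactly the route the paper intends: the paper gives no written proof beyond remarking that the result ``immediately follows'' from the duality formula \eqref{pesi} and Theorem \ref{Thm:caract}, which is precisely your decomposition into $(1)\Leftrightarrow(2)$ via \eqref{pesi} with $s=2$, $t=\frac{km}{2}$ and $(2)\Leftrightarrow(3)$ via Theorem \ref{Thm:caract} applied to $U^{\tau'}$. Your additional check that the scattered hypothesis guarantees $\langle U^{\tau'}\rangle_{\Fm}=V$ is a sensible piece of diligence that the paper leaves implicit.
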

	\begin{cor}
		Let $U$ be a  scattered $[2m,4]_{q^m/q}$ system. Then the following are equivalent:
	\begin{enumerate}[label=(\arabic*)]
	\item $U$ is cutting,
	    \item $U$ is $(2,m-1)_q$-evasive,
	    \item  $U^{\tau'}$ is a  cutting $[2m,4]_{q^m/q}$ system,
	    \item  $U^{\tau'}$ is a $(2,m-1)_q$-evasive  $[2m,4]_{q^m/q}$ system.
	\end{enumerate}
	\end{cor}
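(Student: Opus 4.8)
The plan is to obtain this Corollary as the case $k=4$ of the preceding Proposition, glued to Theorem~\ref{Thm:caract}. First I would substitute $k=4$ into that Proposition and simplify the parameters: one has $\tfrac{km}{2}=2m$, $k-2=2$, and $\tfrac{km}{2}-m-1=m-1$. After these substitutions its three conditions become, verbatim, conditions (2), (4) and (3) of the Corollary respectively: the Proposition's hypothesis ``$U$ scattered $[\tfrac{km}{2},k]_{q^m/q}$'' is exactly the hypothesis of the Corollary, its item (1) is the Corollary's (2), its item (2) is the Corollary's (4), and its item (3) is the Corollary's (3). Hence the Proposition delivers (2)$\Leftrightarrow$(3)$\Leftrightarrow$(4) directly; the genuine content here is the duality identity \eqref{pesi} that powers that Proposition.

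It then remains only to attach condition (1). Since $U$ is scattered, it is in particular a $[2m,4]_{q^m/q}$ system, so Theorem~\ref{Thm:caract} applies with $n=2m$ and $k=4$: there $U$ is cutting if and only if it is $(k-2,n-m-1)_q$-evasive, and the parameters again collapse to $(k-2,n-m-1)=(2,m-1)$. This gives (1)$\Leftrightarrow$(2), and chaining with the three equivalences above yields the equivalence of all four statements, completing the proof.

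There is no real obstacle beyond bookkeeping, since all the substance is imported from the preceding Proposition and from Theorem~\ref{Thm:caract}. The only point worth a one-line check is that statements (3) and (4) are well-posed, i.e.\ that $U^{\tau'}$ is again a $[2m,4]_{q^m/q}$ system. Its dimension is immediate from the dual-dimension relation $\dim_{\fq}(U^{\tau'})+\dim_{\fq}(U)=km$, giving $\dim_{\fq}(U^{\tau'})=4m-2m=2m$; that $U^{\tau'}$ spans $V(4,q^m)$ follows because $U$ scattered forbids any point from lying inside $U$, which through \eqref{pesi} prevents $U^{\tau'}$ from being contained in an $\Fm$-hyperplane. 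Alternatively one could obtain (3)$\Leftrightarrow$(4) by a second direct application of Theorem~\ref{Thm:caract} to $U^{\tau'}$, but routing everything through the Proposition is cleaner.
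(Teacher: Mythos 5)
Your proof is correct and matches the paper's intent exactly: the paper gives no separate argument for this corollary, stating that it (together with the preceding proposition) follows immediately from the duality relation \eqref{pesi} and Theorem \ref{Thm:caract}, which is precisely the specialization $k=4$ plus the application of Theorem \ref{Thm:caract} to obtain (1)$\Leftrightarrow$(2) that you carry out. Your extra well-posedness check on $U^{\tau'}$ is a reasonable bit of diligence the paper leaves implicit.
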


\subsection{The case $(k,m)=(4,3)$}

We conclude this  section with a focus on a special case, namely  $(k,m)=(4,3)$. In particular, in this  case we will determine the shortest length that a minimal rank-metric code of dimension $k$ can have with respect to the field extension $\Fqm/\fq$. 

Formally, denote by $c_q(k,m)$  the smallest dimension of a linear cutting blocking set in $V(k,q^m)$, or, equivalently, the length of the shortest minimal rank-metric code of dimension $k$, that is
$$c_q(k,m):= \min\{\  n \in \N \,: \mbox{ there exists a cutting } \Fmk \mbox{ system } \}.$$
 From Proposition \ref{prop:cuttingnoscatt} we can  deduce the recursive inequality given by
$$ c_q(k,m)\leq c_q(k-1,m)+m. $$
On the other hand, Proposition \ref{prop1} can be rewritten as
$$c_q(k,m)\geq k+m-1,$$
and we have seen that this is not always an equality; see Corollary \ref{cor:nonexistence}.

In this section we determine the exact value  $c_q(4,3)$  for every prime power $q$.
By Corollary \ref{cor:nonexistence}, we can already deduce that $c_q(4,3)\geq 7$. We will see that actually $c_q(4,3)>7$ and this will be a consequence of the following more general bound on evasive $q$-systems.

\begin{lemma} Let $h \in \mathbb N$ be such that $h\leq k$. Let $U$ be an $(h,h+1)_q$-evasive $\Fmk$ system. Then, for every $1\leq t \leq h-1$, we have
$$n\leq \max\left\{ \frac{km}{h-t+1}, \frac{(k-h+t)m+(h-t+1)(t+1)}{t+1}
\right\}.$$
\end{lemma}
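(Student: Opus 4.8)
The plan is to fix $t$ and to work with the complementary parameter $u:=h-t$, so that $1\le u\le h-1$ and $t+u=h$; I then split into two cases according to whether or not $U$ is $u$-scattered, and the two entries of the maximum will arise precisely from these two cases. If $U$ is $u$-scattered, then by \cite[Theorem 2.3]{CsMPZ2019} one immediately obtains $n=\dim_{\fq}(U)\le \frac{km}{u+1}=\frac{km}{h-t+1}$, which is the first term and requires nothing further.

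So assume $U$ is not $u$-scattered. Then there is a $u$-dimensional $\Fqm$-subspace $W$ with $c:=\dim_{\fq}(U\cap W)\ge u+1$. I would fix an $\Fqm$-complement $\Lambda$ of $W$ in $V(k,q^m)$, so that $\dim_{\Fqm}(\Lambda)=k-u=k-h+t$, and consider the projection $\pi_W\colon V(k,q^m)\to\Lambda$ along $W$. The two standard facts I will use are $n=c+\dim_{\fq}(\pi_W(U))$ (rank--nullity applied to $\pi_W|_U$, whose kernel is $U\cap W$) and $\dim_{\fq}(\pi_W(U)\cap X)=\dim_{\fq}(U\cap(X\oplus W))-c$ for every $\Fqm$-subspace $X\subseteq\Lambda$. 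The target is to prove that $\pi_W(U)$ is $t$-scattered in $\Lambda$; granting this, \cite[Theorem 2.3]{CsMPZ2019} gives $\dim_{\fq}(\pi_W(U))\le \frac{(k-h+t)m}{t+1}$, and then $n\le (u+1)+\frac{(k-h+t)m}{t+1}=\frac{(k-h+t)m+(h-t+1)(t+1)}{t+1}$, which is the second term.

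The crucial point, and the step I expect to be the main obstacle, is controlling $c$ from above. The naive estimate $c\le h+1$ (enlarge $W$ to an $h$-dimensional subspace and invoke $(h,h+1)$-evasiveness) is too weak: it would replace $u+1$ by $h+1$ in the bound, losing exactly the $t$ we need to save. What rescues the argument is that in fact $c=u+1$. To force this I would exploit that $\pi_W(U)$ still spans $\Lambda$ (because $U$ spans $V(k,q^m)$), so I can select $t$ $\Fqm$-linearly independent vectors of $\pi_W(U)$; their $\Fqm$-span $H'$ is a $t$-dimensional subspace of $\Lambda$ with $\dim_{\fq}(\pi_W(U)\cap H')\ge t$. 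Since $H'\oplus W$ is $h$-dimensional, $(h,h+1)$-evasiveness of $U$ yields $t\le \dim_{\fq}(\pi_W(U)\cap H')=\dim_{\fq}(U\cap(H'\oplus W))-c\le (h+1)-c$, whence $c\le u+1$ and therefore $c=u+1$.

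Once $c=u+1$ is established, the $t$-scatteredness follows at once: for every $t$-dimensional $H''\subseteq\Lambda$ one has $\dim_{\fq}(\pi_W(U)\cap H'')=\dim_{\fq}(U\cap(H''\oplus W))-(u+1)\le (h+1)-(u+1)=t$, and the computation of the previous paragraph closes the non-scattered case. Along the way I would check the harmless inequalities that keep every construction meaningful, namely $u=h-t\ge 1$ and $k-h+t\ge t$ (i.e.\ $k\ge h$, which holds by hypothesis), the latter guaranteeing both that the $t$ independent vectors exist in $\Lambda$ and that the scattered bound of \cite[Theorem 2.3]{CsMPZ2019} applies in $\Lambda=V(k-h+t,q^m)$.
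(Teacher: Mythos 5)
Your proof is correct and follows essentially the same route as the paper's: a dichotomy on whether $U$ is $(h-t)$-scattered, followed by projecting from an $(h-t)$-dimensional $\Fqm$-subspace of weight $h-t+1$ in $U$ and applying \cite[Theorem 2.3]{CsMPZ2019} to the resulting $t$-scattered projection. The only minor differences are that the paper phrases the argument as a contradiction and obtains the exact weight $h-t+1$ by invoking the evasiveness monotonicity of \cite[Proposition 2.6]{BCMT}, whereas you derive $c\le h-t+1$ directly from the $(h,h+1)_q$-evasiveness by pulling back a $t$-dimensional subspace spanned by vectors of $\pi_W(U)$.
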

\begin{proof}
By way of contradiction, suppose that  $U$ is an $(h,h+1)_q$-evasive $\Fmk$ system with 
$$n > \max\left\{ \frac{km}{h-t+1}, \frac{(k-h+t)m+(h-t+1)(t+1)}{t+1}
\right\}.$$ Let us fix $1\leq t \leq h-1$.
Since $n>\frac{km}{h-t+1}$, then $U$ cannot be $(h-t)$-scattered, and hence it must be $(h-t,h-t+1)_q$-evasive. Thus, there exists an $\Fm$-subspace $H\subseteq V(k,q^m)$ with $\dim_{\Fm}(H)=h-t$ and of weight $h-t+1$ in $U$.  Let us project $U$ from $H$ over a $(k-h+t)$-dimensional $\Fqm$-subspace $\Lambda$, with $\Lambda \cap H=\{ 0\}$, and let us denote this projection map by $\pi_H:V(k,q^m)\rightarrow \Lambda$. Since $\ker(\pi_H)\cap U=H\cap U$, by restricting the map $\pi_H$ to $U$ we obtain an $\F_q$-subspace $W:=\pi_H(U)\subseteq \Lambda$ that is an $[n-h+t-1,k-h+t]_{q^m/q}$ system. By assumptions on $n$, the $q$-system $W$ is not $t$-scattered and hence there is a $t$-dimensional subspace $H'$ of $\Lambda$ of weight at least $t+1$ in $W$. Therefore, the space $\pi_H^{-1}(H')$ is an $h$-dimensional $\Fm$-subspace and $\pi_H^{-1}(H')\cap U=\pi_H^{-1}(W)$, which has dimension $\dim_{\fq}(W\cap H')+\dim_{\fq}(H\cap U)\geq (h-t+1)+(t+1)=h+2$, a contradiction.
\end{proof}
\begin{cor}\label{cor:nocutting}
There are no  cutting $[7,4]_{q^3/q}$ systems.
\end{cor}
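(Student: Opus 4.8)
The plan is to reduce the statement to a direct application of the Lemma immediately preceding it, using Theorem \ref{Thm:caract} to translate the cutting property into an evasiveness condition. First I would suppose, for contradiction, that a cutting $[7,4]_{q^3/q}$ system $U$ exists, so here $n=7$, $k=4$, and $m=3$. By Theorem \ref{Thm:caract}, a cutting $\Fmk$ system is exactly a $(k-2,n-m-1)_q$-evasive one; substituting the parameters gives $k-2=2$ and $n-m-1 = 7-3-1 = 3$. Hence $U$ is $(2,3)_q$-evasive.

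The key observation is that $(2,3)_q$-evasive is of the form $(h,h+1)_q$-evasive with $h=2$, so the Lemma applies with $h=2 \leq k = 4$. The admissible range for $t$ is $1 \leq t \leq h-1 = 1$, forcing the single choice $t=1$. Plugging $k=4$, $m=3$, $h=2$, $t=1$ into the two bounds gives $\frac{km}{h-t+1} = \frac{12}{2} = 6$ and $\frac{(k-h+t)m + (h-t+1)(t+1)}{t+1} = \frac{3\cdot 3 + 2\cdot 2}{2} = \frac{13}{2}$, so the Lemma yields $n \leq \max\{6, 13/2\} = 13/2$. Since $n$ is an integer this means $n \leq 6$, contradicting $n=7$.

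This is really a one-line corollary once the translation is in place, so there is no genuine technical obstacle; the only point requiring a moment of care is matching the evasiveness parameters to the hypothesis of the Lemma and verifying that the forced value $t=1$ (the sole integer in $[1,h-1]$) produces a max strictly below $7$. I would present the argument exactly in the order above: apply Theorem \ref{Thm:caract} to get $(2,3)_q$-evasiveness, invoke the Lemma with $h=2$, $t=1$, compute the two quantities in the maximum, and conclude $n \leq 6$, which rules out $n=7$.
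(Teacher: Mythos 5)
Your proposal is correct and follows exactly the paper's argument: apply Theorem \ref{Thm:caract} to convert the cutting hypothesis into $(2,3)_q$-evasiveness and then invoke the preceding lemma with $k=4$, $m=3$, $h=2$, $n=7$. You additionally carry out the numerical evaluation of the two bounds (obtaining $\max\{6,13/2\}$, hence $n\leq 6$), which the paper leaves implicit.
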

\begin{proof}
 By Theorem \ref{Thm:caract}, if $U$ is a  cutting $[7,4]_{q^3/q}$ system then $U$ is $(2,3)_q$-evasive. The result follows from the previous lemma with $k=4$, $m=3$, $h=2$, and $n=7$.
\end{proof}

\begin{prop}\label{prop:cutting8}
Let $u\in\Fq3\setminus\F_q$, the $\F_q$-subspace
\[U=\left\{(\alpha_0+\alpha_1u,\alpha_2+\alpha_3u,\alpha_4+\alpha_5u,\alpha_6+\alpha_7u)\,:\ \alpha_i\in\F_q\right\}\] is a cutting $[8,4]_{q^3/q}$ system.
\end{prop}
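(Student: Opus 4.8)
The plan is to reduce the cutting property to an evasiveness condition via Theorem~\ref{Thm:caract} and then verify the latter by a direct rank computation. Here $(k,m,n)=(4,3,8)$, so $k-2=2$ and $n-m-1=4$; since $U$ clearly has $\dim_{\fq}(U)=8$ and spans $V(4,q^3)$, it is a nondegenerate $[8,4]_{q^3/q}$ system, and by Theorem~\ref{Thm:caract} it suffices to prove that $U$ is $(2,4)_q$-evasive, i.e.\ that every $2$-dimensional $\Fq3$-subspace $H$ of $V(4,q^3)$ satisfies $\dim_{\fq}(U\cap H)\le 4$. The first observation is that $U=W^4$, where $W=\langle 1,u\rangle_{\fq}$ is a $2$-dimensional $\fq$-subspace of $\Fq3$. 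Since $[\Fq3:\fq]=3$ is prime and $u\notin\fq$, the set $\{1,u,u^2\}$ is an $\fq$-basis of $\Fq3$, so $W=\ker\phi$ for the nonzero $\fq$-linear functional $\phi\colon\Fq3\to\fq$ reading off the coordinate of $u^2$.

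Next I would write $H=\langle v_1,v_2\rangle_{\Fq3}$ with $v_1=(a_1,\dots,a_4)$ and $v_2=(b_1,\dots,b_4)$, and encode the intersection as the kernel of the $\fq$-linear map $F\colon\Fq3^2\to\fq^4$, $(\lambda,\mu)\mapsto\big(\phi(\lambda a_i+\mu b_i)\big)_{i=1}^4$. Indeed $\lambda v_1+\mu v_2\in U=W^4$ precisely when each coordinate lies in $\ker\phi$, so $U\cap H=\ker F$ and $\dim_{\fq}(U\cap H)=6-\mathrm{rank}\,F$. Thus the whole statement collapses to the uniform bound $\mathrm{rank}\,F\ge 2$. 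The key elementary fact is that for $z\in\Fq3$ one has $\phi(\lambda z)=0$ for all $\lambda\in\Fq3$ if and only if $z=0$ (otherwise $z\Fq3=\Fq3$ would force $\phi\equiv 0$); consequently, writing $g_i(\lambda,\mu)=\phi(\lambda a_i+\mu b_i)$ for the $i$-th component functional, an $\fq$-combination $c\,g_i+c'\,g_j$ vanishes identically if and only if $c(a_i,b_i)+c'(a_j,b_j)=0$ in $\Fq3^2$.

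To finish, I would use that $v_1,v_2$ are $\Fq3$-linearly independent, so the $2\times 4$ matrix with columns $(a_i,b_i)$ has column rank $2$; pick two columns that are $\Fq3$-independent, hence \emph{a fortiori} $\fq$-independent. By the equivalence above, the corresponding functionals $g_i,g_j$ are then $\fq$-linearly independent, giving $\mathrm{rank}\,F\ge 2$ and therefore $\dim_{\fq}(U\cap H)\le 4$. The main obstacle is precisely this last transfer step: one must convert $\Fq3$-independence of two coordinate columns into $\fq$-independence of the associated $\fq$-functionals, and it is here that both the description $W=\ker\phi$ and the primality of the extension degree are genuinely used; everything else is bookkeeping with $\fq$-dimensions.
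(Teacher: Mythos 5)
Your proof is correct, and after the common first step (invoking Theorem~\ref{Thm:caract} to replace ``cutting'' by ``$(2,4)_q$-evasive'') it diverges from the paper's argument. The paper does \emph{not} check $2$-dimensional subspaces directly: it first applies Propositions~\ref{prop:evasive} and~\ref{prop:evasive1} (with $h=1$, $r=2$, $s=1$) to show that for these parameters $(2,4)_q$-evasive is equivalent to $(1,2)_q$-evasive, and then only has to rule out points of weight $3$, which is done by letting $\lambda=u$ and $\lambda=u^2$ act on a putative such point. You instead exploit the product structure $U=W^4$ with $W=\ker\phi$ an $\F_q$-hyperplane of $\F_{q^3}$, encode $U\cap H$ as the kernel of the $\F_q$-linear map $F=(g_1,\dots,g_4)\colon\F_{q^3}^2\to\F_q^4$, and bound $\dim_{\F_q}(U\cap H)=6-\mathrm{rank}\,F\le 4$ by producing two $\F_q$-independent component functionals; the transfer from $\F_{q^3}$-independence of two columns of $\bigl(\begin{smallmatrix}a_1&\cdots&a_4\\ b_1&\cdots&b_4\end{smallmatrix}\bigr)$ to $\F_q$-independence of $g_i,g_j$ is justified exactly as you say, via the fact that $\phi(\lambda z)=0$ for all $\lambda$ forces $z=0$. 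Your route is self-contained (it does not consume the two auxiliary evasiveness propositions) and in fact scales: the same rank count shows $W^k$ is a cutting $[2k,k]_{q^3/q}$ system for every $k$, since $\mathrm{rank}\,F\ge k-2$ gives $\dim_{\F_q}(U\cap H)\le 2(k-2)=n-m-1$. The paper's route is shorter on the page given that Propositions~\ref{prop:evasive} and~\ref{prop:evasive1} are already established, and its reduction to a point-weight condition is a technique reused elsewhere in the paper. One tiny presentational point: you should state explicitly that $\{1,u,u^2\}$ is an $\F_q$-basis because $u\notin\F_q$ and $[\F_{q^3}:\F_q]=3$ is prime (so $u$ has degree exactly $3$); you gesture at this and it is the only place primality enters.
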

\begin{proof}
By Theorem \ref{Thm:caract}, this is equivalent to show that $U$ is $(2,4)_q$-evasive in $V(4,q^3)$ and by Propositions \ref{prop:evasive} and \ref{prop:evasive1} this is equivalent to show that $U$ is $(1,2)_q$-evasive.
Hence, it is sufficient to show that there are no points of weight $3$ in $U$. By way of contradiction, suppose that a point $P=\langle(\alpha_0+\alpha_1u,\alpha_2+\alpha_3u,\alpha_4+\alpha_5u,\alpha_6+\alpha_7u)\rangle_{\Fqm}$ has weight $3$ in $U$. Then, for each $\lambda\in\Fq3$ there exist $\alpha'_i\in\F_q$, $i\in\{0,\ldots,7\}$, such that
\begin{equation}\label{eq:1}
\lambda(\alpha_{2i}+\alpha_{2i+1}u)=\alpha'_{2i}+\alpha'_{2i+1}u,
\end{equation}
for each $i\in\{0,\ldots,3\}$.
Taking $\lambda=u$ and $\lambda=u^2$, from Equation \eqref{eq:1}, we get $\alpha_i=0$, for each $i\in\{0,\ldots,7\}$, a contradiction.
\end{proof}

In other words, Corollary \ref{cor:nocutting} and Proposition \ref{prop:cutting8} mean that for every prime power $q$, we have
$$ c_q(4,3)=8.$$

\section{A $(2,3)_q$-evasive $[8,4]_{q^4/q}$ system and the associated rank-metric code}\label{sec:construction_evasive}

This section is dedicated to the existence of a linear cutting blocking set of dimension $8$ in $V(4,q^4)$. Or, in other words, to the existence of a scattered $[8,4]_{q^4/q}$ system which is also $(2,3)_q$-evasive. 

The importance of this result is multiple, and it is related to the theory of rank-metric codes. On the one hand it  produces a construction of a minimal rank-metric code of dimension $4$ with respect to the field extension $\F_{q^4}/\fq$ with shortest length.
On the other hand it provides the first example of a $[2m,2(m-d+1),d]_{q^m/q}$ MRD code which is not the direct sum of two $[m,m-d+1,d]_{q^m/q}$ MRD codes. 

For the rest of this section we define the $[8,4]_{q^4/q}$ system $U$ to be the subspace of $\F_{q^4}^4$
\begin{equation}\label{formU}
U := \left\{\left(x,y,x^q+y^{q^2},x^{q^2}+y^q+y^{q^2}\right) \, : \, x,y \in \mathbb{F}_{q^4} \right\}.
\end{equation}

\subsection{Scatteredness and evasiveness}

In this section we only assume to be working over a finite field of characteristic $2$.  As a first step, we show for which finite fields $\fq$ the $[8,4]_{q^4/q}$ system $U$ is scattered.

\begin{theorem}\label{thm:Uscattered}
Let $q=2^{h}$. Then $U$ is maximum scattered if and only if $h\not \equiv 2 \pmod{4}$.
\end{theorem}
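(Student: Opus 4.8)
The plan is to translate maximum scatteredness into the non-existence of a certain configuration and then reduce everything to a trinomial equation. Since $\dim_{\fq}(U)=8=\tfrac{km}{2}$ for $k=m=4$, a scattered $U$ is automatically \emph{maximum} scattered, so it suffices to decide whether $U$ is scattered, i.e. whether some point has weight at least $2$ in $U$. Writing a generic vector of $U$ as $u=(x,y,x^q+y^{q^2},x^{q^2}+y^q+y^{q^2})$, a point of weight $\geq 2$ exists precisely when there are $\lambda\in\F_{q^4}\setminus\fq$ and $u\in U\setminus\{0\}$ with $\lambda u\in U$. Comparing the third and fourth coordinates of $\lambda u$ with the defining shape of $U$ yields the two $\fq$-linearised equations
\[
(\lambda-\lambda^q)x^q+(\lambda-\lambda^{q^2})y^{q^2}=0,\qquad (\lambda-\lambda^{q^2})x^{q^2}+(\lambda-\lambda^q)y^q+(\lambda-\lambda^{q^2})y^{q^2}=0.
\]
I set $a:=\lambda-\lambda^q$ and $b:=\lambda-\lambda^{q^2}$, noting the key identity $b=a+a^q$.

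Next I would eliminate. If $\lambda\in\F_{q^2}$ then $b=0$ and $a\neq0$, which forces $x=y=0$; so I may assume $a,b\neq 0$ and, separately, $x,y\neq 0$. From the first equation $x^q=(b/a)y^{q^2}$; substituting $x^q,x^{q^2}$ into the second and clearing denominators, the system collapses, after setting $w:=(b/a)\,y^{q^2-q}$, to the single equation $w^{q+1}+w+1=0$. The decisive observation is that every root lies in $\fq$: from $w^{q+1}+w+1=0$ one gets $w\neq 0$ and $w^q=1+w^{-1}$, hence $w^{q^2}=(w+1)^{-1}$ and then $w^{q^3}=w$, so $w\in\F_{q^3}\cap\F_{q^4}=\fq$ and therefore $w^2+w+1=0$. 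This last equation is solvable in $\fq$ if and only if $3\mid q-1$, i.e. if and only if $h$ is even. Consequently, for $h$ odd there is no point of weight $\geq 2$ and $U$ is maximum scattered.

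It remains to treat $h$ even, where $w_0\in\F_4\subseteq\fq$ denotes a primitive cube root of unity. Because $y\mapsto y^{q^2-q}$ has image exactly the norm-one subgroup of $\F_{q^4}^\ast$, the value $w_0$ is attained by $(b/a)y^{q^2-q}$ for some $y$ if and only if $\mathrm{N}_{\F_{q^4}/\fq}(b/a)=\mathrm{N}_{\F_{q^4}/\fq}(w_0)=w_0$. Since $a$ has zero trace over $\fq$, the identity $b=a+a^q$ together with $a+a^q+a^{q^2}+a^{q^3}=0$ gives $b^{q^2}=b$, i.e. $b\in\F_{q^2}$, whence by transitivity of the norm $\mathrm{N}_{\F_{q^4}/\fq}(b/a)=\mathrm{N}_{\F_{q^2}/\fq}(b^2/n)$ with $n:=\mathrm{N}_{\F_{q^4}/\F_{q^2}}(a)$. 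Thus $U$ fails to be scattered exactly when some admissible $a$ satisfies $\mathrm{N}_{\F_{q^2}/\fq}(b^2/n)\in\{w_0,w_0^2\}$. For the sub-family $a\in\F_{q^2}\setminus\fq$ (so $b=\Tr_{\F_{q^2}/\fq}(a)\in\fq$ and $n=a^2$) this condition simplifies to $s+s^{-1}=w_0^2$ with $s:=a^{q-1}$ a norm-one element of $\F_{q^2}^\ast$; writing $s^{-1}=s^q$ it becomes $\Tr_{\F_{q^2}/\fq}(s)=w_0^2$, which has a solution $s\neq 1$ if and only if $X^2+w_0^2X+1$ is irreducible over $\fq$, i.e. if and only if $\Tr_{\fq/\F_2}(w_0^2)=1$. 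A short tower computation gives $\Tr_{\fq/\F_2}(w_0^2)=(h/2 \bmod 2)\cdot\Tr_{\F_4/\F_2}(w_0^2)$, which equals $1$ precisely when $h\equiv 2\pmod 4$. Hence for $h\equiv 2\pmod 4$ an explicit weight-$2$ point exists and $U$ is not scattered.

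The main obstacle is the converse in the even case: to conclude that $U$ \emph{is} scattered when $h\equiv 0\pmod 4$, I must rule out all remaining witnesses, namely show that $\mathrm{N}_{\F_{q^2}/\fq}(b^2/n)$ can never be a primitive cube root of unity for admissible $a\notin\F_{q^2}$. This is the genuinely hard step: unlike the $a\in\F_{q^2}$ sub-case it does not reduce to a single circle equation, and it requires analysing the full norm equation $\mathrm{N}_{\F_{q^2}/\fq}(b^2/n)=w_0$ as $a$ ranges over the trace-zero hyperplane, presumably via a further trace reduction or an irreducibility/counting argument on the associated curve. I expect this to be where the bulk of the technical work, and the precise dependence on $h\bmod 4$, is concentrated.
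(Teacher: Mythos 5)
Your reduction is sound as far as it goes, and for two of the three residue classes it reaches the same conclusion as the paper by essentially the same mechanism: eliminating between the two linearised conditions produces the trinomial $t^{q+1}+t+1=0$ for $t=x/y$, whose roots are forced into $\F_{q^3}\cap\F_{q^4}=\fq$ and hence into $\{\omega,\omega^2\}\cap\fq$, so that $h$ odd immediately gives scatteredness. Your treatment of $h\equiv 2\pmod 4$ via the norm-one image of $y\mapsto y^{q^2-q}$ and the irreducibility criterion $\Tr_{\fq/\F_2}(\omega^2)=1$ is a clean repackaging of the paper's explicit construction of $\overline v$ with $\overline v^{\,2}+\overline v+\omega^2=0$; both arguments exhibit the same witness.

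The genuine gap is the case $h\equiv 0\pmod 4$, which you explicitly leave open. This is not a removable technicality: the theorem asserts that $U$ \emph{is} maximum scattered for all $h\not\equiv 2\pmod 4$, so without ruling out witnesses with $a=\lambda+\lambda^q\notin\F_{q^2}$ you have only proved one direction of the equivalence plus a fragment of the other. Moreover, your norm reformulation makes this step harder than it needs to be, because you must now control $\mathrm{N}_{\F_{q^2}/\fq}(b^2/n)$ as $a$ ranges over a three-dimensional $\fq$-space. The paper instead keeps the witness pair fixed, sets $v=\omega^i u$, and studies the linearised equation $\phi_u(\lambda)=\omega^iu^{q^2}\lambda^{q^2}+u^q\lambda^q+(\omega^iu^{q^2}+u^q)\lambda=0$ \emph{in the variable $\lambda$}: its solution set is an $\fq$-subspace, and a short computation (combining $\phi_u$ with its Frobenius twist to get a relation of the form $c_1\lambda^q+c_0\lambda=0$, and showing $c_0,c_1$ cannot both vanish because $\Tr_{\fq/\F_2}(\omega)=0$ when $h\equiv 0\pmod 4$) shows this subspace is exactly $\fq$, so no point acquires weight $2$. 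You would need to supply an argument of this kind --- or an equivalent analysis of your norm equation over the trace-zero hyperplane --- before the proof is complete; note that the dependence on $h\bmod 4$ enters precisely here, through the value of $\Tr_{\fq/\F_2}$ on the cube roots of unity, exactly as it did in your $h\equiv 2\pmod 4$ computation.
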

\begin{proof}
The $\fq$-dimension of $U$ is clearly $8$. Let $\lambda \notin \mathbb{F}_{q}$ be such that 
$$(x,y,x^q+y^{q^2},x^{q^2}+y^q+y^{q^2}) = \lambda (u,v,u^q+v^{q^2},u^{q^2}+v^q+v^{q^2}).$$
The set $U$ is maximum scattered if and only if the previous equation holds only for $u=v=0$.

The above equation yields 
$x=\lambda u$, $y=\lambda v$, 
$$\lambda^q u^q+\lambda^{q^2}v^{q^2}=\lambda(u^q+v^{q^2}), \quad \lambda^{q^2}u^{q^2}+\lambda^qv^{q}+\lambda^{q^2} v^{q^2}=\lambda(u^{q^2}+v^q+v^{q^2}).$$
This implies 
$$(\lambda^{q^2}+\lambda)v^{q^2} +(\lambda^q+\lambda) u^q=0, \quad (\lambda^{q^2}+\lambda)(v^{q^2}+u^{q^2}) +(\lambda^q+\lambda) v^q=0,$$
that we rewrite as
$$ F(u,v):=v^{q^2}\lambda^{q^2} + u^q\lambda^q+(v^{q^2}+u^q)\lambda =0, \quad G(u,v):=(v^{q^2}+u^{q^2})\lambda^{q^2}+v^q\lambda^q+(u^{q^2}+v^q+v^{q^2})\lambda=0.$$
If $v=0$, then $F(u,v)=0$ implies $u=0$.

So we suppose $v\neq 0$. Then, $(v^{q^2}+u^{q^2})F(u,v)+v^{q^2}G(u,v)=0$ reads

\begin{equation}\label{Eq:lambda}
\Big((v^{q^2}+u^{q^2})u^q+v^{q^2}v^q\Big)(\lambda^q+\lambda)=0.
\end{equation}
Since $\lambda\notin \F_q$, then
$$v^{q}u+u^{q+1}+v^{q+1}=0.$$
Since the above expression is homogeneous, we may assume $v=1$. 
 Consider the polynomial $\varphi(T):=T^{q+1}+T+1$. Any root $t$ of $\varphi(T)$ satisfies $t^{q+1}+t+1=0$ and thus  $t^{q}=1+1/t$, $t^{q^2}=1+1/t^q=1/(t+1)$, $t^{q^3}=t$. This shows that $t\in \mathbb{F}_{q^3}\cap \mathbb{F}_{q^4}=\mathbb{F}_q$ and so $t^2+t+1=0$. From this we further deduce that 
 the only possible roots of $\varphi(T)$ in $\F_{q^4}$ are in $\{\omega,\omega^2\}\cap \fq$, where $\F_4=\{0,1,\omega,\omega^2\}$.
 
\begin{enumerate}

\item Let $h\equiv 1 \pmod{2}$. In this case we have $\{\omega,\omega^2\}\cap \mathbb{F}_{q}=\emptyset$. Thus, there are no nonzero pairs $(u,v)\in \mathbb{F}_{q^4}^2$ such that $v^{q}u+u^{q+1}+v^{q+1}=0$. So $U$ is maximum scattered.

\item Let $h\equiv 0 \pmod{4}$. In this case $\{\omega,\omega^2\}\subseteq \mathbb{F}_q$. Thus $v=\omega^i u$, for some $i=1,2$ and for a given $u\in\F_{q^4}$. We want to show that the equation $$\phi_u(\lambda):=F(u,\omega^i u)=\omega^iu^{q^2}\lambda^{q^2} + u^q\lambda^q+(\omega^iu^{q^2}+u^q)\lambda=0$$ admits at most $q$ solutions in $\lambda$, i.e. $\phi_u(\lambda)=0$ if and only if $\lambda\in\F_q$.

Straightforward computations show that if $t\in\F_{q^4}$ is a solution of $\phi_u(\lambda)=0$ then $t$ satisfies 
$$(1+\omega^i \gamma^{q^2}+\omega^i \gamma^q)t^q+(\omega^i\gamma^{q^2+1}+\gamma+1)t=0.$$
Hence, there are more than $q$ solutions only if $1+\omega^i \gamma^{q^2}+\omega^i \gamma^q=\omega^i \gamma^{q^2+1}+\gamma+1=0.$ From the second equation we get $\omega^i \gamma^{q^2+1}+\gamma^{q^2}+1=0$ and thus $\gamma\in \mathbb{F}_{q^2}$. So $\omega \gamma^{2}+\gamma+1=0$. Since $\Tr_{q/2}(\omega)=0$ (recall that $q=2^{4h^{\prime}}$), $\gamma$ belongs to $\mathbb{F}_q$ and this contradicts $0=1+\omega^i \gamma^{q^2}+\omega^i \gamma^q=1$. This shows that $F(u,\omega^i u)=0$ if and only if $\lambda\in\F_q$. Thus, $U$ is  scattered.

\item Let $h\equiv 2 \pmod{4}$. Also in this case $\{\omega,\omega^2\}\subseteq \mathbb{F}_q$ and $u=\omega^i v$, for some  $i=1,2$.  Consider $\overline{v}$ such that $\overline{v}^2+\overline{v}+\omega^2=0$ and $\overline{u}=\omega \overline{v}$. Since $h\equiv 2 \pmod{4}$, $\Tr_{q/2}(\omega^2)=1$, $\overline{v}\in \mathbb{F}_{q^2}\setminus \mathbb{F}_q$, and $\overline{u}^q=\overline{v}+1$. In this case \eqref{Eq:lambda} vanishes and $F(u,v)$ and $G(u,v)$ are proportional. To prove that $U$ is not scattered it is enough to prove that $F(\omega \overline{v},\overline{v})= \overline{v} \lambda^{q^2}+\omega (\overline{v}+1)\lambda^q +(\overline{v}+\omega+\omega \overline{v})\lambda$ has $q^2$ roots in $\mathbb{F}_{q^4}$. To see this, note that any solution $\lambda \in \mathbb{F}_{q^4}$ satisfies 
\begin{eqnarray*}
\lambda^{q^2}&=&\omega \gamma \lambda^q+(1+\omega \gamma)\lambda,
\end{eqnarray*}
where $\gamma = \frac{\overline{v}+1}{\overline{v}}$ satisfies $\gamma^{q}=1/\gamma=\gamma +\omega$. Thus 
\begin{eqnarray*}
\lambda^{q^3}&=&\omega \gamma^q \lambda^{q^2}+(1+\omega \gamma^q)\lambda^q\\
&=&\omega \gamma^q \left(\omega \gamma \lambda^q+(1+\omega \gamma)\lambda \right)+(1+\omega \gamma^q)\lambda^q\\
&=&\omega^2 \lambda^q+ \omega \gamma^q (1+\omega \gamma)\lambda +(1+\omega \gamma^q)\lambda^q\\
&=&(\omega^2+1+\omega \gamma^q) \lambda^q+ \omega \gamma^q (1+\omega \gamma)\lambda,
\end{eqnarray*}
and 
\begin{eqnarray*}
\lambda^{q^4}&=&(\omega^2+1+\omega \gamma) \lambda^{q^2}+ \omega \gamma (1+\omega \gamma^q)\lambda^q\\
&=&(\omega^2+1+\omega \gamma) (\omega \gamma \lambda^q+(1+\omega \gamma)\lambda)+ \omega \gamma (1+\omega \gamma^q)\lambda^q=\lambda.
\end{eqnarray*}
Therefore, $F(\omega \overline{v},\overline{v})$ has  $q^2$ roots in $\mathbb{F}_{q^4}$ and $U$ is not scattered. 
\end{enumerate}
\end{proof}

We now switch to show that the $[8,4]_{q^4/q}$ system $U$ is also $(2,3)_q$-evasive. In order to do this, we first need three technical auxiliary results.

\begin{prop}\label{Prop:caso1}
Let $q=2^{h}$, $h\not\equiv 2\pmod {4}$,  $A,B,C,D\in \mathbb{F}_{q^4}$, $B^{q^2+1}\neq 1$, and 
$$\Lambda =\{(u,v) : v^{q^2}+Au+Bv+u^q=v^{q^2}+u^{q^2}+v^q+Cu+Dv=0\}\cap \mathbb{F}_{q^4}^2.$$
Then $\#\Lambda\leq q^3$.
\end{prop}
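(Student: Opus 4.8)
The plan is to use that both defining equations of $\Lambda$ are $\fq$-linearized polynomials in $u$ and $v$, so that $\Lambda$ is an $\fq$-subspace of $\mathbb{F}_{q^4}^2$ and it suffices to prove $\dim_{\fq}\Lambda\le 3$. Write $E_1(u,v)=v^{q^2}+Bv+u^q+Au$ and $E_2(u,v)=v^{q^2}+v^q+Dv+u^{q^2}+Cu$ for the two forms. The first observation is that $B^{q^2+1}\neq 1$ forces the $\fq$-linear map $\psi\colon v\mapsto v^{q^2}+Bv$ to be injective: a nonzero kernel element would satisfy $v^{q^2-1}=B$, which is solvable exactly when $B^{(q^4-1)/(q^2-1)}=B^{q^2+1}=1$. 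Hence $\psi$ is a bijection of $\mathbb{F}_{q^4}$, and $E_1=0$ reads $\psi(v)=u^q+Au$, so $v$ is uniquely determined by $u$. In particular the projection $\Lambda\to\mathbb{F}_{q^4}$, $(u,v)\mapsto u$, is injective, and the whole problem reduces to exhibiting a single nonzero $\fq$-linearized polynomial of $q$-degree at most $3$ that vanishes on the $u$-coordinate of every element of $\Lambda$: such a polynomial has at most $q^3$ roots in $\mathbb{F}_{q^4}$, and each root determines a unique $v$.

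To produce this polynomial I eliminate $v$. Since we are in characteristic $2$, adding $E_1$ and $E_2$ cancels $v^{q^2}$ and yields the companion relation $v^q+(B+D)v=u^{q^2}+u^q+(A+C)u$, valid on $\Lambda$. Writing the Frobenius as $\phi\colon x\mapsto x^q$ and working in the twisted polynomial ring, $E_1=0$ reads $(\phi^2+B)v=(\phi+A)u$ and the companion relation reads $(\phi+B+D)v=(\phi^2+\phi+A+C)u$. Left-multiplying the latter by $(\phi+(B+D)^q)$ and using the identity $(\phi+(B+D)^q)(\phi+B+D)=\phi^2+(B+D)^{q+1}$ together with $E_1$ to replace $v^{q^2}$, I obtain, for every $(u,v)\in\Lambda$,
\[
\delta\,v=\mathcal O(u),\qquad \delta:=B+(B+D)^{q+1},
\]
where $\mathcal O=\phi^3+(1+(B+D)^q)\phi^2+((A+C)^q+(B+D)^q+1)\phi+((B+D)^q(A+C)+A)$ is \emph{monic} of $q$-degree $3$, hence a nonzero linearized polynomial regardless of the parameters.

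Now I split on $\delta$. If $\delta=0$, the displayed relation becomes $\mathcal O(u)=0$ for every $(u,v)\in\Lambda$; since $\mathcal O$ is a nonzero linearized polynomial of $q$-degree $3$, its set of roots in $\mathbb{F}_{q^4}$ has $\fq$-dimension at most $3$, and as the $u$-coordinates of $\Lambda$ lie in this set we get $\#\Lambda\le q^3$. If $\delta\neq0$, then $v=\delta^{-1}\mathcal O(u)=:\mu(u)$ is a fixed linearized polynomial in $u$ of $q$-degree at most $3$; substituting it into $E_1$ gives the linearized polynomial $\mathcal R(u):=\mu(u)^{q^2}+B\mu(u)+u^q+Au$, which vanishes on all $u$-coordinates of $\Lambda$. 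Thus $\#\Lambda\le q^3$ as soon as $\mathcal R\not\equiv 0$, and the entire statement reduces to excluding the identity $\mathcal R\equiv 0$.

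Excluding $\mathcal R\equiv0$ is the crux and the main obstacle. Reducing $\mathcal R$ modulo $\phi^4-1$ and equating its four coefficients to $0$ yields a system of algebraic relations in $A,B,C,D$ (with $\delta$ in the denominators), and the plan is to show that this system has no solution with $B^{q^2+1}\neq1$ when $h\not\equiv2\pmod4$. I expect the elimination among the four coefficient equations to collapse to the solvability over $\fq$ of a quadratic equation with a coefficient in $\F_4$, whose solvability is governed by the Artin--Schreier trace criterion exactly as in the non-scattered case of Theorem \ref{thm:Uscattered}: there the obstruction $h\equiv2\pmod4$ corresponds to $\F_4\subseteq\fq$ with $\Tr_{q/2}(\omega)=1$, while $h\not\equiv2\pmod4$ means either $\F_4\not\subseteq\fq$ or $\F_4\subseteq\fq$ with $\Tr_{q/2}(\omega)=0$. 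In either of the latter two situations the relevant quadratic has no root, so no admissible $A,B,C,D$ can make $\mathcal R\equiv0$, forcing $\mathcal R\not\equiv0$ and hence $\#\Lambda\le q^3$. The genuinely delicate point is therefore not the (formal) Frobenius elimination but this final characteristic-$2$ arithmetic over $\F_4\subseteq\mathbb{F}_{q^4}$, where the residue of $h$ modulo $4$ is decisive.
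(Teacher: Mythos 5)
Your overall strategy coincides with the paper's: use $B^{q^2+1}\neq 1$ to make $v$ a linearized function of $u$ (the paper does this by computing $B^{q^2}F(u,v)+F(u,v)^{q^2}$, which is exactly your injectivity of $\psi$), substitute into the second equation to obtain a single $\fq$-linearized relation $\alpha u+\beta u^q+\gamma u^{q^2}+\delta u^{q^3}=0$ of $q$-degree at most $3$ in $u$, and conclude $\#\Lambda\leq q^3$ unless that relation is identically zero. Your twisted-polynomial elimination and the $\delta=0$ versus $\delta\neq 0$ dichotomy are a clean and correct way to organize this first half.

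However, there is a genuine gap: the entire content of the proposition under the hypothesis $h\not\equiv 2\pmod 4$ lies in excluding the case where the resulting linearized polynomial vanishes identically, and you do not prove this -- you only state a plan and write that you ``expect'' the four coefficient equations to collapse to a quadratic over $\F_4$ governed by the trace criterion. That expectation is correct in outcome but is not a proof, and nothing in your argument before that point uses $h\not\equiv 2\pmod 4$, so the hypothesis is doing all of its work precisely in the step you omit. In the paper this step occupies most of the proof: one solves $\delta=0$ for $D$, substitutes into $\beta=\gamma=0$ to get $(B^{q^2+1}+1)(A^qB^{q^3}+A^{q^3}B^{q^2}+B^{q^3+q}+1)=0$ and $(B^{q^2+1}+1)(A^{q^3+q^2}+B^{q^3+q}+B^{q^3}+1)=0$, eliminates $A$ (after separately disposing of $A=0$, $B\in\F_q$, and $B^{q^3+q}=B^{q^2+1}$), arrives at a polynomial identity $H(B)=0$, shows via $B^{q^3}H(B)+B^q H(B)^{q^2}$ that $B$ must lie in $\F_{q^2}\setminus\F_q$, and only then reduces to $(B+1)^{12}(B^2+B+\omega)(B^2+B+\omega^2)=0$, where the trace argument over $\F_4$ finally applies. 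None of these case splits or the key factorization is automatic, and several of them (e.g.\ ruling out $B\in\F_q$, or the passage forcing $B\in\F_{q^2}$) could in principle fail for some parameter values; they must be checked. As it stands your argument establishes the reduction but not the proposition.
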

\begin{proof}
Denote by $F(u,v)$ and $G(u,v)$ the {quantities } $v^{q^2}+Au+Bv+u^q$ and $v^{q^2}+u^{q^2}+v^q+Cu+Dv$ respectively{, where $u,v\in \mathbb{F}_{q^4}$}. Now, we have
$$B^{q^2}F(u,v)+(F(u,v))^{q^2}=(B^{q^2+1}+1)v+ AB^{q^2} u + B^{q^2}u^q+ A^{q^2}u^{q^2} + u^{q^3}$$
and therefore, if $(u,v)\in\Lambda$, then
\begin{equation}\label{Eq:v}
    v=\frac{AB^{q^2} u + B^{q^2}u^q+ A^{q^2}u^{q^2} + u^{q^3}}{B^{q^2+1}+1}.
\end{equation}
Thus, we can compute
$$(B^{q^2+1}+1)^{q+1}G\left(u,\frac{AB^{q^2} u + B^{q^2}u^q+ A^{q^2}u^{q^2} + u^{q^3}}{B^{q^2+1}+1}\right)=\alpha u +\beta u^q+\gamma u^{q^2}+\delta u^{q^3},$$
where
\begin{eqnarray*}
 \alpha &=&A B^{q^3+q^2+q}D + AB^{q^3+q}+ AB^{q^2}D + A + B^{q^3+q^2+q+1}C\\
    &&+ B^{q^2+1}C +B^{q^2+1} + B^{q^3+q}C + C + 1,\\
    \beta&=&A^q B^{q^3+q^2+1} + A^{q}B^{q^3}+B^{q^3+q^2+q}D+B^{q^3+q}+B^{q^2}D+1,\\
    \gamma &=&A^{q^2}B^{q^3+q+1} +A^{q^2}B + A^{q^2}B^{q^3+q}D + A^{q^2}D\\
    &&+ B^{q^3+q^2+q+1} + B^{q^3+q^2+1} + B^{q^2+1} + B^{q^3+q}+ B^{q^3} + 1,\\
    \delta &=&A^{q^3}B^{q^2+1} + A^{q^3} + B^{q^3+q+1} + B + B^{q^3+q}D+D.
\end{eqnarray*}

If at least one among $\alpha,\beta,\gamma,\delta$ is not zero, then $\alpha u +\beta u^q+\gamma u^{q^2}+\delta u^{q^3}=0$ provides at most $q^3$ possibilities for $u$, which together with \eqref{Eq:v} yields  $\#\Lambda\leq q^3$.

Suppose now that  $\alpha=\beta=\gamma=\delta =0$. From $\delta=0$ we get
$$D=\frac{A^{q^3}B^{q^2+1} + A^{q^3} + B^{q^3+q+1} + B}{B^{q^3+q}+1}$$
and combining it with $\beta=\gamma=0$ we obtain 
\begin{eqnarray*}
(B^{q^2+1} + 1)(A^qB^{q^3} + A^{q^3}B^{q^2} + B^{q^3+q} + 1)&=&0,\\
(B^{q^2+1} + 1)(A^{q^3+q^2} + B^{q^3+q}+B^{q^3} + 1)&=&0.\\
\end{eqnarray*}
Note that $B\neq0$. Furthermore, we must have $A\neq0$, otherwise $B^{q^3+q} + 1=(B^{q^2+1} + 1)^q=0$ and so $B^{q^3+q}+B^{q^3} + 1\neq 0$ too. 

Also,  $B\in \mathbb{F}_q$ implies 
$$A^{q^3+q^2}=B^2+B+1\in \mathbb{F}_q$$
and so $A\in \mathbb{F}_{q^2}\setminus\{0\}$.
The equation $A^qB^{q^3} + A^{q^3}B^{q^2} + B^{q^3+q} + 1=0$ reads $B^2+1=0$, that is $B=1$, a contradiction to the assumption $B^{q^2+1}\neq 1$.

From now on $B\notin \mathbb{F}_q$. First we want to determine the solutions in $A$ to $A^qB^{q^3} + A^{q^3}B^{q^2} + B^{q^3+q} + 1=0$. Note that if there exist pairs $(x,y)\in \mathbb{F}_{q^4}^2$ such that  
\begin{eqnarray*}
B^{q^3}x + B^{q^2}y + B^{q^3+q} + 1&=&0\\
 Bx +B^{q}y + B^{q^3+q} + 1&=&0\\
\end{eqnarray*}
then either $B^{q^3+q}\neq B^{q^2+1}$ or $B^{q^3+q}= B^{q^2+1}$ and $(B^{q^3+q}+1)(B^{q^3}+B)=0$. The latter condition yields $B\in \mathbb{F}_q$, a contradiction, so we only need to consider $B^{q^3+q}\neq B^{q^2+1}$ and 
$$x=\frac{(B^{q^3+q}+1)(B^{q^2}+B^q)}{B^{q^3+q}+ B^{q^2+1}}.$$
This means that $A^{q}=\frac{(B^{q^3+q}+1)(B^{q^2}+B^q)}{B^{q^3+q}+ B^{q^2+1}}$.

Now, $A^{q^3+q^2} + B^{q^3+q}+B^{q^3} + 1=0$ yields

\begin{eqnarray}\label{Eq:finale}
    H(B):=B^{q^3+q^2}(B^{q^3+q}+ B^{q^2} + 1)B^2+(B^{q^3+q} + 1)(B^{q^3+q^2} + 1)(B^q + B)^{q^2}B\nonumber\\
    +B^{3q^3+3q} + B^{3q^3+2q} + B^{2q^3+2q} + B^{2q^3+q^2+q}+ B^{3q^3+q}+ B^{q^3+q^2} + B^{2q^3}&=&0.
\end{eqnarray}
By direct checking, $B^{q^3}H(B)+B^q(H(B))^{q^2}=(B^{q^2} + B)^{q}(B^{q^2+1} + 1)^{2q}(B^{q^3+q}+ B^{q^2+1})$, whose vanishing gives $B\in \mathbb{F}_{q^2}$. 

Thus we need only to consider the case $B \in \mathbb{F}_{q^2}\setminus \mathbb{F}_q$.
In this case $H(B)=0$ reads $(B^q + B)^2(B^{q+2} + B^{4q}+ B^{3q} + 1)=0$. Since $B\notin \mathbb{F}_q$, $B^2=B^{3q}+B^{2q}+1/B^q$ and $B^{2q}=B^{3}+B^{2}+1/B\neq 0$. So 
$$B^4=B^{6q}+B^{4q}+1/B^{2q}= (B^{3}+B^{2}+1/B)^3+(B^{3}+B^{2}+1/B)^2+\frac{1}{B^{3}+B^{2}+1/B},$$
that is 
$$B^{16} + B^{13} + B^{9} + B^{5} + B + 1=(B+1)^{12}(B^2+B+\omega)(B^2+B+\omega^2)=0,$$
where $\mathbb{F}_4^*=\langle\omega\rangle$. 

\begin{enumerate}
    \item If $h\equiv 1 \pmod 2$ then $\omega \in \mathbb{F}_{q^2}\setminus \mathbb{F}_q$ and from $\Tr_{q^2/2}(\omega)=\Tr_{q^2/2}(\omega^2)=1$ and any $B$ satisfying $(B^2+B+\omega)(B^2+B+\omega^2)=0$ does not belong to $\mathbb{F}_{q^2}$, a contradiction. 
    \item If $h\equiv 0 \pmod 4$ then $\omega \in \mathbb{F}_{q}$ and from $\Tr_{q/2}(\omega)=\Tr_{q/2}(\omega^2)=0$ we conclude that  any $B$ satisfying $(B^2+B+\omega)(B^2+B+\omega^2)=0$ belongs to $\mathbb{F}_{q}$, again a contradiction.
\end{enumerate}
This provides the final contradiction to $\alpha=\beta=\gamma=\delta=0$ and therefore $\Lambda\leq q^3$.
\end{proof}

\begin{prop}\label{Prop:caso2}
Let $q=2^{h}$,  $A,B,C,D\in \mathbb{F}_{q^4}$, $B^{q^2} =1/B$, and 
$$\Lambda =\{(u,v) : v^{q^2}+Au+Bv+u^q=v^{q^2}+u^{q^2}+v^q+Cu+Dv=0\}\cap \mathbb{F}_{q^4}^2.$$
Then $\#\Lambda\leq q^3$.
\end{prop}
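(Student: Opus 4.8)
The plan is to read $\Lambda$ as the kernel of the $\fq$-linear map
$\Phi\colon \mathbb{F}_{q^4}^2\to\mathbb{F}_{q^4}^2$, $\Phi(u,v)=(F(u,v),G(u,v))$, where
$F(u,v)=v^{q^2}+Au+Bv+u^q$ and $G(u,v)=v^{q^2}+u^{q^2}+v^q+Cu+Dv$, so that $\Lambda$ is an $\fq$-subspace of $\mathbb{F}_{q^4}^2$ and the claim $\#\Lambda\le q^3$ is equivalent to $\dim_{\fq}\Lambda\le 3$. I would control this dimension through the projection $\pi\colon\Lambda\to\mathbb{F}_{q^4}$, $\pi(u,v)=u$, using $\dim_{\fq}\Lambda=\dim_{\fq}(\operatorname{im}\pi)+\dim_{\fq}(\ker\pi)$. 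The first step is exactly the Frobenius elimination already used in Proposition \ref{Prop:caso1}: computing $B^{q^2}F+F^{q^2}$ produces $(B^{q^2+1}+1)v+AB^{q^2}u+B^{q^2}u^q+A^{q^2}u^{q^2}+u^{q^3}$, and the hypothesis $B^{q^2}=1/B$ (i.e.\ $B^{q^2+1}=1$) kills the $v$-term. Hence every $(u,v)\in\Lambda$ satisfies the linearized equation $L(u):=u^{q^3}+A^{q^2}u^{q^2}+B^{q^2}u^q+AB^{q^2}u=0$; since $L$ is a nonzero $q$-polynomial of $q$-degree $3$, its root set is an $\fq$-space of dimension at most $3$, so $\operatorname{im}\pi\subseteq\{L=0\}$ has $\dim_{\fq}\le 3$.

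Next I would compute $\ker\pi=\{(0,v)\in\Lambda\}$. Setting $u=0$ in $F$ and in $F+G$ gives $v^{q^2}=Bv$ and $v^q=Ev$ with $E:=B+D$, so $\ker\pi=K_1\cap K_2$ where $K_1=\ker(v\mapsto v^q+Ev)$ and $K_2=\ker(v\mapsto v^{q^2}+Bv)$. A nonzero element of $K_1\cap K_2$ forces $Bv=v^{q^2}=E^{q+1}v$, i.e.\ $E^{q+1}=B$. This yields a clean dichotomy. If $E^{q+1}\neq B$, then $\ker\pi=\{0\}$, hence $\dim_{\fq}\Lambda=\dim_{\fq}(\operatorname{im}\pi)\le 3$ and we are done. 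If instead $E^{q+1}=B$, then using $B^{q^2+1}=1$ one gets $\mathrm{N}_{q^4/q}(E)=(E^{q+1})^{q^2+1}=B^{q^2+1}=1$, so $K_1$ is $1$-dimensional, $K_1\subseteq K_2$, and $\dim_{\fq}\ker\pi=1$; consequently it now suffices to prove $\dim_{\fq}(\operatorname{im}\pi)\le 2$.

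To handle the remaining case $E^{q+1}=B$ I would extract a second constraint on $u$. From $F+G=0$ one has $v^q+Ev=R(u)$ with $R(u):=(A+C)u+u^q+u^{q^2}$, and from $F=0$ one has $v^{q^2}+Bv=S(u)$ with $S(u):=Au+u^q$. Raising the first to the $q$-th power and substituting into the second gives $(E^{q+1}+B)v=S(u)+E^qR(u)+R(u)^q$; since the left-hand coefficient vanishes, every $(u,v)\in\Lambda$ must satisfy the $q$-degree-$3$ linearized equation $M(u):=S(u)+E^qR(u)+R(u)^q=0$. Therefore $\operatorname{im}\pi\subseteq\{L=0\}\cap\{M=0\}\subseteq\{L+M=0\}$. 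Because $L$ and $M$ share the leading term $u^{q^3}$, the polynomial $L+M$ has $q$-degree at most $2$; provided $L+M\not\equiv 0$, its root set has $\fq$-dimension at most $2$, giving $\dim_{\fq}(\operatorname{im}\pi)\le 2$ and hence $\dim_{\fq}\Lambda\le 3$, as required.

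The main obstacle, and the only genuinely computational part, is ruling out $L+M\equiv 0$. Matching the three coefficients of $L+M$ to zero and feeding in the standing relations $B^{q^2+1}=1$ and $E^{q+1}=B$ forces, after taking suitable Frobenius powers, $E=A^q+1$, $B=(A^q+1)(A^{q^2}+1)$ and $\mathrm{N}_{q^4/q}(A+1)=1$, while the $u^q$-coefficient equation pins down $C$ explicitly in terms of $A$. Substituting all of this into the surviving $u$-coefficient equation collapses $L+M\equiv 0$ to a single polynomial identity in $A$ restricted to the norm-one locus $\mathrm{N}_{q^4/q}(A+1)=1$, and the crux is to show this identity has no solution for any $q=2^h$. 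I expect this to require the same explicit $\mathbb{F}_{q^4}$-arithmetic---Frobenius substitutions together with trace and norm conditions on the relevant quadratics, as in the final contradiction of Proposition \ref{Prop:caso1}---and to be the heart of the argument, the earlier steps being essentially structural.
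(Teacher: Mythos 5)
Your argument up to the degenerate case is sound and, in substance, coincides with the paper's: your $L$ is the paper's elimination $B^{q^2}F+F^{q^2}$ (the hypothesis $B^{q^2+1}=1$ killing the $v$-term), your dichotomy $E^{q+1}=B$ versus $E^{q+1}\neq B$ is exactly the paper's $\epsilon=0$ versus $\epsilon\neq 0$, and your $M$ is literally the paper's combination $(F+G)^q+(B+D)^q(F+G)+F$. The rank--nullity packaging $\dim_{\F_q}\Lambda=\dim_{\F_q}(\mathrm{im}\,\pi)+\dim_{\F_q}(\ker\pi)$ is a tidier bookkeeping that uniformly absorbs the paper's separate sub-cases ($A=0$, or the first linearized polynomial having fewer than $q^3$ roots), and your verification that $\ker\pi$ has dimension exactly $1$ when $E^{q+1}=B$ is correct. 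The genuine gap is the final step, which you only sketch: you must dispose of the case $L+M\equiv 0$, and you assert, without proof, that matching the three coefficients to zero ``has no solution for any $q=2^h$.'' That is precisely the computational heart of the proposition, and nothing in your outline guarantees the outcome you are betting on.

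Two concrete warnings about that last step. First, your degenerate system is \emph{not} the one the paper resolves: the paper only compares the two linearized polynomials after imposing the additional normalization $A^{q+1}=B^q$ (its criterion for $\ker L$ to have $\F_q$-dimension $3$), and even then its degenerate case does not terminate in a contradiction --- it collapses to the explicit parameters $A=B=C=D=1$, for which the bound is then checked directly. So you cannot import the paper's computation, and you should not assume a priori that your system (namely $E=A^q+1$, $B=(A^q+1)^{q+1}$, $\mathrm{N}_{q^4/q}(A+1)=1$, $C$ determined, plus one residual equation in $A$) is vacuous; a hand check for $q=2$ shows it has no solutions, but that is not a proof for general $h$. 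Second, if it does have solutions you need a fallback: note that $\dim_{\F_q}\ker L\leq 2$ whenever $\mathrm{N}_{q^4/q}(A)\neq 1$ (since then $u\mapsto Au+u^q$ is bijective and $\ker L$ is the preimage of the $2$-dimensional kernel of $w\mapsto w^{q^2}+B^{q^2}w$), which would rescue the bound $\dim_{\F_q}(\mathrm{im}\,\pi)\leq 2$ without needing $L\neq M$; otherwise a direct count for the surviving parameters, as the paper does, is required. Until one of these routes is actually carried out, the proof is incomplete.
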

\begin{proof}
We use the same notation as in the proof of Proposition \ref{Prop:caso1}. Now,
\begin{equation}\label{Eq:1}
B^{q^2}F(u,v)+(F(u,v))^{q^2}=AB^{q^2} u + B^{q^2}u^q+ A^{q^2}u^{q^2} + u^{q^3}=0
\end{equation}
and 
\begin{equation}\label{Eq:2}
F(u,v)+G(u,v)=(A+C)u + u^q + u^{q^2} + (B+D)v + v^q=0.
\end{equation}
If $A=0$, then $AB^{q^2} u + B^{q^2}u^q+ A^{q^2}u^{q^2} + u^{q^3}=0$ has at most $q^2$ solutions and this provides at most $q^3$ pairs, since for each $u$ there are at most $q$ values $v$ such that $(A+C)u + u^q + u^{q^2} + (B+D)v + v^q=0$. So we can suppose $A\neq 0$. With the same argument as above, we consider only the case in which $AB^{q^2} u + B^{q^2}u^q+ A^{q^2}u^{q^2} + u^{q^3}$ has $q^3$ roots in $\mathbb{F}_{q^4}$ which is equivalent to $A^{q+1}=B^q$. Note that
$$
(F(u,v)+G(u,v))^q+(B+D)^q(F(u,v)+G(u,v))+F(u,v)=\epsilon v+\alpha u+\beta u^q+\gamma u^{q^2}+\delta u^{q^3},
$$
where
\begin{eqnarray*}
\alpha&=&A(B+D+1)^q+ C(B+D)^q\\
\beta&=&(A+B+C+D+1)^q\\
\gamma&=&B^q+D^q+1\\
\delta&=&1.
\end{eqnarray*}
If $\epsilon\neq 0$ then $\#\Lambda\leq q^3$. Assume $\epsilon= 0$. From
\begin{eqnarray}\label{Eq:grado2}
AB^{q^2} u + B^{q^2}u^q+ A^{q^2}u^{q^2} + u^{q^3}+\alpha u+\beta u^q+\gamma u^{q^2}+\delta u^{q^3}\nonumber\\
=(A^{q^2}+B^q+D^q+1)u^{q^2}+(A+B+C+D+1+B^q)^qu^q\nonumber\\
+(AB^{q^2}+A(B+D+1)^q+ C(B+D)^q)u=0,
\end{eqnarray}
we conclude that if one among 
$$A^{q^2}+B^q+D^q+1, A+B+C+D+1+B^q, A(B^{q}+B+D+1)^q+ C(B+D)^q$$

does not vanish, then at most $q^2$ values of $u$ satisfy \eqref{Eq:grado2} and  $\#\Lambda\leq q^3$. 
So 
$$D=A^{q}+B+1, \quad C=A+B+D+1+B^q=A+A^{q}+B^q.$$
From $A(B^{q}+B+D+1)^q+ C(B+D)^q=0$, we get 
$$A(A^{q+1}+A^q)^q+ (A+A^q+A^{q+1})(A^q+1)^q=0.$$
Thus

$$A^{q^2+q}+A^{q+1}+A^q+A=0.$$
Note that if $A^{q+1}+A^q+A=0$ then $A=0$, a contradiction. So we can suppose $A^{q+1}+A^q+A\neq 0$. Thus 

$$A^{q^2}=1+A+A/A^q, \quad A^{q^3}=1+A^q+\frac{A^q}{1+A+A/A^q}, \quad A=1+1+A+A/A^q+\frac{1+A+A/A^q}{1+A^q+\frac{A^q}{1+A+A/A^q}}$$
and finally 
$$A^{q+1}+A^2+A^{2q+2}=A^{2q}+A^{2q+2}+A^2.$$

Then $A\in \mathbb{F}_q^*$, $B=A^2\in \mathbb{F}_q$. Thus, from  $B^{q^2+1}=1$, $B=1=A=C=D$. In this case Equations \eqref{Eq:1} and \eqref{Eq:2} yield  $$u^{q^3}+u^{q^2}+u^{q}+u=0=v+u^{q}+u$$
and  $\#\Lambda\leq q^3$. 
\end{proof}

\begin{prop}\label{Prop:caso3}
Let $q=2^{h}$, $h\equiv 1\pmod 2$, and let $A_{1,3},A_{1,4},A_{2,3},A_{2,4},A_{3,4}\in \mathbb{F}_{q^4}$ be such that $(A_{1,3},A_{2,3},A_{3,4})\neq (0,0,0)$. Then the following system has at most $q^2$ solutions $(u,v)\in\F_{q^4}^2$:
$$\begin{cases}
A_{2,3}u+A_{1,3} v=0\\
A_{2,4}u+A_{1,4} v=0\\
A_{3,4}v+A_{2,4}(u^q+v^{q^2})+A_{2,3}(u^{q^2}+v^{q^2}+v^q)=0\\
A_{3,4}u+A_{1,4}(u^q+v^{q^2})+A_{1,3}(u^{q^2}+v^{q^2}+v^q)=0.
\end{cases}$$
\end{prop}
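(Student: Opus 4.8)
The plan is to exploit that the solution set $\Lambda$ is an $\fq$-subspace of $\mathbb{F}_{q^4}^2$: the last two equations are $\fq$-linearized in $(u,v)$, while the first two are even $\mathbb{F}_{q^4}$-linear, so all four define $\fq$-linear conditions. Hence it suffices to prove $\dim_{\fq}\Lambda\le 2$, i.e. $\#\Lambda\le q^2$. I would then split according to the rank over $\mathbb{F}_{q^4}$ of the linear system given by the first two equations, namely the solution space $W=\{(u,v):A_{2,3}u+A_{1,3}v=A_{2,4}u+A_{1,4}v=0\}$. If the coefficient matrix has rank $2$ then $W=\{0\}$ and we are done; if all of $A_{1,3},A_{2,3},A_{1,4},A_{2,4}$ vanish then $A_{3,4}\neq 0$ by hypothesis and the third and fourth equations force $u=v=0$. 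In both of these cases $\#\Lambda=1$.

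The substantial case is when $W$ is a line $L=\{t(u_0,v_0):t\in\mathbb{F}_{q^4}\}$ for a fixed $(u_0,v_0)\neq(0,0)$. Here I would substitute $u=tu_0$, $v=tv_0$ into the third and fourth equations. Since each $x\mapsto x^{q^i}$ is $\fq$-linear, the two equations become linearized polynomials $P_3(t),P_4(t)\in\mathbb{F}_{q^4}[t]$ supported on $t,t^q,t^{q^2}$, hence of $q$-degree at most $2$. A nonzero such polynomial has at most $q^2$ roots in $\mathbb{F}_{q^4}$, so as soon as $P_3\not\equiv 0$ or $P_4\not\equiv 0$ we get $\Lambda\subseteq\{t:P_3(t)=0\}$ (resp. for $P_4$), whence $\#\Lambda\le q^2$. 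Everything therefore reduces to excluding the possibility $P_3\equiv P_4\equiv 0$.

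Assuming $P_3\equiv P_4\equiv 0$, I would read off the six coefficient conditions. The coefficients of $t$ give $A_{3,4}v_0=A_{3,4}u_0=0$, so $A_{3,4}=0$. Treating the degenerate subcases $u_0=0$ and $v_0=0$ directly (each forces the relevant $A_{i,j}$ to vanish, contradicting $(A_{1,3},A_{2,3},A_{3,4})\neq(0,0,0)$), I may normalise $u_0=1$, $v_0=s$ with $s\neq 0$. The first two equations then give $A_{2,3}=A_{1,3}s$ and $A_{2,4}=A_{1,4}s$; vanishing of the $t^q$-coefficients forces $A_{1,4}=A_{1,3}s^q$ (and hence $A_{1,3}\neq 0$, else all $A_{i,j}=0$); and vanishing of the $t^{q^2}$-coefficients collapses, after dividing out $A_{1,3}s$, to the single equation
$$ s^{q^2+q}+s^{q^2}+1=0. $$

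The hard part is to rule out this equation over $\mathbb{F}_{q^4}$ when $h$ is odd. Here I would apply Frobenius: raising $s^{q^2+q}+s^{q^2}+1=0$ to the power $q^3$ and using $s^{q^4}=s$ yields $s^{q+1}+s^q+1=0$, and dividing by $s^{q+1}$ shows that $w:=1/s$ is a root of $\varphi(T)=T^{q+1}+T+1$. But the root analysis already carried out inside the proof of Theorem \ref{thm:Uscattered} shows that any root of $\varphi$ in $\mathbb{F}_{q^4}$ satisfies $t^{q^3}=t$, hence lies in $\mathbb{F}_{q^3}\cap\mathbb{F}_{q^4}=\fq$ and is a primitive cube root of unity, which forces $\mathbb{F}_4\subseteq\fq$, i.e. $h$ even. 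Since $h$ is odd, $\varphi$ has no root in $\mathbb{F}_{q^4}$, contradicting the existence of $s$. This rules out $P_3\equiv P_4\equiv 0$ and yields $\#\Lambda\le q^2$ in all cases. The only delicate points are the bookkeeping that collapses the coefficient conditions to a single equation in $s$ and the reuse of the root analysis of $\varphi$.
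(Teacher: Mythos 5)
Your proof is correct and follows essentially the same route as the paper's: both reduce to a one-parameter $\F_{q^4}$-line determined by the two linear equations (you organize this by the rank of the $2\times 2$ coefficient matrix, the paper by which of $A_{2,3},A_{1,3}$ vanish and the substitution $u=\lambda v$), substitute into the two $q$-linearized equations to get linearized polynomials of $q$-degree at most $2$ with at most $q^2$ roots unless identically zero, and rule out identical vanishing because $T^{q+1}+T+1$ has no root in $\F_{q^4}$ when $h$ is odd. The only differences are organizational, so there is nothing substantive to flag.
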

\begin{proof}
If $A_{2,3}=0$ and $A_{1,3}\neq 0$, then $v=0$ from the first equation and from the fourth one there are at most $q^2$ distinct pairs $(u,0)$.

If $A_{2,3}=0=A_{1,3}$, then $A_{3,4}\neq 0$. If $(A_{2,4},A_{1,4})=(0,0)$, then the unique solution is $(0,0)$. If $A_{2,4}\neq0$, then $A_{2,4}u+A_{1,4} v=0=A_{3,4}v+A_{2,4}(u^q+v^{q^2})$ provide at most $q^2$ solutions. The same holds if $A_{1,4}\neq0$. 

Suppose now $A_{2,3}\neq 0$. From $A_{2,3}u+A_{1,3} v=0$ one gets $u=\lambda v$ and 

$$\begin{cases}
(A_{2,4}\lambda +A_{1,4}) v=0\\
A_{3,4}v+A_{2,4}(\lambda^qv^q+v^{q^2})+A_{2,3}(\lambda^{q^2} v^{q^2}+v^{q^2}+v^q)=0\\
A_{3,4}\lambda v+A_{1,4}(\lambda^q v^q+v^{q^2})+A_{1,3}(\lambda^{q^2} v^{q^2}+v^{q^2}+v^q)=0.
\end{cases}$$
The three equations above vanish  if and only if
$$\begin{cases}
A_{1,4}=A_{2,4}\lambda\\
A_{3,4}=0\\
A_{2,3}=A_{2,4}\lambda^q\\
A_{2,4}+A_{2,3}(\lambda^{q^2}+1)=0\\
A_{1,3}=A_{1,4}\lambda^q\\
A_{1,4}+A_{1,3}(\lambda^{q^2}+1)=0.\\
\end{cases}$$
Since $A_{2,4}\neq 0$, 
$$\lambda^{q^2+q}+\lambda^q+1=0,$$
that is $(\lambda^q)^{q+1}+(\lambda^q)+1=0$ which has no solution if $h\equiv 1 \pmod{2}$. 
\end{proof}

We are now ready to prove that when $q$ is an odd power of $2$, the $[8,4]_{q^4/q}$ system $U$ is $(2,3)_q$-evasive.
\begin{theorem}\label{thm:Uevasive}
Let $q=2^{h}$, $h\equiv 1 \pmod{2}$. Then $U$ is $(2,3)_q$-evasive.
\end{theorem}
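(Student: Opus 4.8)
The plan is to reduce the evasiveness statement to the three counting estimates in Propositions~\ref{Prop:caso1}, \ref{Prop:caso2} and \ref{Prop:caso3}, organized by a case analysis on the Pl\"ucker coordinates of the subspace under consideration. Since $U\cap H$ is an $\fq$-subspace, the condition $\dim_{\fq}(U\cap H)\le 3$ is equivalent to $\#(U\cap H)\le q^3$, so it suffices to show that every $2$-dimensional $\F_{q^4}$-subspace $H$ of $V(4,q^4)$ contains at most $q^3$ vectors of $U$. Using the parametrization \eqref{formU}, a vector of $U$ is determined by the pair $(u,v)=(x,y)\in\F_{q^4}^2$, and this identification is $\fq$-linear and injective; hence I will instead bound the number of pairs $(u,v)$ for which $\left(u,v,u^q+v^{q^2},u^{q^2}+v^q+v^{q^2}\right)\in H$.

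First I would represent $H$ as the row space of a $2\times 4$ matrix $M$ over $\F_{q^4}$ and record its six Pl\"ucker coordinates $A_{i,j}$, $1\le i<j\le 4$, namely the $2\times 2$ minors of $M$. A vector $z=(z_1,z_2,z_3,z_4)$ lies in $H$ precisely when the four maximal minors of the $3\times 4$ matrix obtained by adjoining $z$ to $M$ vanish; expanding each minor along $z$ yields the four linear conditions indexed by the triples $\{i,j,k\}\subseteq\{1,2,3,4\}$, with coefficients among the $A_{i,j}$. Substituting $z_1=u$, $z_2=v$, $z_3=u^q+v^{q^2}$ and $z_4=u^{q^2}+v^q+v^{q^2}$ turns membership in $H$ into a system of four $\fq$-linearized equations in $(u,v)$ whose coefficients are the Pl\"ucker coordinates.

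The analysis then splits according to whether $A_{1,2}$ vanishes. If $A_{1,2}\neq 0$, row-reducing $M$ to reduced echelon form with pivots in the first two columns lets me solve the conditions coming from the triples $\{1,2,3\}$ and $\{1,2,4\}$ for $z_3$ and $z_4$, after which the remaining two conditions are automatic. This produces exactly the system of Propositions~\ref{Prop:caso1} and \ref{Prop:caso2}, with $B$ the coefficient of $v$ in the $z_3$-equation. Since the hypotheses $B^{q^2+1}\neq 1$ and $B^{q^2+1}=1$ (equivalently $B^{q^2}=1/B$) are mutually exclusive and exhaustive, and since $h\equiv 1\pmod 2$ guarantees the condition $h\not\equiv 2\pmod 4$ required by Proposition~\ref{Prop:caso1}, in either case the number of solutions is at most $q^3$. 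If instead $A_{1,2}=0$, the four membership conditions are literally the system of Proposition~\ref{Prop:caso3}; when $(A_{1,3},A_{2,3},A_{3,4})\neq(0,0,0)$ that proposition (again using $h$ odd) bounds the solutions by $q^2\le q^3$. The only remaining configuration is $A_{1,2}=A_{1,3}=A_{2,3}=A_{3,4}=0$ with $(A_{1,4},A_{2,4})\neq(0,0)$; here the system collapses to $u^q+v^{q^2}=0$ together with a single nontrivial $\F_{q^4}$-linear relation between $u$ and $v$, and eliminating one variable leaves a one-variable $\fq$-linearized equation with at most $q^2$ roots. This exhausts all $H$ and yields the claim.

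The substantive difficulty is entirely contained in Propositions~\ref{Prop:caso1}--\ref{Prop:caso3}, which I treat as already established; within the present theorem the work to get right is organizational. The hard part will be to verify that the dichotomy on $A_{1,2}$, refined by the $B^{q^2+1}$-dichotomy inside it, is genuinely exhaustive, and that after normalization the coefficients $(A,B,C,D)$ of the reduced system match the hypotheses of the relevant proposition, so that no $2$-dimensional $H$ is overlooked — in particular the degenerate Pl\"ucker patterns excluded from the hypothesis of Proposition~\ref{Prop:caso3}. I also need the arithmetic check that $h\equiv 1\pmod 2$ makes all three propositions simultaneously applicable.
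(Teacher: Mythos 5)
Your proof is correct and follows essentially the same strategy as the paper: reduce the bound $\#(U\cap H)\le q^3$ to Propositions~\ref{Prop:caso1}, \ref{Prop:caso2} and \ref{Prop:caso3} by a case analysis on whether the $(1,2)$ Pl\"ucker coordinate of $H$ vanishes, with the sub-dichotomy $B^{q^2+1}\neq 1$ versus $B^{q^2+1}=1$ matching the hypotheses of the first two propositions. The one organizational difference is that the paper parametrizes $H$ by two $\F_{q^4}$-independent vectors $P,Q\in U$ (so that the coefficients $A_{i,j}$ are minors of a matrix built from elements of $U$, which is what lets it rule out $(A_{1,3},A_{2,3},A_{3,4})=(0,0,0)$ directly), whereas you take an arbitrary $2$-dimensional $H$ and its Pl\"ucker coordinates; your version therefore must, and correctly does, dispose of the extra degenerate configuration $A_{1,2}=A_{1,3}=A_{2,3}=A_{3,4}=0$, where the system collapses to $u^q+v^{q^2}=0$ plus one nontrivial linear relation and has at most $q$ nonzero solutions. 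This buys you a treatment that covers every $2$-dimensional subspace without first invoking the scatteredness of $U$ (Theorem~\ref{thm:Uscattered}) to guarantee that a subspace meeting $U$ in dimension at least $4$ is spanned by two vectors of $U$ --- a reduction the paper leaves implicit --- at the cost of one additional, easy case.
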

\begin{proof}
We need to prove that any $2$-dimensional $\F_{q^4}$-subspace contains at most $q^3$ vectors of $U$.

Let $P=(x,y,x^q+y^{q^2},x^{q^2}+y^q+y^{q^2})$ and $Q=(z,t,z^q+t^{q^2},z^{q^2}+t^q+t^{q^2})$ be two  $\F_{q^4}$-independent  vectors of $U$. A vector $R=(u,v,u^q+v^{q^2},u^{q^2}+v^q+v^{q^2})$ belongs to  $\langle P,Q\rangle_{\F_{q^4}}$ if and only if the rank of the following matrix is $2$: 
$$
M(P,Q,R):=\begin{pmatrix}
x&y&x^q+y^{q^2}&x^{q^2}+y^q+y^{q^2}\\
z&t&z^q+t^{q^2}&z^{q^2}+t^q+t^{q^2}\\
u&v&u^q+v^{q^2}&u^{q^2}+v^q+v^{q^2}\\
\end{pmatrix}.
$$
If $xt+zy\neq 0$ then 
$$\begin{cases}
\frac{(x^q+y^{q^2})t+(z^q+t^{q^2})y}{xt+zy}u +\frac{(x^q+y^{q^2})z+(z^q+t^{q^2})x}{xt+zy}v+u^q+v^{q^2}=0,\\
\frac{(x^{q^2}+y^q+y^{q^2})t+(z^{q^2}+t^q+t^{q^2})y}{xt+zy}u +\frac{(x^{q^2}+y^q+y^{q^2})z+(z^{q^2}+t^q+t^{q^2})x}{xt+zy}v+u^{q^2}+v^q+v^{q^2}=0.\\
\end{cases}
$$
By Propositions \ref{Prop:caso1} and \ref{Prop:caso2} the system above has at most $q^3$ solutions $(u,v)\in \mathbb{F}_{q^4}^2$.

Suppose now $xt+zy= 0$ then 
$$\begin{cases}
A_{2,3}u+A_{1,3} v=0\\
A_{2,4}u+A_{1,4} v=0\\
A_{3,4}v+A_{2,4}(u^q+v^q)+A_{2,3}(u^{q^2}+v^{q^2}+v^q)=0\\
A_{3,4}u+A_{1,4}(u^q+v^q)+A_{1,3}(u^{q^2}+v^{q^2}+v^q)=0,
\end{cases}$$
where
\begin{eqnarray*}
A_{2,3}&=&(x^q+y^{q^2})t+(z^q+t^{q^2})y,\\
A_{2,4}&=&(x^{q^2}+y^q+y^{q^2})t+(z^{q^2}+t^q+t^{q^2})y,\\
A_{3,4}&=&(x^{q^2}+y^q+y^{q^2})(z^q+t^{q^2})+(x^q+y^{q^2})(z^{q^2}+t^q+t^{q^2}),\\
A_{1,4}&=&(x^{q^2}+y^q+y^{q^2})z+x(z^{q^2}+t^q+t^{q^2}),\\
A_{1,3}&=&(x^q+y^{q^2})z+(z^q+t^{q^2})x.
\end{eqnarray*}
Since the rank of 
$$
\begin{pmatrix}
x&y&x^q+y^{q^2}&x^{q^2}+y^q+y^{q^2}\\
z&t&z^q+t^{q^2}&z^{q^2}+t^q+t^{q^2}\\
\end{pmatrix}$$
is $2$, $(A_{1,3},A_{2,3},A_{3,4})\neq (0,0,0)$ otherwise $x=y^q$ and $z=t^q$ and from $xt+yz=0$ one gets $y/t=x/z\in \mathbb{F}_q^*$, a contradiction. The claim follows from Proposition \ref{Prop:caso3}.
\end{proof}

\subsection{The duality}
Let $\Tr_{{q^4}/q}$ denote the trace function of $\F_{q^4}$ over
$\F_q$. The map $\Tr_{{q^4}/q}(X_0X_3-X_1X_2)$ defines a quadratic form of
$\F_{q^4}^4$ (regarded as $\F_q$-vector space) over $\F_q$. The polar form
associated with such a quadratic form is
$\sigma':=(\Tr_{{q^4}/q}\circ \sigma) : \F_{q^4}^4\times \F_{q^4}^4\longrightarrow \fq$, where $$\sigma\left((X_0,X_1,X_2,X_3),(Y_0,Y_1,Y_2,Y_3)\right)=X_0Y_3+X_3Y_0-X_1Y_2-X_2Y_1.$$ 

If $f$ is an $\F_q$--linear map from $\F_{q^4}$ to
$\F_{q^4}$ we will denote by $f^\top$ the \textbf{adjoint} of $f$ with
respect to the bilinear form of $\F_{q^4}$ (treated as
$\F_q$-vector space) $\Tr_{q^4/q}(xy)$. In other words, the adjoint $f^\top$ of $f$ is defined by the rule: 
$\Tr_{q^4/q}(xf(y))=\Tr_{q^4/q}(yf^\top(x))$ for any $x,y \in \F_{q^4}$.

Let $h_1,h_2,g_1,g_2$ be $\F_q$-linear maps over $\F_{q^4}$, and let \[X=\left\{\left(x,y,h_1(x)+h_2(y),g_1(x)+g_2(y)\right)\colon x,y\in\F_{q^4}\right\}\] be an $\F_{q}$-subspace of $\F_{q^4}^4$ of dimension $8$. Straightforward computations show that the orthogonal complement of $X$ with respect to the
$\F_q$-bilinear form $\sigma'$ is 
\[X^{\tau'}=\left\{\left(z,t,-g_2^\top(z)-h_2^\top(t),-h_1^\top(t)-g_1^\top(z)\right)\colon z,t\in\F_{q^4}\right\}.\]

Hence, for $q=2^h$, $h\equiv 1\pmod 2$, the orthogonal complement of $U$  is the $[8,4]_{q^4/q}$ system
\begin{equation}\label{formUperp}
U^{\tau'}= \left\{\left(z,t,z^{q^{3}}+z^{q^{2}}+t^{q^{2}},z^{q^{2}}+t^{q^{3}}\right) : z,t \in \mathbb{F}_{q^4}^2 \right\}.
\end{equation}

\begin{prop}\label{equivperp}
The $\F_q$-subspaces $U$ and $U^{\tau'}$ are $\mathrm{GL}(4,q^4)$-equivalent.
\end{prop}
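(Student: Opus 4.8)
The plan is to exhibit an explicit matrix $M \in \mathrm{GL}(4,q^4)$ realizing the equivalence $U \cdot M = U^{\tau'}$, rather than to argue abstractly. Comparing the defining shapes \eqref{formU} and \eqref{formUperp}, both $U$ and $U^{\tau'}$ are graphs of $\F_q$-linear maps $\F_{q^4}^2 \to \F_{q^4}^2$ sending $(x,y)$ (resp. $(z,t)$) to the last two coordinates. For $U$ the map is $(x,y) \mapsto (x^q + y^{q^2},\, x^{q^2} + y^q + y^{q^2})$, while for $U^{\tau'}$ it is $(z,t) \mapsto (z^{q^3} + z^{q^2} + t^{q^2},\, z^{q^2} + t^{q^3})$. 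Since a scattered system of dimension $2m$ in $V(4,q^m)$ corresponds to an MRD code, and both are $(2,3)_q$-evasive linear cutting blocking sets of the same parameters, it is natural to expect them to be in the same $\mathrm{GL}(4,q^4)$-orbit; the task is to produce the transformation.

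First I would look for an equivalence that respects the block structure, i.e. a matrix acting by a combination of coordinate permutations (swapping the two ``free'' coordinates with the two ``dependent'' ones, or swapping within each pair) together with componentwise semilinear substitutions. The key observation is that applying the Frobenius $\theta\colon a \mapsto a^{q^2}$ componentwise to $U$, or composing with a suitable coordinate-permuting matrix, should transport the map $(x,y)\mapsto(x^q+y^{q^2}, x^{q^2}+y^q+y^{q^2})$ into the map defining $U^{\tau'}$. Concretely, I would set up the ansatz $U \cdot M = U^{\tau'}$ with $M$ built from the $2\times 2$ block permutation interchanging coordinates $\{1,2\}$ with $\{3,4\}$ (motivated by the fact that $U^{\tau'}$ expresses the \emph{first} pair in terms of the adjoints applied to the \emph{second}, mirroring the graph relation of $U$ read backwards), and then verify that the induced relation among the $\F_q$-linear maps and their adjoints holds. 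Here the adjoint of $x \mapsto x^{q^i}$ with respect to $\Tr_{q^4/q}(xy)$ is $x \mapsto x^{q^{4-i}}$, which matches the exponents appearing in \eqref{formUperp}: the maps defining $U^{\tau'}$ are precisely assembled from the adjoints $g_2^\top, h_2^\top, h_1^\top, g_1^\top$ of the maps defining $U$, as recorded in the general formula for $X^{\tau'}$ just above the statement.

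The cleanest route is therefore to identify the $\F_q$-linear maps $h_1(x)=x^q$, $h_2(y)=y^{q^2}$, $g_1(x)=x^{q^2}$, $g_2(y)=y^q+y^{q^2}$ defining $U$, compute their adjoints $h_1^\top(x)=x^{q^3}$, $h_2^\top(y)=y^{q^2}$, $g_1^\top(x)=x^{q^2}$, $g_2^\top(y)=y^{q^3}+y^{q^2}$ using the rule $(\cdot^{q^i})^\top = \cdot^{q^{4-i}}$ and linearity, substitute into $U^{\tau'}=\{(z,t,-g_2^\top(z)-h_2^\top(t),-h_1^\top(t)-g_1^\top(z))\}$ (with signs trivial in characteristic $2$), and check that the resulting system equals the stated form \eqref{formUperp}. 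Having confirmed \eqref{formUperp}, I would then produce the explicit $M$: write both systems as images of $\F_{q^4}^2$ under $8\times 4$-type parametrizations and solve the linear-algebra relation over $\F_{q^4}$ for the change of basis, exploiting a suitable relabeling of the two free parameters (e.g. $(x,y) \leftrightarrow (t,z)$ up to a Frobenius twist) so that the dependent coordinates line up. Finally I would confirm $M$ is invertible, which is immediate once it is seen to be a permutation-type matrix possibly composed with a Frobenius-diagonal action.

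The main obstacle I anticipate is matching the \emph{mixed} terms correctly: the map $g_2(y)=y^q+y^{q^2}$ for $U$ and the appearance of $z^{q^3}+z^{q^2}$ and $z^{q^2}+t^{q^3}$ in $U^{\tau'}$ mean the two coordinate functions are genuinely entangled, so a pure coordinate permutation will not suffice and one must combine it with a componentwise field automorphism (a power of Frobenius) to align the exponents. The delicate bookkeeping is to choose the single automorphism and the single permutation that simultaneously send all four exponent-patterns $\{q,q^2\}$ of $U$ onto the patterns $\{q^2,q^3\}$ of $U^{\tau'}$; once the right pair $(\text{permutation}, \text{Frobenius power})$ is guessed, the verification reduces to a finite check of monomial exponents modulo $4$, which is routine. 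I would present the discovered $M$ explicitly and then verify $U\cdot M = U^{\tau'}$ by a direct substitution.
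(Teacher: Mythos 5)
Your overall strategy (exhibit an explicit $M\in\GL(4,q^4)$ with $U\cdot M=U^{\tau'}$, after confirming \eqref{formUperp} via the adjoint rule $(\cdot^{q^i})^\top=\cdot^{q^{4-i}}$) is the same as the paper's, and the adjoint computation you describe is correct. The gap is in the class of candidate matrices you commit to searching. You propose $M$ of ``permutation-type, possibly composed with a Frobenius-diagonal action,'' and you plan to align the exponent patterns by choosing a single coordinate permutation and a single power of Frobenius. Two things go wrong. First, a componentwise power of Frobenius on the ambient space $\F_{q^4}^4$ is only $\F_q$-semilinear, so it is not an element of $\GL(4,q^4)$ and cannot legitimately be part of the claimed equivalence; worse, it is useless here anyway, since reparametrizing $x\mapsto x^{q^j}$, $y\mapsto y^{q^j}$ shows that componentwise Frobenius stabilizes both $U$ and $U^{\tau'}$ setwise. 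Second, no monomial (permutation-times-diagonal) matrix does the job: the equivalence actually requires a matrix that genuinely mixes coordinates. The paper uses
\[
M=\begin{pmatrix}
0 & 1 & 1 & 0\\
1 & 0 & 1 & 1\\
0 & 1 & 0 & 1\\
1 & 0 & 1 & 0
\end{pmatrix},
\]
whose rows have several nonzero entries, and the induced reparametrization of the free coordinates is $z=x^q+y+y^{q^2}$, $t=x+x^q+x^{q^2}+y^q$ --- an $\F_q$-linear bijection of $\F_{q^4}^2$ built from \emph{sums} of Frobenius powers of both variables, not a relabeling ``$(x,y)\leftrightarrow(t,z)$ up to a Frobenius twist.''

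The key idea you are missing is precisely this decoupling: the ambient matrix $M$ must be $\F_{q^4}$-linear, but the induced map on the parameters $(x,y)\mapsto(z,t)$ only needs to be an $\F_q$-linear bijection, so it may (and here must) involve nontrivial $q$-polynomials. Once you allow $M$ to range over all of $\GL(4,q^4)$ (in fact $0$--$1$ matrices over $\F_2$ suffice) and allow the parameter change to be any invertible $q$-polynomial map, the verification is the routine substitution you describe; but your ``finite check of monomial exponents modulo $4$'' over the (permutation, Frobenius power) search space would terminate without finding a valid $M$.
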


\begin{proof}
Straightforward computations show that 
\[
\begin{pmatrix}
0 & 1 & 1 & 0\\
1 & 0 & 1 & 1\\
0 & 1 & 0 & 1\\
1 & 0 & 1 & 0
\end{pmatrix}
\begin{pmatrix}
x\\
y\\
x^q+y^{q^2}\\
x^{q^2}+y^q+y^{q^2}
\end{pmatrix}=
\begin{pmatrix}
z\\
t\\
z^{q^{3}}+z^{q^{2}}+t^{q^{2}}\\
z^{q^{2}}+t^{q^{3}}
\end{pmatrix},
\]
putting
$$ z= x^q+y+y^{q^2},\qquad t=x+x^q+x^{q^2}+y^q.$$
\end{proof}

\begin{cor}\label{cor:equivalentTracedual}
 For any $\mC\in \Psi([U])$ and $\cD\in\Psi([U^{\tau'}])$, we have that $\mC$ and $\cD$ are equivalent. In other words, $$\Psi([U])=\Psi([U^{\tau'}]).$$
\end{cor}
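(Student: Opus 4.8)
The plan is to read this corollary off as a purely formal consequence of Proposition \ref{equivperp} together with the system--code dictionary of Theorem \ref{thm:correspondence_codes_systems}; essentially all the mathematical content has already been spent in proving that $U$ and $U^{\tau'}$ are $\mathrm{GL}(4,q^4)$-equivalent, and what remains is bookkeeping. The crucial observation I would make first is that the $\mathrm{GL}(4,q^4)$-equivalence appearing in Proposition \ref{equivperp} is \emph{exactly} the equivalence relation on $[8,4]_{q^4/q}$ systems whose classes form the domain of $\Psi$: two such systems $U_1,U_2$ are equivalent precisely when some $A\in\mathrm{GL}(4,q^4)$ satisfies $U_1\cdot A=U_2$.

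Next I would translate the matrix identity of Proposition \ref{equivperp} into this framework. That proposition exhibits a constant $4\times4$ matrix $M$ with entries in $\F_2\subseteq\F_{q^4}$ carrying the generic column vector of $U$ to the generic column vector of $U^{\tau'}$ under the indicated substitution. Transposing turns the left action by $M$ on columns into the right action $U\cdot M^\top$ on the row vectors of $U$, so with $A:=M^\top$ one gets $U\cdot A\subseteq U^{\tau'}$; since $M$ is invertible (its reduction over $\F_2$ has full rank) and $U,U^{\tau'}$ share the same $\fq$-dimension $8$, this inclusion is in fact an equality, and $A\in\mathrm{GL}(4,q^4)$. Hence $U$ and $U^{\tau'}$ lie in one and the same equivalence class of systems, i.e. $[U]=[U^{\tau'}]$.

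Finally I would apply $\Psi$. Because $\Psi$ is well-defined on equivalence classes by Theorem \ref{thm:correspondence_codes_systems}, the equality $[U]=[U^{\tau'}]$ forces $\Psi([U])=\Psi([U^{\tau'}])$, which is a single equivalence class of nondegenerate $[8,4]_{q^4/q}$ codes. Any $\mC\in\Psi([U])$ and any $\cD\in\Psi([U^{\tau'}])$ therefore belong to the very same class and are consequently linearly equivalent, establishing the claim. The only genuine subtlety -- and the sole place an error could creep in -- is the left-versus-right action bookkeeping of the previous paragraph (together with the invertibility of $M$ guaranteeing that the image is all of $U^{\tau'}$ rather than a proper subspace); beyond that transposition there is no computation to carry out.
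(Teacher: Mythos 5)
Your proposal is correct and follows exactly the route the paper intends: the corollary is stated as an immediate consequence of Proposition \ref{equivperp} (which gives $[U]=[U^{\tau'}]$ as equivalence classes of $q$-systems) combined with the well-definedness of $\Psi$ on equivalence classes from Theorem \ref{thm:correspondence_codes_systems}. Your extra care with the transpose/right-action bookkeeping and the dimension argument for surjectivity is sound and only makes explicit what the paper leaves tacit.
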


\begin{remark}
 
In \cite[Section 3]{CsMPZ2019}, another type of duality has been introduced. 	
	Let $U$ be an $n$-dimensional $\F_q$-subspace of a vector space $V=V(k,q^m)$, with $n>k$. By \cite[Theorems 1, 2]{LuPo2004} (see also \cite[Theorem 1]{LuPoPo2002}), there is an embedding of $V$ in $Z=V(n,q^m)$ with $Z=V \oplus \Gamma$ for some $(n-k)$-dimensional $\F_{q^m}$-subspace $\Gamma$ such that
	$U=\langle W,\Gamma\rangle_{\F_{q}}\cap V$, where $W$ is a $n$-dimensional $\F_q$-subspace of $Z$, $\langle W\rangle_{\F_{q^m}}=Z$ and $\Gamma\cap V=W\cap \Gamma=\{{ 0}\}$.
	Then the quotient space $Z/\Gamma$ is isomorphic to $V$ and under this isomorphism $U$ is the image of the $\F_q$-subspace $W+\Gamma$ of $Z /\Gamma$.
	Now, let $\beta'\colon W\times W\rightarrow\F_{q}$ be a non-degenerate bilinear form on $W$. Then $\beta'$ can be extended to a non-degenerate bilinear form $\beta\colon Z\times Z\rightarrow\F_{q^m}$.
	Let $\perp$ and $\perp'$ be the orthogonal complement maps defined by $\beta$ and $\beta'$ on the lattice of $\F_{q^m}$-subspaces of $Z$ and of $\F_q$-subspaces of $W$, respectively.
	The $k$-dimensional $\F_q$-subspace $W+\Gamma^{\perp}$ of the quotient space $Z/\Gamma^{\perp}$  will be denoted by $\bar U$ and we call it the \textbf{Delsarte dual}{} of $U$ with respect to $\beta'$. By \cite[Remark 3.7]{CsMPZ2019}, up to $\GL(n,q)$-equivalence,  the Delsarte dual of an $n$-dimensional $\F_q$-subspace does not depend on the choice of the non-degenerate bilinear form on $W$. 
	
	\medskip

	Let $U$ be the $(2,3)_q$-evasive $\F_q$-subspace of $V=\F_{q^4}^4$ defined in \eqref{formU}. Using the notations above we can embed $V$ in $Z=\F_{q^4}^8$ in such a way that
	\begin{align*}
	    &V=\left\{\left(Y_0,Y_1,Y_2,0,Y_4,0,0,0\right): Y_0,Y_1,Y_2,Y_4\in\F_{q^4}\right\},\\
	    &W=\left\{(x,y,x^q,y^q,x^{q^2},y^{q^2},x^{q^3},y^{q^3}): x,y\in\F_{q^4}\right\},\\
	    &\Gamma=\left\{\left(0,0,X_2,X_3,X_2-X_3,-X_2,X_6,X_7\right): X_2,X_3,X_6,X_7\in\F_{q^4}\right\}.
	\end{align*}
	Straightforward computations show that $U=\langle W,\Gamma\rangle_{\F_{q}}\cap V$.

	Let $({\bf x},{\bf y}):=(x,y,x^q,y^q,x^{q^2},y^{q^2},x^{q^3},y^{q^3}), ({\bf u},{\bf v}):=(u,v,u^q,v^q,u^{q^2},v^{q^2},u^{q^3},v^{q^3})$ and  consider the bilinear form $\beta'$ on $W$ defined as
	$$\beta'(({\bf x},{\bf y}),({\bf u},{\bf v}))=\Tr_{q^4/q}(xv-uy).$$
	Then 
	$$\Gamma^\perp=\left\{\left(Z_0,Z_1,Z_2,Z_3,Z_2-Z_3,-Z_2,0,0\right): Z_0,Z_1,Z_2,Z_3\in\F_{q^4}\right\}.$$
	Hence, the Delsarte dual $\bar U=\langle W,\Gamma^\perp\rangle_{\F_{q}}\cap \Delta$, where $$\Delta=\left\{\left(0,0,X_2,X_3,0,0,X_6,X_7\right): X_2,X_3,X_6,X_7\in\F_{q^4}\right\},$$ turns out to be
	\begin{align*}
	  \bar U&=\left\{(0,0,x^q+y^{q^2},x^{q^2}+y^q+y^{q^2},0,0,x^{q^3},y^{q^3}): x,y\in\F_{q^4}\right\}\\
	  &=\left\{(0,0,t^{q^3}+z^{q^2},t^{q^2}+z^{q^3}+z^{q^2},0,0,t,z): x,y\in\F_{q^4}\right\},
	\end{align*}
    which is equivalent to $U$ (cf. \eqref{formUperp} and Proposition \ref{equivperp}).

\end{remark}

\subsection{The associated MRD code}

From a coding theoretic point of view, the existence of the $[8,4]_{q^4/q}$ system $U$ opens new perspectives and shows the existence of MRD codes ``which are better than others''. The last sentence is in quotes because there are several criteria for which a code might be thought to be better than another, and our purpose is not to underline a better performance in every aspect. What we mean with that sentence is that the generalized rank weights of this code are actually larger than the ones of the {already known }MRD codes with the same parameters. 

For the rest of this section we fix an $[8,4]_{q^4/q}$ code $\mC$ associated to $U$, and study its parameters, comparing them to those of the known constructions of MRD codes and to the best possible parameters that an $[8,4]_{q^4/q}$ code can have. Concretely, we can take $\mC$ to be the $[8,4]_{q^4/q}$ code with generator matrix 
$$\begin{pmatrix}
\alpha_1 & \alpha_2 & \alpha_3 & \alpha_4 & 0 & 0 & 0 & 0 \\
0 & 0 & 0 & 0 & \beta_1 & \beta_2 & \beta_3 & \beta_4 \\
\alpha_1^q & \alpha_2^q & \alpha_3^q & \alpha_4^q & \beta_1^{q^2} & \beta_2^{q^2} & \beta_3^{q^2} & \beta_4^{q^2} \\
\alpha_1^{q^2} & \alpha_2^{q^2} & \alpha_3^{q^2} & \alpha_4^{q^2} & \beta_1^q+\beta_1^{q^2} & \beta_2^q+\beta_2^{q^2} & \beta_3^q+\beta_3^{q^2} & \beta_4^q+\beta_4^{q^2} 
\end{pmatrix}\in \F_{q^4}^{4\times8},$$
where $(\alpha_1,\alpha_2,\alpha_3,\alpha_4)$ and $(\beta_1,\beta_2,\beta_3,\beta_4)$ are both $\fq$-bases of $\F_{q^4}$.

The following is just a rewriting of Theorem \ref{thm:Uscattered} and Theorem \ref{thm:Uevasive}, together with the fact that codes associated to scattered $[\frac{km}{2},k]_{q^m/q}$ systems are MRD, i.e. their parameters meet \eqref{eq:singleton} with equality; see \cite[Theorem 3.2]{csajbok2017maximum}.

\begin{prop}\label{prop:Cgenweights}
 Let $q=2^h$, with $h \equiv 1 \pmod 2$. Then $\mC$ is an $[8,4,(3,5,7,8)]_{q^4/q}$ code. In other words, $\mC$ is an $[8,4]_{q^4/q}$ MRD code, whose generalized rank weights are 
 $$ \dd_{\rk, 1}(\mC)=3, \quad \dd_{\rk,2}(\mC)=5, \quad \dd_{\rk,3}(\mC)=7, \quad \dd_{\rk,4}(\mC)=8. $$
\end{prop}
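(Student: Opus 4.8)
The plan is to compute the four generalized rank weights $\dd_{\rk,j}(\mC)$ for $j=1,\ldots,4$ by combining the already-established geometric properties of $U$ with the dictionary of Theorem \ref{thm:charact_evasive_genweights}, which relates the $(h,r)_q$-evasiveness of $U$ to the generalized rank weights of the associated code $\mC$. Since $\mC$ is nondegenerate, we immediately have $\dd_{\rk,4}(\mC)=n=8$, giving the last value for free. The remaining work is to pin down $d_1,d_2,d_3$.

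First I would extract $d_1$ and $d_3$ from scatteredness. By Theorem \ref{thm:Uscattered}, for $h\equiv 1\pmod 2$ (which is the hypothesis here) the system $U$ is (maximum) scattered, i.e. $1$-scattered, which is precisely $(1,1)_q$-evasive. Applying Theorem \ref{thm:charact_evasive_genweights} with $h=1$, $r=1$, $k=4$, $n=8$ gives $\dd_{\rk,3}(\mC)=\dd_{\rk,k-h}(\mC)=n-r=7$, and one checks $U$ is not $(1,0)_q$-evasive (a scattered space of positive dimension certainly meets some point), so this is an equality and $d_3=7$. Indeed, this is also the content of the cited fact \cite[Theorem 3.2]{csajbok2017maximum} that codes associated to maximum scattered $[\tfrac{km}{2},k]_{q^m/q}$ systems are MRD: the MRD property forces the minimum distance $d_1=n-k+1=8-4+1=5$ via the Singleton bound \eqref{eq:singleton}. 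Wait — I must be careful here: for the parameters $[8,4]_{q^4/q}$ the bound \eqref{eq:singleton} gives $mk=16\leq \min\{m(n-d_1+1),n(m-d_1+1)\}$, and the binding term is $n(m-d_1+1)=8(5-d_1)$, so equality forces $d_1=3$. Thus $d_1=3$, consistent with $U$ being $(k-1,n-m-1)_q$-evasive, i.e. $(3,3)_q$-evasive, which via Theorem \ref{thm:charact_evasive_genweights} (with $h=3$, $r=3$) yields $\dd_{\rk,1}(\mC)=n-r=8-3=\dots$; one must match the indexing $\dd_{\rk,k-h}=\dd_{\rk,1}$ correctly to recover $d_1=3$.

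The main obstacle, and the genuinely new input, is $d_2=5$. This is exactly where Theorem \ref{thm:Uevasive} enters: $U$ is $(2,3)_q$-evasive. Applying Theorem \ref{thm:charact_evasive_genweights} with $h=2$, $r=3$, I get $\dd_{\rk,k-h}(\mC)=\dd_{\rk,2}(\mC)\geq n-r=8-3=5$. For the reverse inequality I would argue that $U$ is not $(2,2)_q$-evasive — equivalently that some $2$-dimensional $\Fm$-subspace meets $U$ in $\fq$-dimension exactly $3$ — so that by the ``in particular'' clause of Theorem \ref{thm:charact_evasive_genweights} the bound is tight and $\dd_{\rk,2}(\mC)=n-r=5$. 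A clean way to secure this upper bound without a separate dimension count is to invoke monotonicity (Proposition \ref{prop:gen_weights_properties}, part (1)): since $d_1=3$ and $d_3=7$ with $d_1<d_2<d_3$, and since the bound of Proposition \ref{prop:bounds_genweights} with $s=2$ gives $d_2\leq \min\{n-k+2,\,2m,\,\tfrac{m}{n}(n-k)+m+1\}=\min\{6,8,7\}=6$, the value $d_2=5$ is forced once we know $d_2\geq 5$ and that $d_2$ cannot equal $6$ while still leaving room for $d_3=7$; combined with the evasiveness lower bound $d_2\geq 5$ this yields $d_2=5$.

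In summary, I would assemble the four weights as $d_1=3$ (MRD/Singleton), $d_2=5$ (from $(2,3)_q$-evasiveness via Theorem \ref{thm:Uevasive} and the translation of Theorem \ref{thm:charact_evasive_genweights}), $d_3=7$ (from scatteredness via Theorem \ref{thm:Uscattered}), and $d_4=8$ (nondegeneracy). The subtle bookkeeping is keeping the index translation $j=k-h$ of Theorem \ref{thm:charact_evasive_genweights} straight, and confirming that each evasiveness statement is \emph{sharp} (i.e. $U$ fails the next stronger evasiveness) so that each lower bound is actually an equality; the sharpness for $d_2$ is the substantive point, and it follows either from the Wei-type duality and monotonicity of Proposition \ref{prop:gen_weights_properties} or from directly exhibiting a $2$-dimensional $\Fm$-subspace of weight $3$.
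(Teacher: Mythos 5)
Your overall strategy is the paper's: translate the evasiveness properties of $U$ into generalized rank weights via Theorem \ref{thm:charact_evasive_genweights}. The values $d_4=8$ (nondegeneracy), $d_3=7$ (scatteredness, i.e.\ $(1,1)_q$-evasiveness) and $d_1=3$ (via the Singleton bound once the MRD property from \cite[Theorem 3.2]{csajbok2017maximum} is granted --- the paper instead uses that a scattered space is $(3,5)_q$-evasive by \cite[Theorem 4.2]{BL2000}, but your route is equally valid) are all correctly obtained. One aside is wrong, though: $U$ is certainly not $(3,3)_q$-evasive, since every $\F_{q^4}$-hyperplane $H$ satisfies $\dim_{\fq}(H\cap U)\geq \dim_{\fq}(H)+\dim_{\fq}(U)-16=4$; the evasiveness statement matching $d_1=3$ is $(3,5)_q$-evasiveness, not $(3,3)_q$-evasiveness.

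The genuine gap is the upper bound $d_2\leq 5$, which you yourself flag as ``the substantive point'' but never actually establish. What is needed is that $U$ is not $(2,2)_q$-evasive, i.e.\ not $2$-scattered. The monotonicity shortcut you propose does not work: $d_2=6$ together with $d_3=7$ is perfectly compatible with strict monotonicity and with the bound $d_2\leq 6$ of Proposition \ref{prop:bounds_genweights}, so nothing there rules out $d_2=6$. Wei-type duality alone does not rule it out either, unless you additionally invoke that $\mC^\perp$ is MRD, hence $d_1^\perp=3$, so that $9-d_1^\perp=6$ lies in the set complementary to $\{d_1,\ldots,d_4\}$ and therefore $d_2\neq 6$ --- but you would have to spell this out. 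The paper closes the gap in one line: a $2$-scattered $[n,4]_{q^4/q}$ system satisfies $n\leq \frac{km}{3}=\frac{16}{3}$ by \cite[Theorem 2.3]{CsMPZ2019}, so the $8$-dimensional $U$ cannot be $2$-scattered, whence $d_2=5$ by the ``in particular'' clause of Theorem \ref{thm:charact_evasive_genweights}. With that one sentence added (or with an explicit $2$-dimensional subspace of weight $3$ exhibited), your argument is complete and coincides in substance with the paper's.
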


\begin{proof}
%\ale{to be written}
Recall that, from Theorem \ref{thm:charact_evasive_genweights}, if $\mC$ is an $\Fmk$ code, then $\dd_{\rk,k-h}(\mC) = n-r$ if and only if $\mU$ is $(h,r)_q$-evasive but not $(h,r-1)_q$-evasive. Applying this theorem with $k=4$ and $n=8$, since $U$ is scattered, $U$ is $(1,1)_q$-evasive and $(3,5)_q$-evasive \cite[Theorem 4.2]{BL2000}, and hence $\dd_{\rk,1}(\mC) = 3$ and $\dd_{\rk,3}(\mC) = 7$. Since $U$ is $(2,3)_q$-evasive but it is not  $2$-scattered,  $\dd_{\rk,2}(\mC) = 5$.

\end{proof}

 By the Singleton bound  \eqref{eq:singleton}, the minimum rank distance, that is the first generalized rank weight of $\mC$, attains the maximum possible value. In other words, $\mC$ is MRD. This is not a big news, since MRD codes with the same parameters were already known, and constructed as the direct sum of two $[4,2]_{q^4/q}$ MRD codes (e.g. Delsarte-Gabidulin codes). However, codes obtained as direct sum of MRD codes have generalized rank weights different from those of $\mC$. This is explained by the following general result.

\begin{prop}\label{prop:genweights_directsum}
 Let ${\mC}_1,{\mC}_2$ be two $[n,k,n-k+1]_{q^m/q}$ MRD codes. Then,  ${\mC}_1 \oplus {\mC}_2$ is a $[2n,2k]_{q^m/q}$ code whose  generalized rank weights are given by 
 $$ \dd_{\rk,i}({\mC}_1\oplus{\mC}_2)=\begin{cases} n-k+i & \mbox{ if } 1 \leq i \leq k, \\
 2n-2k+i & \mbox{ if } k+1 \leq i \leq 2k.\end{cases}$$
\end{prop}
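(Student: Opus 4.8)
The plan is to deduce the result from a general ``merge'' identity for the generalized rank weights of a direct sum, and then to feed in the fact that an MRD code has a fully determined weight hierarchy. First I would record this hierarchy: since each $\mC_i$ is an $[n,k,n-k+1]_{q^m/q}$ MRD code we have $\dd_{\rk,1}(\mC_i)=n-k+1$, so strict monotonicity (Proposition~\ref{prop:gen_weights_properties}(1)) gives $\dd_{\rk,s}(\mC_i)\ge n-k+s$, while the bound $d_s\le n-k+s$ of Proposition~\ref{prop:bounds_genweights} gives the opposite inequality; hence $\dd_{\rk,s}(\mC_i)=n-k+s$ for all $1\le s\le k$. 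I would also use the support description of the generalized rank weights: for a subcode $\cD\subseteq\mC$ set $\supp(\cD):=\sum_{c\in\cD}\supp(c)$, an $\fq$-subspace of $\fq^N$ (with $N$ the length). Since a Galois-closed $\cA$ satisfies $\dim_{\Fm}\cA=\dim_{\fq}(\cA\cap\fq^N)$ and $\cA=\langle\cA\cap\fq^N\rangle_{\Fm}$, and since $\langle\supp(\cD)\rangle_{\Fm}$ is the smallest Galois-closed space containing $\cD$, the definition of $\dd_{\rk,r}$ rewrites as
\[ \dd_{\rk,r}(\mC)=\min\{\dim_{\fq}\supp(\cD)\ :\ \cD\subseteq\mC,\ \dim_{\Fm}\cD=r\}. \]

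Writing $\mC=\mC_1\oplus\mC_2$ inside $V(n,q^m)\oplus V(n,q^m)$, so that supports live in $\fq^n\oplus\fq^n$, the core claim is the merge formula
\[ \dd_{\rk,r}(\mC_1\oplus\mC_2)=\min_{\substack{r_1+r_2=r\\ 0\le r_1,r_2\le k}}\big(\dd_{\rk,r_1}(\mC_1)+\dd_{\rk,r_2}(\mC_2)\big),\qquad \dd_{\rk,0}:=0. \]
The upper bound is the easy direction: for an admissible split $r=r_1+r_2$, pick $\cD_i\subseteq\mC_i$ of dimension $r_i$ with $\dim_{\fq}\supp(\cD_i)=\dd_{\rk,r_i}(\mC_i)$, and set $\cD:=\cD_1\oplus\cD_2$. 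As the rank support of a block vector $(c_1,c_2)$ is contained in $\supp(c_1)\oplus\supp(c_2)$, while $\cD$ already contains every $(c_1,0)$ and $(0,c_2)$, one gets $\supp(\cD)=\supp(\cD_1)\oplus\supp(\cD_2)$, whose dimension is $\dd_{\rk,r_1}(\mC_1)+\dd_{\rk,r_2}(\mC_2)$.

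The lower bound is the main obstacle. Given any $\cD\subseteq\mC_1\oplus\mC_2$ with $\dim_{\Fm}\cD=r$, let $\pi_1,\rho_1$ denote the projections onto the first block on codewords and on $\fq^n\oplus\fq^n$ respectively, and set $\cD_1:=\pi_1(\cD)\subseteq\mC_1$ and $\cK:=\cD\cap(0\oplus\mC_2)\subseteq\mC_2$, with $a:=\dim_{\Fm}\cD_1$ and $b:=\dim_{\Fm}\cK$; then $a+b=r$ by rank--nullity for $\pi_1|_{\cD}$. Using the row-space description $\supp(v)=\mathrm{rowsp}(M_v)$ of the rank support, I would verify the two identities $\rho_1(\supp(\cD))=\supp(\cD_1)$ and $\supp(\cD)\cap(0\oplus\fq^n)\supseteq\supp(\cK)$; rank--nullity for $\rho_1$ restricted to $\supp(\cD)$ then yields
\[ \dim_{\fq}\supp(\cD)\ge\dim_{\fq}\supp(\cD_1)+\dim_{\fq}\supp(\cK)\ge\dd_{\rk,a}(\mC_1)+\dd_{\rk,b}(\mC_2), \]
which is at least the asserted minimum. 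Checking these two support identities for $\rho_1$ is precisely the delicate point, since supports do not split additively for arbitrary (non block-diagonal) subcodes.

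Finally I would substitute $\dd_{\rk,s}(\mC_i)=n-k+s$ into the merge formula. For $1\le r\le k$ the split $(r_1,r_2)=(r,0)$ gives value $n-k+r$, whereas any split with $r_1,r_2\ge 1$ gives $2(n-k)+r\ge n-k+r$ because $n\ge k$; hence $\dd_{\rk,r}(\mC_1\oplus\mC_2)=n-k+r$. For $k+1\le r\le 2k$ the constraint $r_i\le k$ forces $r_1,r_2\ge 1$, so every admissible split gives the constant value $2(n-k)+r=2n-2k+r$; hence $\dd_{\rk,r}(\mC_1\oplus\mC_2)=2n-2k+r$. This is exactly the claimed hierarchy.
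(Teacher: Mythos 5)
Your argument is correct, but it follows a genuinely different route from the paper's. You prove a general ``merge'' formula $\dd_{\rk,r}(\mC_1\oplus\mC_2)=\min_{r_1+r_2=r}\bigl(\dd_{\rk,r_1}(\mC_1)+\dd_{\rk,r_2}(\mC_2)\bigr)$ via the support-based reformulation of the generalized rank weights, with the lower bound obtained from the projection/rank--nullity argument on $\supp(\cD)$; the identities you flag as delicate ($\rho_1(\supp(\cD))=\supp(\pi_1(\cD))$ and $\supp(\cD\cap(0\oplus\mC_2))\subseteq\supp(\cD)\cap(0\oplus\fq^n)$) do hold and follow from the row-space description exactly as you indicate, so the plan goes through. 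The paper instead never proves a direct lower bound beyond monotonicity: it establishes only the upper bound $d_i\leq n-k+i$ for $1\leq i\leq k$, geometrically, by intersecting the $q$-system $U_1\oplus U_2$ with subspaces of the form $V\oplus H_i$, deduces equality from strict monotonicity, and then obtains the top $k$ weights by applying the same argument to the dual code $\mC_1^\perp\oplus\mC_2^\perp$ (again a direct sum of MRD codes) and invoking Wei-type duality. Your approach buys more: the merge formula is valid for direct sums of arbitrary codes, not just MRD ones, and is self-contained in the sense of not needing duality; the cost is that you must import (or prove) the equivalence $\dd_{\rk,r}(\mC)=\min\{\dim_{\fq}\supp(\cD):\dim_{\Fm}\cD=r\}$, which the paper does not state, though your sketch of why it follows from the correspondence between Galois-closed spaces and $\fq$-subspaces of $\fq^n$ is the right one. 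The paper's route is shorter given the tools it has already assembled (monotonicity, Wei duality, the code/$q$-system dictionary).
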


\begin{proof}
Let us divide the proof in four steps. 

\begin{enumerate}
    \item First of all, by the monotonicity of Proposition \ref{prop:gen_weights_properties}(1), since $\mC_1$ and $\mC_2$ are MRD codes of length $n\leq m$, we must have
$$\dd_{\rk,i}(\mC_j)=n-k+i, \qquad \mbox{ for each } i \in \{1,\ldots,k\}, j \in\{1,2\}.$$
In particular, $\mC_1$ and $\mC_2$ are $[n,k,(n-k+1,n-k+2,\ldots,n)]_{q^m/q}$ codes. 

\item  Let us write $\cD:=\mC_1\oplus \mC_2$ and let 
$$d_i:=\dd_{\rk,i}(\cD),\quad \mbox{ for each } i \in \{1,\ldots, 2k\}.$$ Since every codeword in $\cD$ is of the form $(u \, \mid \, v)$, with $u\in \mC_1$, $v\in \mC_2$,  it is clear that the minimum rank distance of $\cD$ -- which is the first generalized rank weight -- coincides with the minimum among the minimum rank distances of $\mC_1$ and $\mC_2$, i.e.
 $$d_1=n-k+1.$$
 Now, let $V=V(k,q^m)$ and let $U_1, U_2$ be two $[n,k,(n-k+1,n-k+2,\ldots,n)]_{q^m/q}$ systems associated to $\mC_1$ and $\mC_2$, respectively. The $[2n,2k]_{q^m/q}$ system $U_1\oplus U_2 \subseteq V\oplus V=V(2k,q^m)$ is then associated to the code $\cD$. Now, for each $i \in \{2,\ldots,k\}$, consider an $\Fm$-subspace of codimension $i$ in $V\oplus V$, given by $\Pi_i=V\oplus H_i$, where $H_i$ is an $\Fm$-subspace of codimension $i$ in $V$ such that 
 $$\dd_{\rk,i}(\mC_2)=n-\dim_{\fq}(U_2\cap H_i)=n-k+i.$$ Then, 
 for each $i \in \{2,\ldots,k\}$ we have  
 \begin{align*} d_i &= 2n-\max\{\dim_{\fq}((U_1\oplus U_2)\cap \Pi) \,:\, \Pi \subseteq V\oplus V \mbox{ with } \dim_{\Fqm}(\Pi)=2k-i \}  \\
 &\leq 2n-\dim_{\fq}((U_1\oplus U_2)\cap \Pi_i)\\
 &=2n-\dim(U_1)-\dim(U_2\cap H_i)=n-k+i. \end{align*}
 On the other hand, since $d_1=n-k+1$, by the monotonicity of Proposition \ref{prop:gen_weights_properties}(1) we have $d_i\geq n-k+i$ for each $i \in \{2,\ldots,k\}$, and thus equality.
 
 \item Let us now consider the dual code $\cD^\perp$, which is a $[2n,2n-2k,(d_1^\perp,\ldots, d_{2n-2k}^\perp)]_{q^m/q}$ code. It is easy to see that $\cD^\perp=\mC_1^\perp \oplus \mC_2^\perp$. However, since the dual of an MRD is itself MRD,  by step (a)
 $\mC_1^\perp$ and $\mC_2^\perp$ are both $[n,n-k,(k+1,\ldots,n)]_{q^m/q}$ codes, and by step (b) the first $n-k$ generalized rank weights of $\cD^\perp$ are 
 $$d_i^\perp=k+i, \quad \mbox{ for each } i \in \{1,\ldots, n-k\}.$$
 
 \item By the Wei-type duality of Proposition \ref{prop:gen_weights_properties}(2),  we  have 
 $$ \{d_{k+1},\ldots,d_{2k} \}\cup \{2n+1-d_{n-k+1}^\perp,\ldots,2n+1-d_{2n-2k}^\perp \}=\{1,\ldots, n-k\}\cup \{2n-k+1,\ldots,2n\}, $$
 which forces, by the monotonicity of Proposition \ref{prop:gen_weights_properties}(1), to have 
 $$ d_{k+i}=2n-k+i, \quad \mbox{ for each } i \in \{1,\ldots,k\},$$
 concluding the proof.
  
 \end{enumerate}
\end{proof}

The following result immediately follows from Proposition \ref{prop:genweights_directsum}.

\begin{cor}\label{cor:directsum84}
If $\mC_1,\mC_2$ are two  $[4,2]_{q^4/q}$ MRD codes, then ${\mC}_1\oplus {\mC}_2$ is an $[8,4,(3,4,7,8)]_{q^4/q}$ code.  In other words, ${\mC}_1\oplus {\mC}_2$ is an $[8,4]_{q^4/q}$ MRD code, whose generalized rank weights are 
 $$ \dd_{\rk, 1}(\mC)=3, \quad \dd_{\rk,2}(\mC)=4, \quad \dd_{\rk,3}(\mC)=7, \quad \dd_{\rk,4}(\mC)=8. $$
\end{cor}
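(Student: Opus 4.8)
The plan is to apply Proposition \ref{prop:genweights_directsum} verbatim with the specialized parameters and then confirm the MRD property separately; the statement asserts that the result ``immediately follows,'' so the work is essentially a substitution.

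First I would identify the parameters. A $[4,2]_{q^4/q}$ MRD code has $n=4$, $k=2$, and $m=4$. Being MRD, its minimum rank distance equals $n-k+1=3$, so each $\mC_j$ is in particular a $[4,2,3]_{q^4/q}$ code, which is exactly the hypothesis required to invoke Proposition \ref{prop:genweights_directsum}. Substituting $n=4$, $k=2$ into the formula there, the direct sum $\mC_1\oplus\mC_2$ is a $[2n,2k]_{q^4/q}=[8,4]_{q^4/q}$ code, and its generalized rank weights are $\dd_{\rk,i}=n-k+i=2+i$ for $1\leq i\leq 2$ (giving $d_1=3$, $d_2=4$) and $\dd_{\rk,i}=2n-2k+i=4+i$ for $3\leq i\leq 4$ (giving $d_3=7$, $d_4=8$). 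This produces the claimed weight sequence $(3,4,7,8)$.

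It then remains to verify that $\mC_1\oplus\mC_2$ is MRD. Since the first generalized rank weight coincides with the minimum rank distance, we have $d_1=3$; evaluating the right-hand side of the Singleton bound \eqref{eq:singleton} at $n=8$, $k=4$, $m=4$, $d_1=3$ yields $\min\{4(8-3+1),\,8(4-3+1)\}=\min\{24,16\}=16=mk$, so \eqref{eq:singleton} is met with equality and the code is MRD.

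There is no genuine obstacle here, since every step is a direct specialization of an already-proved proposition. The only point requiring a moment of attention is determining which term of the minimum in the Singleton bound is active: here it is the $n(m-d_1+1)$ term rather than the $m(n-d_1+1)$ term, reflecting the regime $n>m$ in which these codes live.
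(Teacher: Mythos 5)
Your proposal is correct and matches the paper, which derives this corollary as an immediate specialization of Proposition \ref{prop:genweights_directsum} with $n=4$, $k=2$, $m=4$. The additional verification of the MRD property via the Singleton bound \eqref{eq:singleton}, including the observation that the active term is $n(m-d_1+1)=16$ in the regime $n>m$, is accurate and consistent with the paper's discussion.
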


Let us fix now $\mathcal D_1,\mathcal D_2$ to be two arbitrary $[4,2]_{q^4/q}$ MRD codes. Hence, comparing the results of Proposition \ref{prop:Cgenweights} and Corollary \ref{cor:directsum84}, we have that first, third, and fourth generalized rank weights of the  codes $\mC$ and $\mathcal D_1\oplus\mathcal D_2$ coincide, but 
$$ \dd_{\rk,2}(\mC)=5>4=\dd_{\rk,2}(\mathcal D_1\oplus\mathcal D_2),$$
explaining finally what we meant with the claim that the MRD code $\mC$ is ``better'' than the MRD code $\mathcal D_1\oplus\mathcal D_2$. 

The second difference that we want to underline between $\mC$ and $\mathcal D_1\oplus\mathcal D_2$ concerns their minimality. Indeed, we obtain the following.

\begin{cor}
 The code $\mC$ is minimal, while the code $\mathcal D_1\oplus\mathcal D_2$ is not.
\end{cor}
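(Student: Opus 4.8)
The plan is to deduce everything from the characterization of minimality in terms of the second generalized rank weight established in Theorem \ref{thm:minimal_secondweight}, specialized to the present parameters. Here we are working over the field extension $\F_{q^4}/\fq$, so $m=4$ and the relevant threshold for minimality is $d_2 \geq m+1 = 5$. Since Theorem \ref{thm:minimal_secondweight} requires the code to be nondegenerate, the first thing I would check is that this hypothesis is met for both codes: each of $\mC$ and $\mathcal D_1\oplus\mathcal D_2$ is an $[8,4]_{q^4/q}$ MRD code, and hence satisfies $\dd_{\rk,4}=8=n$, which is precisely the nondegeneracy condition. Thus Theorem \ref{thm:minimal_secondweight} applies to both.

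With nondegeneracy in place, the argument reduces to reading off the second generalized rank weight already computed in the preceding results and comparing it with $m+1=5$. For $\mC$, Proposition \ref{prop:Cgenweights} gives $\dd_{\rk,2}(\mC)=5\geq m+1$, so Theorem \ref{thm:minimal_secondweight} immediately yields that $\mC$ is minimal. For the direct sum, Corollary \ref{cor:directsum84} gives $\dd_{\rk,2}(\mathcal D_1\oplus\mathcal D_2)=4<5=m+1$, so the same theorem shows that $\mathcal D_1\oplus\mathcal D_2$ fails the minimality criterion and is therefore not minimal.

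There is essentially no obstacle to overcome: all the substantive work has already been carried out in the generalized-weight computations of Proposition \ref{prop:Cgenweights} and Corollary \ref{cor:directsum84}, and the statement is a direct corollary of Theorem \ref{thm:minimal_secondweight}. The one point worth flagging is that in the case of $\mC$ the threshold is attained with equality, $d_2=5=m+1$; since the inequality $d_2\geq m+1$ in Theorem \ref{thm:minimal_secondweight} is non-strict, this equality still certifies minimality. This is exactly the borderline behaviour that distinguishes $\mC$ from the direct-sum construction, whose second generalized rank weight drops by one to $d_2=4=m$ and thereby just misses the minimality threshold.
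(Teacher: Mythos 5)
Your proof is correct and follows exactly the paper's route: the paper also deduces the corollary immediately from Theorem \ref{thm:minimal_secondweight}, using the values $\dd_{\rk,2}(\mC)=5\geq m+1$ and $\dd_{\rk,2}(\mathcal D_1\oplus\mathcal D_2)=4<m+1$ from Proposition \ref{prop:Cgenweights} and Corollary \ref{cor:directsum84}. Your additional check of nondegeneracy (via $\dd_{\rk,4}=8=n$) is a sensible detail the paper leaves implicit.
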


\begin{proof}
 It immediately follows from Theorem \ref{thm:minimal_secondweight}.
\end{proof}

\begin{table}[!ht]
\begin{center}
\begin{tabular}{c|c|c|c|c|c|c}
    Code  & is MRD? & $\dd_{\rk,1}$ & $\dd_{\rk,2}$ & $\dd_{\rk,3}$ & $\dd_{\rk,4}$ & is minimal? \\
     \hline 
     $\mC$ & yes & $3$ & $5$ & $7$ & $8$ & yes \\
          $\mathcal D_1\oplus \mathcal D_2$ & yes & $3$ & $4$ & $7$ & $8$ & no \\
\end{tabular}
\caption{The table recaps the properties and the generalized rank weights of the code $\mC$ compared to those of  $[8,4]_{q^4/q}$ MRD codes obtained as direct sum of two $[4,2]_{q^4/q}$ MRD codes.}\label{fig}
\end{center}

\end{table}

\begin{remark}
Apart from being better than  those of {the type} $\cD_1\oplus\cD_2$, the generalized rank weights of $\mC$ are actually the largest possible ones for the given parameters $(n,k,m)=(8,4,4)$. Indeed, by Proposition \ref{prop:bounds_genweights}, it is immediate to see that $\dd_{\rk,1}(\mC)$, $\dd_{\rk,3}(\mC)$, and $\dd_{\rk,4}(\mC)$  are the largest possible values. Furthermore, if there exists an $[8,4]_{q^4/4}$ code $\cD$ such that $\dd_{\rk,2}(\cD)\geq 6${, then Theorem \ref{thm:charact_evasive_genweights} yields the $2$-scatteredness of the $[8,4]_{q^4/4}$ system $W$ associated to $\cD$.} However this is not possible due to the bound on $h$-scattered $[n,k]_{q^m/q}$ systems given by
$$n \leq \frac{km}{h+1}$$
proved in \cite[Theorem 2.3]{CsMPZ2019}, which in this case would read as {$8\leq \frac{16}{3}$}, a contradiction.
\end{remark}

 One can see   by Proposition \ref{prop:gen_weights_properties}(2) that the dual code $\mC^\perp$ is also an $[8,4,(3,5,7,8)]_{q^4/q}$ code. However, we can say more about the dual of $\mC$. 
 
 \begin{prop}
  The dual code $\mC^\perp$ is equivalent to $\mC$.
 \end{prop}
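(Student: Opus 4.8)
The plan is to show that $\mC^\perp$ is equivalent to $\mC$ by exploiting the geometric correspondence of Theorem~\ref{thm:correspondence_codes_systems} together with the explicit trace-dual computation already carried out in the previous subsection. The key observation is that the operation of taking the dual code corresponds, on the level of $q$-systems, to taking the trace-dual $U^{\tau'}$. More precisely, since $\mC \in \Psi([U])$, the dual code $\mC^\perp$ should be (equivalent to) a code associated to $U^{\tau'}$, i.e. $\mC^\perp \in \Psi([U^{\tau'}])$. Granting this compatibility between the two duality notions, the statement follows immediately: by Proposition~\ref{equivperp} the $q$-systems $U$ and $U^{\tau'}$ are $\mathrm{GL}(4,q^4)$-equivalent, hence $\Psi([U]) = \Psi([U^{\tau'}])$ by Corollary~\ref{cor:equivalentTracedual}, and therefore $\mC$ and $\mC^\perp$ lie in the same equivalence class of codes.

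The steps I would carry out, in order, are the following. First, I would invoke Corollary~\ref{cor:equivalentTracedual}, which already records that $\Psi([U]) = \Psi([U^{\tau'}])$; this is the geometric heart of the argument and is available for free. Second, I would establish the bridge between the algebraic dual $\mC^\perp$ and the system $U^{\tau'}$: namely, that if a nondegenerate code $\mC$ is associated to a $q$-system $U$, then its dual $\mC^\perp$ is associated to the trace-dual system $U^{\tau'}$. This is the standard compatibility between the code–system correspondence and the respective duality operations, defined via the bilinear form $\sigma'$ on $V(k,q^m)$ and the standard inner product on $V(n,q^m)$; it can be cited from the literature on the geometry of rank-metric codes (e.g.\ the works of Randrianarisoa and of Alfarano--Borello--Neri--Ravagnani) or verified directly on generator/parity-check matrices. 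Third, I would combine the two: $\mC^\perp \in \Psi([U^{\tau'}]) = \Psi([U]) \ni \mC$, and since $\Psi$ produces a single equivalence class, $\mC^\perp$ and $\mC$ are equivalent as rank-metric codes.

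The main obstacle I anticipate is the second step, namely making precise and justifying that code duality corresponds exactly to system trace-duality under the maps $\Phi, \Psi$. One must check that the two nondegenerate bilinear forms are chosen compatibly, so that the orthogonal complement of the row space defining $\mC$ matches the system $U^{\tau'}$ up to $\mathrm{GL}(k,q^m)$-equivalence. If this correspondence is not already stated as a citable result in the excerpt, the cleanest route is to verify it concretely: take the explicit generator matrix of $\mC$ given earlier in this subsection, compute a parity-check matrix (equivalently a generator matrix of $\mC^\perp$), and read off the associated $q$-system, confirming it equals the $U^{\tau'}$ displayed in \eqref{formUperp} up to equivalence. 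Once this identification is in hand, the remainder is a one-line deduction from Corollary~\ref{cor:equivalentTracedual}.
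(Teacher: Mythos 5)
Your proposal matches the paper's proof in both strategy and execution: the paper fixes a normal basis and its trace-dual basis, writes the explicit generator matrix of $\mC$, computes a generator matrix of $\mC^\perp$ by direct calculation, reads off that the associated $q$-system is exactly $U^{\tau'}$ from \eqref{formUperp}, and concludes via Corollary \ref{cor:equivalentTracedual} --- precisely the ``cleanest route'' you describe. One caution: the ``standard compatibility'' between code duality and trace-duality of $q$-systems that you hoped to cite does not exist in general (for an $[n,k]_{q^m/q}$ code, $\mC^\perp$ is associated to a system in $V(n-k,q^m)$ of $\fq$-dimension $n$, whereas $U^{\tau'}\subseteq V(k,q^m)$ has $\fq$-dimension $km-n$, so the parameters only match when $n=km/2$ and $n=2k$), so the explicit matrix computation you offer as a fallback is in fact the necessary step, not merely an optional verification.
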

 
 \begin{proof}
  Let us fix a normal $\fq$-basis  ${\underline{\alpha}}:=(\alpha, \alpha^{q}, \alpha^{q^2},\alpha^{q^3})$ of $\F_{q^4}$, and let $\underline{\gamma}:=(\gamma,\gamma^q,\gamma^{q^2},\gamma^{q^3})$ be its dual basis with respect to the trace. A generator matrix for (a code in the equivalence class of) $\mC$ is given by
  $$G=\begin{pmatrix} \underline{\alpha} & 0 \\ 0 & \underline{\alpha} \\
   \underline{\alpha}^q & \underline{\alpha}^{q^2} \\ \underline{\alpha}^{q^2} & \underline{\alpha}^q+\underline{\alpha}^{q^2} \end{pmatrix} \in \F_{q^4}^{4 \times 8}, $$
 where $\underline{\alpha}^{q^i}$ denotes the vector obtained from $\underline{\alpha}$ by raising each entry to the $({q^i})$-th power. Straightforward computations show that the dual of $\mC$ is generated by the matrix
   $$H=\begin{pmatrix}  0 & \underline{\gamma}^{q^3} \\ \underline{\gamma}^{q^3} & 0 \\
    \underline{\gamma}^{q} & \underline{\gamma}^{q}+\underline{\gamma}^{q^2}  \\ \underline{\gamma}^{q^2} & \underline{\gamma}^{q} \end{pmatrix}\in \F_{q^4}^{4 \times 8}.$$
    From this, it is immediate to see that the (equivalence class of the) $q$-system associated to $\mC^\perp$ is $U^{\tau'}$ defined in \eqref{formUperp}. The claim now follows from Corollary \ref{cor:equivalentTracedual}.
 \end{proof}

\section{Conclusion and open problems}\label{sec:conclusions}

In this work we analyzed short minimal rank-metric codes, or, equivalently, small linear cutting blocking sets. We managed to characterize them in terms of the second generalized weight{, showing that } a rank-metric code  is minimal if and only if its second generalized rank weight is larger than the underlying field extension degree (Theorem \ref{thm:minimal_secondweight}). This result was proved using the evasiveness properties of the associated $q$-system that characterize a linear cutting blocking set (Theorem \ref{Thm:caract}). Hence, motivated by finding small linear cutting blocking sets, we {provide } a construction of the smallest one in $V(4,q^4)$, when $q$ is an odd power of $2$ (Theorem \ref{thm:Uevasive}). As a byproduct, this construction produces a minimal $[8,4]_{q^4/q}$ rank-metric code which is also MRD and it is  the first example of an $[8,4]_{q^4/q}$ MRD code that it is not obtained as a direct sum of two $[4,2]_{q^4/q}$ MRD codes, and hence it provides a genuinely new MRD code. In addition, the parameters are also different, as  we show that the generalized rank weights of this new MRD code are larger than those of the previously known MRD codes; see Table \ref{fig} for all the comparisons. This result opens new concrete research directions, since it tells us that there are MRD codes which are better than others. However, first one has to quantify how much better one can do, finding better bounds for the generalized rank weight of a code.

\medskip
\noindent\textbf{Open Problem 1.} Find new bounds for the generalized rank weights of an $\Fmk$ code, improving on the bounds of Proposition \ref{prop:bounds_genweights}.
\medskip

Among the properties of the $q$-system $U$ and of its associated codes, an important feature that gives hope to generalize it for larger ambient spaces is its extremely compact and elementary description. Thus, it would be nice to determine if it might be a special case of a more general pattern. However, this can be done in two directions.

\medskip 
\noindent\textbf{Open Problem 2.} Generalize the construction of the $q$-system $U$ of Section \ref{sec:construction_evasive} in order to obtain more general MRD codes with higher generalized weights.

\medskip
\noindent\textbf{Open Problem 3.} Generalize the construction of the $q$-system $U$ of Section \ref{sec:construction_evasive} in order to obtain more general short minimal rank-metric codes.  
\medskip

\section*{Acknowledgements} 

This research was supported by the Italian National Group for Algebraic and Geometric Structures and their Applications (GNSAGA - INdAM).

\end{document}